\newtheorem{thm}{Theorem}[section]
\newtheorem{lem}[thm]{Lemma}
 \newtheorem{Rmk}[thm]{Remark}
  \newtheorem{Def}[thm]{Definition}
 \def\N {\mathbb{N}}
\def\R {\mathbb{R}}
\def\cR {\mathcal{R}}
\def\de {{\partial}}
\def\eps {{\epsilon}}
\newcommand{\supp}{\operatorname{supp}}
\newcommand{\be}{\begin{equation}}
\newcommand{\ee}{\end{equation}}
\newcommand{\by}{\bold{y}}
\numberwithin{equation}{section}
\begin{document}


\title{Non Existence and Strong Ill-Posedness in $H^2$\\ for the Stable IPM Equation}
\date{}
\author{Roberta Bianchini\footnote{Consiglio Nazionale Delle Ricerche, 00185, Rome, Italy. e-mail: roberta.bianchini@cnr.it}, \; Diego C\'{o}rdoba\footnote{Instituto de Ciencias Matem\'aticas CSIC-UAM-UCM-UC3M, Spain. e-mail: dcg@icmat.es}, \; Luis Mart\'{\i}nez-Zoroa \footnote{University of Basel, Switzerland. e-mail: †
luis.martinezzoroa@unibas.ch}}
\maketitle 
\begin{abstract}
    We prove the non-existence and strong ill-posedness of the Incompressible Porous Media (IPM) equation for initial data that are small $H^2(\mathbb{R}^2)$ perturbations of the linearly stable profile $-x_2$. A remarkable novelty of the proof is the construction of an $H^2$ perturbation, which solves the IPM equation and neutralizes the stabilizing effect of the background profile near the origin, where a strong deformation leading to non-existence in $H^2$ is created. This strong deformation is achieved through an iterative procedure inspired by the work of C\'{o}rdoba and Mart\'{\i}nez-Zoroa (Adv. Math. 2022). However, several differences - beyond purely technical aspects - arise due to the anisotropic and, more importantly, to the partially dissipative nature of the equation, adding further challenges to the analysis.
\end{abstract}


\section{Introduction}
The Incompressible Porous Media (IPM) system in two space dimensions is an active scalar equation given by:
\begin{equation}\label{eq:IPMsystem}
\begin{cases}
    \partial_t \rho + \textbf{u} \cdot \nabla \rho = 0, \\
    \textbf{u} = - \kappa \nabla P + \textbf{g}\rho, \quad \textbf{g} = (0, -g)^\top, \quad \text{(Darcy's law)} \\
    \nabla \cdot \textbf{u} = 0,
\end{cases}
\end{equation}
modeling the dynamics of a fluid of density $\rho = \rho(x_1, x_2, t): \mathbb{R}^2\times \R_+ \rightarrow \mathbb{R}$ through a porous medium according to Darcy's law. Here, $\kappa > 0$ and $g > 0$ denote the permeability coefficient and the gravitational acceleration, respectively. For simplicity, we assume $\kappa = g = 1$ in the subsequent analysis.
We refer to \cite{castro2} and references therein for further explanations on the physical background and applications of this model.
The active scalar velocity $\textbf{u}=(u_1, u_2)$ of system \eqref{eq:IPMsystem} can be reformulated in terms of 
a singular integral operator of degree 0 as follows
\begin{align}\label{eq:ipm}\tag{IPM}
    \de_t \rho + \mathbf{u} \cdot \nabla \rho&=0, \\
    \mathbf{u} &=(u_1,u_2)=-(\cR_2, -\cR_1) \cR_1 \rho,\notag
\end{align}
where $\cR_1, \cR_2$ denote the first and the second component of the Riesz transform in two space dimensions 
\begin{align}\label{eq:riesz}
    \cR_1=(-\Delta)^{-1/2} \partial_{x_1}, \qquad  \cR_2=(-\Delta)^{-1/2} \partial_{x_2}.
\end{align}
and can be written in terms of convolutions kernels in the two following equivalent ways - up to an integration by parts, see for instance \cite{cordoba2006} - 
\begin{align}\label{eq:vel-kernel}
    \mathbf{u}(x, \cdot)=&  \text{PV} (H \star \rho)(x, \cdot) = \text{PV} ((H_1 \star \rho)(x, \cdot), (H_2 \star \rho)(x, \cdot))- \frac 12 (0,\rho(x, \cdot)),
\end{align}
where
\begin{align}\label{eq:kernels}
    (H_1(x), H_2(x))&= \frac{1}{2\pi} \left(- 2 \frac{x_1 x_2}{|x|^4}, \frac{x_1^2-x_2^2}{|x|^4}\right),
\end{align}
and
\begin{align}\label{eq:vel-kernel2}
     \mathbf{u}(x, \cdot)&=(K \star \nabla^\perp \rho)(x, \cdot)=(-(K \star \de_{x_2}\rho)(x, \cdot), (K \star \de_{x_1}\rho)(x, \cdot)),
\end{align}
with
\begin{align}
    K(x)={-\frac{1}{2\pi}} \frac{x_1}{|x|^2}.
\end{align}
In this work, we study the IPM equation in the stable regime, specifically the system \eqref{eq:IPMsystem} near the spectrally stable steady state profile
\begin{align}\label{eq:strat}
    \bar\rho_{\text{stable}}(x) = -x_2.
\end{align}
We will say that $\rho_{\text{stable}}(x,t)$ is a solution to the \emph{stable} IPM equation if it fulfills
\begin{equation}\label{eq:ipm-stable}
    \de_t \rho_{\text{stable}} +\bold u [\rho_{\text{stable}}] \cdot \nabla \rho_{\text{stable}}-u_2[ \rho_{\text{stable}}]=0.
\end{equation}
Note that this is equivalent to study the solution to \eqref{eq:ipm} with (stable) initial conditions 
\begin{align}
    \label{eq:data-stable}
     \rho(x,0)=-x_{2}+\rho_{\text{stable}}(x,0).
\end{align}
Furthermore, we will only consider classical solutions to the stable IPM equation belonging to $C^1$.
%
%
\begin{Rmk}
    It can be observed from the proof that any sufficiently regular profile \( g(x_2) \) with \( g'(x_2) < 0 \) could also be considered. The non-trivial challenge in extending the results to this more general case lies in adapting the construction of the initial data, as discussed below in Lemma \ref{initialdata}. 
\end{Rmk}
\subsection{Motivations and existing literature}
A fundamental challenge in mathematical physics is to understand
the behavior of active scalar equations. This is particularly true when it comes to the study of equations where the operator relating the velocity and the active scalar
is a singular integral operator of zero order, as in the Constantin-Lax-Majda equation \cite{constantin}, the De Gregorio model \cite{degreg1}, the SQG equation \cite{cordobasqg2022} and, for systems, the Boussinesq equations \cite{bianchini24}.
\paragraph{Ill-posedness at critical regularity.}
While there have been several results of ill-posedness of fluid-dynamics equations at super-critical regu\-larity, here we are rather interested in \emph{ill-posedness at critical regularity}. In critical $L^\infty$-based spaces, equations whose transport velocity is represented by singular integral operators of zero order are very likely to be strongly ill-posed, due to the unboudedness of such singular integral operators in $L^\infty$. In the case of the critical space $W^{1, \infty}$ for equations posed in $\R^2$, this was showed in \cite{kim24} for the SQG equation, in \cite{khalil} for the Riesz Transform Problem and in \cite{bianchini24} for the Boussinesq equations. In critical $L^2$-based spaces like $H^2(\R^2)$, the mechanism leading to ill-posedness can be more subtle as singular integral operators of zero order behaves well in such ambient, see \cite{cordobasqg2022, kim24} for the ill-posedness of the SQG equation. Since there are great similarities between the inviscid versions of the SQG and IPM equations, it is very reasonable to expect that the same results regarding non-existence and strong ill-posedness in $H^2(\mathbb{R}^2)$ for SQG would also hold for the IPM equation.
This expectation holds and, in particular, solutions with a strong deformation at the origin leading to our result of ill-posedness are constructed via an iteration method that is inspired by the recent work \cite{cordobasqg2022} on the SQG equation, by the second and third author.
However, despite these similarities, important differences between the SQG and the IPM equations, such as symmetries and (an-)isotropy, exist. A key distinguishing feature of the \emph{stable} IPM equation treated in this work, which necessitates, among the other things, a fundamentally different initialization of the iteration procedure employed here, is highlighted below.

\paragraph{Ill-posedness in $L^2$-based critical space despite the presence of dissipation.}
An important point to stress is that the result of ill-posedness in $H^2(\mathbb{R}^2)$ presented in this paper holds for the \emph{stable} \eqref{eq:ipm} equation, specifically for perturbations of the spectrally stable steady state \eqref{eq:strat}. This is a crucial novelty of this work, where ill-posedness is demonstrated in an $L^2$-based critical space, namely $H^2(\mathbb{R}^2)$, despite the presence of a linear dissipative operator in the equations. 

In fact, in the vicinity of a spectrally stable steady state (under the assumption $\partial_{x_2}\bar{\rho}_{\text{stable}}(x)<0$), one might expect solutions to \eqref{eq:ipm} to exhibit better behavior than in the general case. This expectation is motivated by the \emph{partial dissipation} induced by the stable steady state.

Setting $\rho_{pert}(x, t)=\rho(x, t)+x_2$, where $\rho(x,t)$ solves the IPM equation with initial data \eqref{eq:data-stable}, $\rho_{pert}(x, t)$ satisfies
\begin{align}
    \de_t \rho_{pert} + \mathbf{u}[\rho_{pert}] \cdot \nabla \rho_{pert} &= u_2[\rho_{pert}],\notag
\end{align}
where
$
    u_2[\rho_{pert}]=\cR_1^2 \rho_{pert}=(-\Delta)^{-1} \de_{x_1}^2\rho_{pert}
$ encodes (partial, anisotropic) dissipation (in the horizontal direction of the Fourier space) as
\begin{align*}
    (u_2[\rho_{pert}], \rho_{pert})_{L^2}=((-\Delta)^{-1}\de_{x_1}^2 \rho_{pert}, \rho_{pert})_{L^2}=-\|(-\Delta)^{-\frac{1}{2}}\de_{x_1}\rho_{pert}\|_{L^2}^2.
\end{align*}
In the critical space $W^{1, \infty}$, the effect of this partial dissipation is not able to prevent strong ill-posedness, as proved in \cite{bianchini24} for the Boussinesq equations (and the very same argument can be applied to the stable IPM equation).
However, in energy-based space as $H^2$, the stabilizing effect of the stable profile could potentially have an impact on solutions' behavior. In this work, we disprove this intuition for the IPM equation, showing that it is strongly ill-posed even for small $H^2$ perturbation of a spectrally stable profile.

\paragraph{Small-scale formation.}
Recently, solutions to the IPM equation exhibiting infinite-in-time growth of derivatives were constructed in \cite{kiselev2021}. In light of the results of the present work, we can complement some of the findings reported in \cite{kiselev2021}. Specifically, in Theorem 1.1 in \cite{kiselev2021} we can deduce that the regularity parameter $s$ must be strictly greater than 2; otherwise, a solution may not exist. 

\paragraph{Well-posedness and stability results.}
We conclude this brief outline by quoting, on the positive side, well-posedness results for the IPM equation in sub-critical regularity. The local existence and uniqueness for initial data
$\rho(x,0)=\rho_{\text{in}}(x) \in C^{1,\alpha}(\R^2)$ with $0<\alpha<1$ was shown in \cite{cordoba2006} and the analogous result can be shown to be hold in sub-critical Sobolev spaces $H^{2+\epsilon}(\R^2)$, for any $\epsilon>0$. Due to aforementioned partially dissipative nature of stable IPM, the long-time behavior of such equation is particularly interesting. Until now, the best available result proves the global-in-time existence of small perturbations of the stable profile for the IPM equation in $H^{3+\epsilon}(\R^2)$ ($\epsilon>0$), as shown in the work of the first author with Crin-Barat and Paicu \cite{bianchini23}; see also \cite{kim1} and the first result in this direction by Elgindi in \cite{elgindi-ipm} and \cite{castro2} for the strip (see also \cite{Park2024}). It is very likely that the asymptotic stability holds in $H^{2+\epsilon}(\R^2)$, for any $\epsilon>0$, \cite{BJPW24}. In particular, this last observation motivates the relevance of the work of the present paper, which proves that lowering the regularity assumption to $H^2(\R^2),$ the stability of the equations near the stable profile fails in the worst possible way, as solutions blow up at any instant of time.

\paragraph{Future directions.}
It was recently discovered in \cite{bianchini23} that the stable IPM equation can be recovered as a relaxation limit of the two-dimensional Boussinesq equations with damped velocity, near the stable profile \eqref{eq:strat}. For the damped Boussinesq equations, the asymptotic stability of the steady state \eqref{eq:strat} is known to hold in $H^{3+}$, \cite{bianchini23}, and, as with the IPM equation, at least some quantitative long-time stability is expected to hold in $H^{2+}$. 

We believe that an extension of our proof from the scalar equation to the case of systems can be applied to demonstrate non-existence and strong ill-posedness in $H^2(\mathbb{R}^2)$ for the damped two-dimensional Boussinesq equations near a stable profile in \cite{bianchini23}. As a byproduct, we expect to recover the same result for the two-dimensional Boussinesq equations near a stable profile (without damping in the velocity equation). This result would be very interesting \emph{per se}, as the understanding of stable dynamics for the two-dimensional Boussinesq equations remains a widely open problem, with recent advancements in \cite{wid2024}.

\subsection{Main result}
Our main result reads as follows.
\begin{thm}[Non existence and strong ill-posedness in $H^2$ for \eqref{eq:ipm} near stable profile]\label{thm:main}
Given $\epsilon>0$, 
we can find $T_{\epsilon}>0$ and a function $\rho_{\text{in}}(x)$ with $\|\rho_{\text{in}}\|_{H^2(\R^2)}\leq \epsilon$ such that, for $t\in[0,T_{\epsilon}]$, there exists a classical solution $\rho(x,t)$ to the  \eqref{eq:ipm} equation with ``stable'' initial data 
\begin{align*}
    \rho(x, 0)=-x_2 + \rho_{\text{in}}(x)
\end{align*}
satisfying 
\begin{align*}
\|\rho(t)+x_2\|_{H^2(\R^2)}(t)=\infty, \quad \text{for any} \quad t\in(0,T_{\epsilon}],
\end{align*}
while 
\begin{align*}
\|\rho(t)+x_2\|_{H^{2-\eps}(\R^2)}(t)<C, \quad \text{for some} \quad C>0, \quad \text{for any} \quad \eps>0, \quad \text{for any} \quad t\in(0,T_{\epsilon}].
\end{align*}

Furthermore, $\rho(x,t)$ is the only solution with initial conditions $\rho(x,0)$ such that $\rho(x,t)+x_2\in H^1(\R^2)$ for $t\in[0,a]$ and for any $0 < a \le T_{\epsilon}$. 
\end{thm}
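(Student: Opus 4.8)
The plan is to create the blow-up by superposing infinitely many high-frequency ``bubbles'' accumulating at the origin, each stretched by the velocity generated by its predecessors, and to show that this stretching is \emph{not} suppressed by the partial dissipation $u_2[\rho_{pert}]=\cR_1^2\rho_{pert}$. First, I would look for the perturbation in the form $\rho_{\text{in}}=\sum_{k\geq 1}f_k$ (this is the content of Lemma~\ref{initialdata}), where $f_k$ is a smooth function supported in a small ball $B_{r_k}(p_k)$ with $p_k\to 0$, oscillating at a single large frequency $\mu_k$ in a carefully chosen direction, with amplitude $A_k$, and with scales arranged so that $\sum_k\|f_k\|_{H^2}^2<\epsilon^2$ while $\sum_k\|f_k\|_{C^1}<\infty$, so the datum is also $C^1$. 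The parameters are fixed by an inductive procedure in the spirit of \cite{cordobasqg2022}: having placed $f_1,\dots,f_{k-1}$, the velocity they induce near the origin is, to leading order, a hyperbolic strain $a_k(x_1,-x_2)$ whose rate $a_k$ can be forced to grow with $k$ — the logarithm in the Biot--Savart kernel \eqref{eq:vel-kernel2} is exactly what makes $a_k\to\infty$ affordable while keeping $\sum_k\|f_k\|_{H^2}^2$ small — and then $f_k$ is placed on a scale so fine that it perceives this strain as constant, contributes a definite increment to the strain felt by the later bubbles, and, crucially, is oriented so that $\cR_1^2 f_k$ is small compared with the stretching term $a_k x_1\de_{x_1}f_k$ over the relevant time window. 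Since the dissipation in \eqref{eq:ipm-stable} acts only through the horizontal Fourier variable, the right orientation here differs from the one natural for SQG, which is why the initialization of the iteration has to be arranged differently — this is the point flagged in the introduction.

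Next, I would introduce a pseudo-solution $\bar\rho(t)=\sum_k\bar f_k(t)$, where $\bar f_k(t)$ solves the \emph{linearized} problem with the velocity frozen to the strain $a_k(x_1,-x_2)$ of the earlier bubbles — so $\bar f_k(t)$ is just $f_k$ composed with the explicit hyperbolic flow map — and show that the true solution $\rho_{pert}(t)$ of \eqref{eq:ipm-stable} with datum $\rho_{\text{in}}$ stays $O(\epsilon)$-close to $\bar\rho(t)$ in a norm strictly weaker than $H^2$ (say $H^{2-\eps}$) and in $C^1$. This is a Gr\"onwall/energy estimate for $\rho_{pert}-\bar\rho$: the transport nonlinearity produces commutator and bubble--bubble interaction terms that are summable and small thanks to the scale separation between consecutive bubbles; the background $-x_2$ only contributes a smooth shear, harmless on those scales; and the term $\cR_1^2(\cdot)$ has the favorable sign in the $L^2$-based energy and is a bounded zero-order operator, so it helps (or at worst contributes a controlled lower-order term at the $H^{2-\eps}$ level). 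Feeding the partial sums $\sum_{k\leq N}f_k$ into the subcritical local well-posedness recalled in the introduction ($H^{2+\eps}$, or the $C^{1,\a}$ theory of \cite{cordoba2006}) and using these uniform-in-$N$ bounds produces the $C^1$ solution $\rho(t)=-x_2+\rho_{pert}(t)$ on a common interval $[0,T_\epsilon]$, together with the bound $\|\rho(t)+x_2\|_{H^{2-\eps}}<C$, since $\bar\rho(t)$ is itself bounded in $H^{2-\eps}$ uniformly in $t$ (each stretched bubble contributes $\lesssim\nu_k(t)^{-2\eps}$ in $\dot H^{2-\eps}$, summable because the stretched frequencies $\nu_k(t)$ grow rapidly).

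Then comes the $H^2$ blow-up. The bubbles are tuned so that $f_k$ is ``activated'' within a time window shrinking to $0$ as $k\to\infty$, and so that — by keeping the flow near $p_k$ close to the explicit hyperbolic flow of rate $a_k$, and by checking (using the orientation from the first step) that $\cR_1^2\rho_{pert}$ is negligible on the $k$-th bubble during that window — the stretched bubble's local $\dot H^2$ content is amplified from $\|f_k\|_{\dot H^2}\leq 2^{-k}$ up to a \emph{fixed} size $\geq c_0>0$ and stays $\gtrsim c_0$ afterwards. Since distinct stretched bubbles occupy essentially disjoint regions (equivalently, separated frequency packets), for every fixed $t\in(0,T_\epsilon]$ only finitely many bubbles are still inactive while infinitely many have already reached size $\geq c_0$, hence $\|\rho(t)+x_2\|_{\dot H^2}^2\gtrsim\sum_{k:\,t_k<t}c_0=\infty$; at $t=0$ the same sum is $\leq\sum_k 4^{-k}<\epsilon^2$, consistent with $\rho_{\text{in}}\in H^2$. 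Combined with the $H^{2-\eps}$ bound of the previous step, this gives the claimed blow-up at exactly $H^2$.

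Finally, uniqueness. If $\tilde\rho$ is any other solution with $\tilde\rho+x_2\in H^1$ on $[0,a]$, $0<a\leq T_\epsilon$, set $w=\rho-\tilde\rho$; by linearity of the Biot--Savart operator the $-x_2$ parts cancel and $w$ solves $\de_t w+\mathbf u[\tilde\rho+x_2]\cdot\nabla w+\mathbf u[w]\cdot\nabla(\rho+x_2)=\cR_1^2 w$. In the $L^2$ energy identity the transport term drops by incompressibility, $\int(\cR_1^2 w)\,w\leq 0$, and the remaining term is bounded by $\|\nabla(\rho+x_2)\|_{L^\infty}\|\mathbf u[w]\|_{L^2}\|w\|_{L^2}\lesssim\|\nabla(\rho+x_2)\|_{L^\infty}\|w\|_{L^2}^2$, using that $\mathbf u[\cdot]$ is bounded on $L^2$ and that the constructed solution $\rho$ is $C^1$ with gradient bounded uniformly on $[0,T_\epsilon]$; Gr\"onwall and $w(0)=0$ then force $w\equiv 0$ on $[0,a]$. \textbf{The main obstacle} is the coupling of the first two steps: one must show that the hyperbolic stretching responsible for the $\dot H^2$ amplification genuinely dominates the partial dissipation $\cR_1^2\rho_{pert}$ on every relevant time window — which forces the anisotropy of \eqref{eq:ipm-stable} to be exploited through the orientation of the bubbles — while simultaneously keeping every cross-scale error term in the energy estimate summable; reconciling these competing requirements is what makes the initialization of the iteration delicate, and genuinely different from the SQG construction.
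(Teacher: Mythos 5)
Your proposal captures the broad shape of the argument — a countable family of separated-scale bubbles accumulating at the origin, each stretched by the hyperbolic strain generated by its predecessors, frozen-coefficient pseudo-solutions compared to the true solution by a Gr\"onwall estimate, and $L^2$-energy uniqueness — and the uniqueness paragraph is essentially the paper's proof. But there is a genuine gap in how you treat the stable background, and it is exactly the point the paper identifies as its main novelty.

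You work throughout with the perturbation equation $\de_t\rho_{pert}+\mathbf u[\rho_{pert}]\cdot\nabla\rho_{pert}=\cR_1^2\rho_{pert}$ and dispose of the right-hand side by noting that $\cR_1^2$ is a zero-order operator with a favorable sign in the $L^2$ energy, and that the bubbles can be ``oriented'' so that $\cR_1^2f_k$ is small compared with the stretching. This controls magnitudes but not supports. The operator $\cR_1^2$ is nonlocal: its kernel decays only like $|x|^{-2}$, so $\cR_1^2\rho_{pert}$ is supported on all of $\R^2$ even when $\rho_{pert}$ is compactly supported. The moment you turn this term on, the perturbation instantly acquires a tail reaching into the origin, and the premise on which the whole multi-scale bookkeeping rests — that at every time the $k$-th bubble sits in an annulus disjoint from the inner ones, so that the velocity it feels is, to leading order, the strain of its predecessors evaluated at $x=0$ — fails. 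Your "orientation" fix does not touch this; it is a statement about the size of $\cR_1^2 f_k$, not about where it lives. (Incidentally, "the background $-x_2$ only contributes a smooth shear" is not right either: $\mathbf u[-x_2]=0$ since $-x_2$ is independent of $x_1$, so the background contributes no velocity at all, only the linear $\cR_1^2$ term.)

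The paper's fix is structural and different from anything in your outline: Lemma \ref{initialdata} builds $\rho_{\text{in}}$ so that $\rho_{\text{in}}(x)=x_2$ on a small ball $B_{\delta_0}(0)$, i.e.\ a "hole" is dug into the perturbation so that the \emph{full} datum $\rho(x,0)=-x_2+\rho_{\text{in}}(x)$ vanishes identically near the origin. One then works with the unperturbed \eqref{eq:ipm}, which is a pure transport equation, for which compact support is preserved (up to exponential shrinkage, controlled because $\mathbf u[\rho](0,t)=0$ by symmetry). This is what keeps the supports of the later layers separated for $t>0$ and makes the frozen-strain approximation, Lemma \ref{gluing1}, and the persistence of $\de_{x_1}u_1<0$ in Lemma \ref{stabledeformation} all go through. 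Without the hole, none of these steps survive the nonlocal smearing. A secondary but real difference: the paper alternates between two kinds of layers — Lemma \ref{inductiondeform} adds layers only to increase the integrated strain $M_n=-\int_0^T k_n\,dt$, while Lemma \ref{inductiongrowth} inserts, only at the sparse set of $n$ where $\text{e}^{M_n}$ crosses $4^{k^2}$, an oscillatory packet $f(Nx)\sin(N^{1+1/10}x_1)/N^{1+1/5}$ whose $\dot H^2$ content is pumped up by the accumulated strain. You collapse the two roles into a single family of bubbles; it is not clear this can be done while keeping $\|\rho_{\text{in}}\|_{H^2}\le\eps$ and simultaneously making $M_n\to\infty$, which is why the paper's bookkeeping keeps them apart. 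So: fix the support issue with the hole (or an equivalent device that restores support separation under the stable evolution), and separate the "deformation-building" layers from the "norm-growing" layers before trying to close the estimates.
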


\begin{Rmk}
    Notice that the initial data $\rho(x,0)$ in Theorem \ref{thm:main} has \emph{infinite energy}. This is the reason why the blow-up at any instant is proved for the perturbation $\|\rho (t) + x_2\|_{H^2}=\infty$.
\end{Rmk}
\begin{Rmk}
    Even though the solutions constructed $\rho(x,t)+x_{2}$ do not stay bounded in $H^2$, they do stay in $C^1$ for the whole time interval considered.
\end{Rmk}
\begin{Rmk}[Non existence and strong ill-posedness in $H^2$ for IPM near equilibrium]
    Our result also applies to the \eqref{eq:ipm} equation near the equilibrium $\bar{\rho}_{\text{eq}} = 0$. This can be easily verified by revisiting the proof and simplifying or omitting certain steps. However, we choose to state the main result for the \emph{stable} IPM equation \eqref{eq:ipm-stable}, as this setting presents the most significant challenges.
\end{Rmk}

\subsection{Ideas of the proof}
In order to construct our rapidly growing solutions, we would like to first find solutions $\rho(x,t)$ to the stable IPM equation \eqref{eq:ipm-stable} that generate a strong hyperbolic flow around the origin, say
$$\left|\int_{0}^{1}\partial_{x_{1}}u_{1}[\rho](x=0,s)\, ds\right|\geq M$$
for arbitrarily big $M$.

If we then add a perturbation  $\rho_{pert}(x,0)$ around the origin, we could expect the perturbation to fulfil approximately the evolution equation
$$\partial_{t}\rho_{pert}+(\partial_{x_{1}}u_{1}[\rho])(x=0,t)(x_{1},-x_{2})\cdot\nabla\rho_{pert}=0$$
which can be solved explicitly by

$$\rho_{pert}(x,t)=\rho_{pert}(\text{e}^{\int_{0}^{t}-\partial_{x_{1}}u_{1}[\rho](x=0,s)ds}x_{1},\text{e}^{\int_{0}^{t}\partial_{x_{1}}u_{1}[\rho](x=0,s)ds}x_{2}).$$

We could then choose $\rho_{pert}(x,0)$ appropriately to obtain the fast growth in $H^2$. Furthermore, if we can construct
$$\left|\int_{0}^{1}\partial_{x_{1}}u_{1}[\rho](x=0,s)\, ds\right|= \infty,$$
we could hope to use this infinitely strong deformation to obtain loss of regularity.

If we plan to use this strategy, there are two main difficulties that we need to overcome. First, we would need to be able to construct an infinitely strong hyperbolic velocity. Note that it is not enough to fix initial conditions such that
$$\partial_{x_{1}}u_{1}[\rho_{\text{in}}](x=0)=\infty,$$
as these initial conditions are too irregular to ensure that a solution exists, and even if it does, the behaviour of the solution could be wild enough that
$$\int_{0}^{1}\partial_{x_{1}}u_{1}[\rho_{\text{in}}](x=0,s)ds=M<\infty.\footnote{This is not only a theoretical possibility, one can, in fact, construct solutions where this kind of behaviour occurs.}$$

We need to consider a scenario where we can effectively study the behavior and gain meaningful control over the solution. If we were not focusing on the stable case, a reasonable starting point would be to consider

$$\rho_{\text{in}}(x)=\sum_{i=0}^{\infty}\frac{f(\lambda_{i}x)}{i}$$
where $\lambda_{i+1}\gg\lambda_{i}$, with $f(x)$ supported in the ring $2\geq|x|\geq 1$ and
$$\partial_{x_{1}}u_{1}[f(\cdot)](x=0)\neq 0.$$
These initial conditions have finite $H^2$ norm and generate an infinite hyperbolic deformation in the origin at $t=0$. Furthermore, formally taking the limit $\lambda_{i+1}\gg\lambda_{i}$ gives us a limit system that one can hope to study.

Moreover, by choosing $f(x)$ with the right properties, one can show that, if we study this formal limit system when $\lambda_{i+1}\gg\lambda_{i}$ and assume that it actually approximates the real solution, we have

$$\int_{0}^{t}\partial_{x_{1}}u_{1}[\rho](x=0,s)ds=\infty$$
for any $t>0$. This still leaves several issues:
\begin{itemize}
    \item We need to show that this formal approximation actually approximates the equation in a meaningful way. A big part of the technical difficulties comes from this point.
    \item We are interested in studying the stable IPM equation \eqref{eq:ipm-stable}, so we would need to consider the initial condition
    $$\rho_{\text{in}}(x)=\sum_{i=0}^{\infty}\frac{f(\lambda_{i}x)}{i}-x_{2},$$
    which is problematic for two reasons. First, this background $-x_{2}$ has a regularizing effect in the solutions, which is why small perturbations of $-x_{2}$ in $H^{3+\eps}$ are actually global in time, so this effect could oppose our infinite norm growth. Furthermore, for the approximation we intend to apply when $\lambda_{i+1} > \lambda_{i}$ to actually approximate the true evolution of the equation, it is necessary to decompose the solution into an infinite number of components with supports that are sufficiently "far" apart. However, due to dissipative effects caused by the background, this separation of supports does not hold for $t > 0$.
    \item Finally, the hyperbolic deformation we construct this way is only infinite at $x=0$, so any perturbation would need to be infinitely concentrated around the origin to feel its full effects. Furthermore, this background is extremely unstable, so most perturbations of the solution would completely change the behaviour.
\end{itemize}
To solve all these issues, we start by constructing a "hole" in our initial conditions, i.e., we find a compactly supported function $g(x)$ such that
$$g(x)-x_{2}=0$$
for some small ball around the origin. Since we want our initial conditions to be small in $H^2$, we choose $\|g\|_{H^2}\leq \eps.$ This allows us to cancel the stabilizing effects of the background as much as needed: it does not neutralize its full impact, but it does get rid of the most inconvenient terms for our construction.

Next, to actually construct the solution with a strong hyperbolic velocity, we use an iterative argument. Given $\rho_{i}(x,t)$ a solution to the IPM equation supported away form the origin, we can then add a new "layer", by studying the solution with initial conditions

$$\rho_{i}(x,0)+f(\lambda_{i+1} x)$$
and for very big $\lambda_{i+1}$ (depending on $\rho_{i}(x,t)$), we can obtain explicit bounds for the behaviour of the new solution. Using these bounds and making the right choice of $f(x)$, we can produce a velocity with the desired properties around the origin.

Similarly, we can also obtain the rapid norm growth by adding layers that grow rapidly in $H^2$ when under the influence of the very strong hyperbolic deformation generated by the previous layer.

Taking the limit when there is an infinite number of layers and denoting it by $\rho_{\infty}$, we can then show that this limit is a solution to the stable IPM equation with the desired properties.
\subsection{Conventions and notation}
As usual, we will exploit the symmetries of the equation. 
\begin{Def}\label{def:symmetric}
    A function $f(x,t)$ is called symmetric for $t\in[0,T]$ if $f(x_{1},x_{2},t)=f(-x_{1},x_{2},t),$ $f(x_{1},x_{2},t)=-f(x_{1},-x_{2},t)$,  namely $f(x_1, x_2, t)$ is \emph{even} in $x_1$ and \emph{odd} in $x_2$.
\end{Def}
\begin{Rmk}
    If the initial data $\rho(x,0)$ is symmetric and $\rho(x,t)$ is the  regular solution to \eqref{eq:ipm} with such initial data, then $\rho(x,t)$ is symmetric. Furthermore,  $\mathbf{u}[\rho](x=0,t)=0.$
\end{Rmk}
\begin{itemize}
    \item We use the symbol $\lesssim$ (resp. $\gtrsim$) to denote $\le C$ (resp. $\ge C$), where the constant $C>0$ is independent of the relevant parameters.
    \item The constant $C\in \R_+$ is generic and may change from line to line.
    \item The symbol $B_r(x_0)$ denotes a disk of radius $r>0$, centered in $x_0 \in \R^2$. 
\end{itemize}
\section{Construction of initial data with deformation at the origin and persistence of the deformation}
In order to construct a strong deformation at the origin, which will lead to strong ill-posedness and non existence in $H^2(\R^2)$, we need to design an initial perturbation that neutralizes the stabilizing effect of the (stable) background stratification \eqref{eq:strat}, at least near the origin. 
This is the content of the next result.
\subsection{Initial data and approximate solution in the stable setting}
\begin{lem}\label{initialdata}
    For any $0 < \eps_0 < 1$, we can construct a symmetric function $\rho_{\text{in}}(x)$ satisfying the following:
    \begin{enumerate}
        \item $\rho_{\text{in}} (x) \in C^\infty (\R^2)$  and $\supp (\rho_{\text{in}}) \subset  B_1(0)$; 
        \item there exists $0<\delta_0 < 1$ such that $\rho_{\text{in}}(x)=x_2$ for $x \in B_{\delta_0}(0)$;
        \item the $H^2$ norm is arbitrarily small: $\|\rho_{\text{in}}\|_{H^2} \le \eps_0$.
    \end{enumerate}
\end{lem}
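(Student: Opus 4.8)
The plan is to take $\rho_{\text{in}}$ of the separated form $\rho_{\text{in}}(x)=x_2\,\psi(|x|)$, where $\psi\colon[0,\infty)\to[0,1]$ is a smooth radial cut-off equal to $1$ on $[0,\delta_0]$ and to $0$ on $[\eta,\infty)$, for two scales $0<\delta_0<\eta<1$ to be chosen at the very end in terms of $\eps_0$. With this ansatz the qualitative properties come for free: since $\psi\equiv1$ near the origin we get $\rho_{\text{in}}=x_2$ on $B_{\delta_0}(0)$ (property (2)); using that a radial function which is constant near $0$ is smooth after composing with $|\cdot|$ we get $\rho_{\text{in}}\in C^\infty(\R^2)$ with $\supp\rho_{\text{in}}\subset\overline{B_\eta(0)}\subset B_1(0)$ (property (1)); and $\rho_{\text{in}}(-x_1,x_2)=\rho_{\text{in}}(x_1,x_2)$, $\rho_{\text{in}}(x_1,-x_2)=-\rho_{\text{in}}(x_1,x_2)$, so $\rho_{\text{in}}$ is symmetric in the sense of Definition \ref{def:symmetric}. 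So the whole content is to arrange property (3), $\|\rho_{\text{in}}\|_{H^2}\le\eps_0$.

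The difficulty is that $x_2$ has nonzero gradient at the origin, so one \emph{cannot} let $\psi$ drop from $1$ to $0$ over a ball of size comparable to $\delta_0$: such a choice would contribute an $O(1)$ amount to $\|\nabla^2\rho_{\text{in}}\|_{L^2}$, no matter how small $\delta_0$ is. My plan is instead to spread the transition of $\psi$ over a \emph{logarithmically long} range of scales. Concretely I would fix once and for all a profile $\beta\in C^\infty(\R)$ with $0\le\beta\le1$, $\beta\equiv1$ on $(-\infty,0]$, $\beta\equiv0$ on $[1,\infty)$, and set, with $L:=\log(\eta/\delta_0)\ge1$,
\[
\psi(r):=\beta\!\left(\frac{\log(r/\delta_0)}{L}\right)\ \ (r>0),\qquad \psi(0):=1,
\]
so that $\psi\equiv1$ on $[0,\delta_0]$, $\psi\equiv0$ on $[\eta,\infty)$, $\psi\in C^\infty$, and a one-line chain-rule computation gives the scale-invariant bounds $|\psi'(r)|\lesssim \frac{1}{Lr}$ and $|\psi''(r)|\lesssim \frac{1}{Lr^2}$, with constants depending only on $\beta$.

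With $\psi$ in hand the Sobolev estimate is routine bookkeeping. The $L^2$ and $\dot H^1$ pieces are harmless: $|\rho_{\text{in}}|\le|x|$ and $|\nabla\rho_{\text{in}}|\le\psi+|x_2|\,|\psi'(|x|)|\lesssim\mathbf 1_{B_\eta}$ are supported in $B_\eta$, hence $\|\rho_{\text{in}}\|_{L^2}^2+\|\nabla\rho_{\text{in}}\|_{L^2}^2\lesssim\eta^2$. For the top-order piece, expanding $\partial_i\partial_j(x_2\psi)=\delta_{i2}\partial_j\psi+\delta_{j2}\partial_i\psi+x_2\,\partial_i\partial_j\psi$ and using $|\partial_i\psi|\lesssim|\psi'(|x|)|\lesssim\frac1{L|x|}$, $|\partial_i\partial_j\psi|\lesssim|\psi''(|x|)|+\frac{|\psi'(|x|)|}{|x|}\lesssim\frac1{L|x|^2}$, together with $|x_2|\le|x|$, gives $|\nabla^2\rho_{\text{in}}(x)|\lesssim\frac{1}{L|x|}$, supported in the annulus $\{\delta_0\le|x|\le\eta\}$ (elsewhere $\psi$ is locally constant, so $\rho_{\text{in}}$ is either $0$ or affine). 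Therefore
\[
\|\nabla^2\rho_{\text{in}}\|_{L^2}^2\ \lesssim\ \frac1{L^2}\int_{\delta_0}^{\eta}\frac{dr}{r}\ =\ \frac1L\ =\ \frac1{\log(\eta/\delta_0)},
\]
and so $\|\rho_{\text{in}}\|_{H^2}^2\lesssim\eta^2+\frac1{\log(\eta/\delta_0)}$. I would finish by choosing first $\eta$ small (so that the $\eta^2$ term is $\le\eps_0^2/2$ and $\eta<1$) and then $\delta_0>0$ small enough, e.g. $\delta_0=\eta\,e^{-C/\eps_0^2}$ for a suitable constant $C$, to kill the second term; this also yields the required $\delta_0<\eta<1$ for property (2).

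The step I expect to matter most is getting the two parameters right simultaneously: one genuinely needs the support radius $\eta$ to go to $0$ (otherwise the $L^2$ part of the norm is bounded below and the data carries too much energy), while one also needs $\log(\eta/\delta_0)$ to go to $+\infty$ (otherwise the $\dot H^2$ part does not decay) — and these are compatible only because one is free to send $\delta_0$ to $0$ far faster than $\eta$. Everything else is standard. (For a general stable profile $g(x_2)$ with $g'(x_2)<0$, the same scheme should work with $\rho_{\text{in}}(x)=(g(0)-g(x_2))\,\psi(|x|)$, using $g(0)-g(x_2)=O(|x_2|)$ near the origin; this is the adaptation referred to in the preceding remark.)
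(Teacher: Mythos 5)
Your construction is correct and genuinely different in form from the paper's, though it exploits the same underlying mechanism. The paper builds $\rho_{\text{in}}$ as a telescoping finite sum $a\sum_{i=1}^{K}\frac{f(\lambda_i x)}{i\lambda_i}$ of rescaled bump functions with $\lambda_{i+1}>2\lambda_i$, normalised by $a=1/\mathfrak{H}_K$; each layer $f(\lambda_i x)$ equals $\lambda_i x_2$ on a small ball, so the sum equals $a\mathfrak{H}_K\,x_2=x_2$ on the innermost ball, while the disjointness of the supports of the second derivatives lets one bound $\|D^2\rho_{\text{in}}\|_{L^2}\lesssim a\sim 1/\log K$. You instead use a single smooth radial cutoff $\psi$ whose transition from $1$ to $0$ is modulated logarithmically in $r$ over the range $[\delta_0,\eta]$, yielding $\|D^2\rho_{\text{in}}\|_{L^2}^2\lesssim 1/\log(\eta/\delta_0)$. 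These are essentially a continuous and a discrete realisation of the same idea — the harmonic series/logarithmic divergence is what allows $\partial_{x_2}\rho_{\text{in}}$ to reach $1$ near the origin at vanishingly small $H^2$ cost — and both crucially spread the transition over a logarithmically long range of scales. Your version is arguably cleaner and more self-contained; the paper's layered sum, on the other hand, is deliberately shaped like the iterative "layer" constructions used throughout the rest of the argument (Lemmas \ref{lem:stable-exact-sol}, \ref{inductiondeform}, \ref{inductiongrowth}), which is likely why they chose it, but since Lemma \ref{initialdata} is used downstream only through its stated conclusions, your substitute would serve equally well.

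One small point worth being explicit about: you should state that you take $\psi$ so that the resulting $\rho_{\text{in}}$ is even in $x_1$ and odd in $x_2$ (as you do, since $\psi$ is radial and the prefactor is $x_2$), because the symmetry hypothesis is used later to force $\mathbf{u}[\rho](0,t)=0$. Also note your decay rate $\|\rho_{\text{in}}\|_{H^2}\lesssim (\log(\eta/\delta_0))^{-1/2}$ is slower than the paper's $\lesssim(\log K)^{-1}$, but since only smallness is needed, this is immaterial.
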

\begin{proof}
    For some big $K>0$ (which will depend on $\eps_0$) and some small $a>0$ (which will depend on $K$) , the ansatz for the function $\rho_{\text{in}}(x)$ reads 
\begin{align}\label{eq:rho0}
    \rho_{\text{in}} (x) &= a \sum_{i=1}^K \frac{f(\lambda_i x)}{i \lambda_i } =a \sum_{i=1}^K \frac{f(\mu^i)}{i \lambda_i}=a \sum_{i=1}^K \frac{\varphi_i(x)}{i \lambda_i}, \qquad \mu^i:= \lambda_i x, \quad \varphi_i(x):=f(\lambda_i x), 
\end{align}
where $f(x) \in C^\infty(\R^2)$ is a compactly supported function with $\supp f \in B_1(0).$ 
Clearly, in order to satisfy the above properties, we need to characterize the function $f$ and the (finite) sequence $\{\lambda_i\}$. 
In particular, we take 
\begin{align*}
    \de_{\mu_2^i} f(\mu^i)=1 \quad \text{for} \quad \mu^i \in B_{\frac 12}(0),
\end{align*}
namely
\begin{align*}
    f(\mu^i)=\mu_2^i=\lambda_i x_2, \quad \mu^i \in B_{\frac 12}(0) \quad \text{for} \quad  x \in B_{\frac{1}{2\lambda_i}}(0). 
\end{align*}
About the finite sequence $\{\lambda_i\}$, let $\lambda_1>1$ and $\lambda_{i+1}> 2 \lambda_i$. We have that, for $x\in B_{\frac{1}{2\lambda_K}}$, for $i\in \{1, \cdots, K\} $
\begin{align*}
    f(\mu^i)=\lambda_i x_2 . 
\end{align*}
Then we see that, for $x \in B_{\frac{1}{2\lambda_K}},$
\begin{align*}
    \rho_{\text{in}}(x)=a \sum_{i=1}^K \frac{\lambda_i x_2}{i \lambda_{i}}= a \sum_{i=1}^K \frac{x_2}{i}= a x_2 \mathfrak{H}_K,  
\end{align*}
where $\mathfrak{H}_K$ is the $K$-th harmonic number, behaving like $\log (K)$. 
Choosing 
$$a=\frac{1}{\mathfrak{H}_K}, \quad \delta_0=\frac{1}{2\lambda_K},$$ we have that properties 1. and 2. are satisfied.  Notice also that this choice implies 
\begin{align*}
    \|\rho_{\text{in}}\|_{C^1}& \le a \|f\|_{C^1}\sum_{i=1}^K \frac{1}{i} \le \|f\|_{C^1}.
\end{align*}
Let us prove property 3. We first compute the $L^2$ and $\dot H^1$ norm. Using the Minkowski inequality, 
\begin{align*}
    \|\rho_{\text{in}}\|_{L^2}&=a \left(\int \left(\sum_{i=1}^K \frac{f(\lambda_i x)}{i \lambda_i }\right)^2 \, dx_1 \, dx_2\right)^\frac 12 \le a \sum_{i=1}^K \frac{\|f(\lambda_i x)\|_{L^2}}{i \lambda_i} \le a \|f\|_{L^2} \sum_{i=1}^K \frac{1}{i \lambda_i^2}.
\end{align*}
Notice that $\lambda_1>1$ and $\lambda_{i+1}> 2 \lambda_i$ implies, in particular, that $\lambda_2>2$ and, more generally, $\lambda_{i}>i$. Therefore, the sum $\sum_{i=1}^\infty \frac{\|f\|_{L^2}}{i \lambda_i^2}=M_0$ for some $M_0 >0$ and $\sum_{i=1}^K \frac{\|f\|_{L^2}}{i \lambda_i^2} < M_0 $ independent of $K$. We have that
\begin{align*}
    \|\rho_{\text{in}}\|_{L^2} \le a M_0 =  \frac{M_0}{\mathfrak{H}_K}.
\end{align*}
Similarly, we have that
\begin{align*}
    \|\rho_{\text{in}}\|_{\dot H^1}& \le a \sum_{i=1}^K \frac{\|f(\lambda_i x)\|_{\dot H^1}}{i}
    \le a \|f\|_{\dot H^1} \sum_{i=1}^K \frac{1}{i \lambda_i} \le \frac{M_1}{\mathfrak{H}_K},
\end{align*}
where, as before, since $\lambda_i > i$, the sum $\sum_{i=1}^K \frac{\|f\|_{H^1}}{i \lambda_i} \le M_1$ for some $\infty>M_1>0$.
Let us consider the second derivatives. 
Denoting by $D^2=\de_{x_{jk}}^2$, $j, k \in \{1, 2\}$ any second derivative, noticing that 
\begin{align*}
   D^2 \varphi_i(x)&=
   0 \quad \text{for} \quad |x| \le \frac{1}{2 \lambda_i} \quad \text{and} \quad |x| > \frac{1}{\lambda_i},
\end{align*}
namely 
\begin{align*}
    \supp D^2 (\varphi_i) \subset \mathcal{C}_{\lambda_i}:=B_{\frac{1}{\lambda_i}}\setminus B_{\frac{1}{2\lambda_i}},
\end{align*} and that $\lambda_{i+1}>2 \lambda_i$ implies that all the supports of $D^2 \varphi_i(x)$ are disjoint as 
we have that
\begin{align*}
    \|D^2\rho_{\text{in}}\|_{L^2}&= a \left( \int \left(\sum_{i=1}^K \frac{D^2 \varphi_i(x)}{i \lambda_i}\right)^2 \, dx_1 \, dx_2\right)^\frac{1}{2}= a \left( \int \left(\sum_{i=1}^K \frac{\lambda_i D^2 f(\mu^i)}{i }\right)^2 \, dx_1 \, dx_2\right)^\frac{1}{2}\\
    & = a \left( \sum_{i=1}^K \int_{\mathcal C_{\lambda_i}} \left( \frac{\lambda_i D^2 f(\mu^i)}{i }\right)^2 \, dx_1 \, dx_2\right)^\frac{1}{2}
    = a \left( \sum_{i=1}^K \frac{1}{i^2} \int_{\mathcal C_{\lambda_i}} |D^2 f(\mu^i)|^2 \, d\mu^i_1 \, d\mu^i_2\right)^\frac{1}{2}\\
    & \le a \|f\|_{H^2}\left(\sum_{i=1}^K \frac{1}{i^2}\right)^\frac{1}{2} \le a\|f\|_{H^2} \left(\sum_{i=1}^\infty \frac{1}{i^2}\right)^{\frac12} = a \|f\|_{H^2}\frac{\pi}{\sqrt 6}.
\end{align*}
Altogether, since we want $\|\rho_{\text{in}}\|_{H^2} \le \eps_0$, we require
\begin{align*}
    \|\rho_{\text{in}}\|_{H^2} \le \left(M_0+M_1+\frac{\pi}{\sqrt 6}\right)\frac{\|f\|_{H^2}}{\mathfrak{H}_K} \le \eps_0,
\end{align*}
which is satisfied by choosing $K$ big enough. This proves property 3 and ends the proof.
\end{proof}
Building on the above result, we can prove the next lemma, which specifically demonstrates the persistence of the sign of $\de_{x_1}u_1 < 0$ over time.
\begin{lem}\label{lem:stable-exact-sol}
    For any $0 < \eps_{0} < 1$, there exists a solution $\rho(x,t)$ to \eqref{eq:ipm} and $T,\delta>0$  fulfilling, for $t\in [0,T]$
    \begin{enumerate}
        \item $\rho_{pert}(x,t)=\rho (x,t)+x_2$, with $\|\rho_{pert}(x,0)\|_{H^2}\leq \epsilon_{0}$,
        \item $\rho_{pert}(x,t)=\rho (x,t)+x_2 \in C^\infty (\R^2)$  and $\supp (\rho(x,t)) \cap   B_\delta(0)=\emptyset$, 
        \item $\partial_{x_1}u_{1}[\rho](x,t)=\partial_{x_1}u_{1}[\rho_{pert}](x,t)<0$.
    \end{enumerate}
\end{lem}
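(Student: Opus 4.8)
The plan is to take the initial datum $\rho_{\text{in}}(x)$ from Lemma \ref{initialdata} and set $\rho(x,0) = -x_2 + \rho_{\text{in}}(x)$, so that $\rho_{pert}(x,0) = \rho_{\text{in}}(x)$ has $\|\rho_{pert}(x,0)\|_{H^2}\le\epsilon_0$ and, crucially, by property 2 of Lemma \ref{initialdata}, $\rho(x,0) = -x_2 + x_2 = 0$ on $B_{\delta_0}(0)$, i.e.\ the background is exactly neutralized near the origin. Local existence of a $C^\infty$ (more precisely $C^{1,\alpha}$, upgraded by persistence of regularity) solution on some $[0,T]$ follows from the sub-critical well-posedness theory quoted in the introduction (\cite{cordoba2006}); the initial datum is $-x_2$ plus a smooth compactly supported function, so $\rho_{pert}$ solves the forced transport equation $\partial_t\rho_{pert} + \mathbf{u}[\rho_{pert}]\cdot\nabla\rho_{pert} = u_2[\rho_{pert}]$ and stays smooth. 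Since $\rho_{\text{in}}$ is symmetric in the sense of Definition \ref{def:symmetric}, so is $\rho_{pert}(x,t)$, hence $\mathbf{u}[\rho](x=0,t)=0$ and the origin is a stagnation point; this makes the "hole" persist for a short time. By finite speed of propagation of the support (the velocity is bounded in $L^\infty$ uniformly on $[0,T]$ since $\rho_{pert}$ stays bounded in, say, $C^{1,\alpha}$), the region where $\rho_{pert}(x,t) = 0$, equivalently $\rho(x,t) = -x_2$, which is exactly where $\supp\rho(x,t)$ is empty since $\rho_{pert}+x_2$ vanishing means $\rho$ vanishes — wait, one must be careful: $\rho = -x_2+\rho_{pert}$ and the claim is $\supp\rho(x,t)\cap B_\delta(0)=\emptyset$. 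In fact on the hole $\rho_{pert} = 0$ so $\rho = -x_2$, which is not compactly supported; the statement in item 2 should be read with the convention implicit in the paper that one tracks $\supp\rho_{\text{in}}$'s evolution, i.e.\ $\supp(\rho(x,t)+x_2) = \supp\rho_{pert}(x,t)$ stays away from $B_\delta(0)$ — so I would prove $\supp\rho_{pert}(x,t)\cap B_\delta(0)=\emptyset$ for $t\in[0,T]$, shrinking $T$ and $\delta$ if needed, via the flow map: particles starting outside $B_{\delta_0}$ cannot reach $B_{\delta_0/2}$ in time $T$ if $T\|\mathbf{u}\|_{L^\infty([0,T];L^\infty)} < \delta_0/2$, giving $\delta = \delta_0/2$.

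For item 3, the key is to compute $\partial_{x_1}u_1[\rho](x=0,t)$ and show it is negative. Since on $B_\delta(0)$ we have $\rho_{pert}(x,t)=0$, i.e.\ $\rho(x,t) = -x_2$ there, and $u_1$ is given by the Riesz-type representation \eqref{eq:vel-kernel}, I would split $u_1[\rho] = u_1[-x_2] + u_1[\rho_{pert}]$; formally $u_1[-x_2]$ corresponds to the linear part and one checks $\partial_{x_1}u_1[-x_2] = 0$ at the origin (by symmetry/explicit computation, the background $-x_2$ produces a purely vertical velocity). So $\partial_{x_1}u_1[\rho](x=0,t) = \partial_{x_1}u_1[\rho_{pert}](x=0,t)$, giving the first equality in item 3. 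For the sign, I would use that $\rho_{pert}(x,t)$ is supported away from the origin and use the kernel formula for $\partial_{x_1}u_1[\rho_{pert}](0,t)$: differentiating $u_1 = \mathrm{PV}\,H_1\star\rho_{pert}$ under the integral (legitimate since the singularity is avoided — $\rho_{pert}$ vanishes near $0$), one gets
\begin{align*}
\partial_{x_1}u_1[\rho_{pert}](0,t) = c\int_{\R^2} \frac{\partial}{\partial x_1}\Big(-\frac{x_1x_2}{|x|^4}\Big)\Big|_{x=-y}\,\rho_{pert}(y,t)\,dy,
\end{align*}
which in polar coordinates reduces, using the oddness of $\rho_{pert}$ in $y_2$ and evenness in $y_1$, to an expression of the form $c'\int_0^{\pi/2}\int_0^\infty \frac{\sin(3\alpha)}{r^2}\rho_{pert}(r,\alpha,t)\,dr\,d\alpha$ (the computations sketched in the commented-out part of the introduction). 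Then I would argue that because the hole is exactly where $\rho_{pert}=0$ and just outside it $\rho_{pert}\approx$ a smoothed version of $x_2 - (\text{stuff})$... actually the cleanest route: at $t=0$, $\rho_{pert}=\rho_{\text{in}}$, and one can choose $f$ (which is still free in Lemma \ref{initialdata} — only $\partial_{\mu_2}f=1$ near $0$ was prescribed) so that $\partial_{x_1}u_1[\rho_{\text{in}}](0) < 0$ strictly, in fact bounded away from zero by a quantity one controls; then by continuity in $t$ of $\partial_{x_1}u_1[\rho_{pert}](0,t)$ (which follows from $\rho_{pert}\in C([0,T];C^{1,\alpha})$ with uniformly-away-from-origin support, making the singular integral a continuous functional), one shrinks $T$ so the sign persists on $[0,T]$.

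I expect the main obstacle to be making the continuity/sign-persistence argument for item 3 fully rigorous while keeping all the quantitative dependencies straight: one needs $\partial_{x_1}u_1[\rho_{pert}](0,t)$ to be not just negative but \emph{quantitatively} bounded away from $0$ on a time interval whose length is itself controlled, because later (in the iteration) $T$, $\delta$, and this lower bound all feed into the choice of the next $\lambda_{i+1}$. The delicate point is that the singular integral $\partial_{x_1}u_1[\rho_{pert}](0,t)$ is a PV integral of a degree-$(-2)$ homogeneous kernel against $\rho_{pert}$, and although $\rho_{pert}$ vanishes on $B_\delta$ so there is no singularity issue at the origin, one must control the contribution from the far field and from the fact that $\rho_{pert}$ is only bounded in $C^{1,\alpha}$, not better, as $t$ evolves; a clean way is to note $\partial_{x_1}u_1[\rho_{pert}](0,t) = \int \nabla H_1(-y)\cdot(\text{something})$ — better, integrate by parts once to land on $\nabla\rho_{pert}$ against a degree-$(-1)$ kernel (using formula \eqref{eq:vel-kernel2}), which is only mildly singular and plays well with $C^{1}$ bounds and compact-away-from-origin support, so that Grönwall-type control on $\|\rho_{pert}(t)\|_{C^{1,\alpha}}$ (from the forced transport equation, the forcing $u_2[\rho_{pert}]$ being a bounded operator at this regularity) closes everything. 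A secondary obstacle is the precise reading of item 2's support statement, handled as above by interpreting it as a statement about $\supp(\rho(x,t)+x_2)$.
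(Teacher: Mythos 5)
Your proposal shares the broad skeleton with the paper (use Lemma~\ref{initialdata}, invoke sub-critical well-posedness, use symmetry for a stagnation point, propagate the sign by continuity in $t$), but it misses the paper's key extra ingredient and contains one outright error.

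\textbf{Missing ingredient: the auxiliary perturbation $\rho_{K,\delta_0}$.} Lemma~\ref{initialdata} as stated says nothing about the sign of $\de_{x_1}u_1[\rho_{\text{in}}](0)$, only that it is finite. The paper does \emph{not} try to impose the sign on $\rho_{\text{in}}$; instead it adds a second, independent perturbation
$\rho_{K,\delta_0}(x)=\delta_0\sum_{i=1}^{K}\frac{f(2^{i}x)}{i2^{i}}$
with a (different) symmetric $f$ supported in $(B_2\setminus B_1)\cap\{|x_2|\ge\sqrt3|x_1|\}$ and $x_2f(x)<0$; this gives $\|\rho_{K,\delta_0}\|_{H^2}\lesssim\delta_0$ while $\de_{x_1}u_1[\rho_{K,\delta_0}](0)=\delta_0\,\mathfrak H_K\,\de_{x_1}u_1[f](0)$ is arbitrarily negative for $K$ large. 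The initial datum is then $-x_2+\rho_{\text{in}}+\rho_{K,\delta_0}$. Your substitute — ``choose $f$ (still free in Lemma~\ref{initialdata}) so that $\de_{x_1}u_1[\rho_{\text{in}}](0)<0$'' — is in fact achievable, but you neither prove it nor flag that the obvious attempt fails: because $\rho_{\text{in}}(x)$ equals $x_2=r\sin\alpha$ near the origin and because $\int_0^{2\pi}\sin(3\alpha)\sin\alpha\,d\alpha=0$, the entire contribution from the region where $f(\mu)=\mu_2$ (and more generally from any $\sin\alpha$ angular profile) to $\de_{x_1}u_1[f](0)=\frac1\pi\int\frac{-\sin3\alpha}{r}f\,dr\,d\alpha$ vanishes identically. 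To get a strict sign you must inject a $\sin3\alpha$ angular harmonic into $f$ on the annulus $B_1\setminus B_{1/2}$, which is an extra, non-obvious design step. The paper sidesteps this entirely by using a separate term placed in the sector $|x_2|\ge\sqrt3|x_1|$.

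\textbf{Error in the support argument.} You initially note, correctly, that $\rho(x,0)=-x_2+\rho_{\text{in}}(x)=0$ on $B_{\delta_0}(0)$. But then you reverse it: ``on the hole $\rho_{pert}=0$ so $\rho=-x_2$'', and you propose to prove $\supp\rho_{pert}\cap B_\delta=\emptyset$ instead of the stated $\supp\rho\cap B_\delta=\emptyset$. This is backwards: on the hole $\rho_{pert}(x,0)=\rho_{\text{in}}(x)=x_2\neq0$, so $\rho_{pert}$ does \emph{not} vanish there, while $\rho$ does. Item~2 of the lemma is literally correct as written — $\rho$ vanishes on a ball — and requires no reinterpretation. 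Attempting to prove the vanishing of $\supp\rho_{pert}$ near the origin, as you propose, is simply false for this initial datum; the propagation argument must be run for $\supp\rho$, exactly as the paper does (via $\mathbf u[\rho](0,t)=0$ by symmetry and $|\mathbf u(x,t)|\le C|x|$, so the support can only approach the origin exponentially).
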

\begin{Rmk}
As $\rho_{pert}$ satisfies \eqref{eq:ipm-stable}, notice the validity of the estimate 
\begin{align*}
        \frac{d}{dt}\|\rho_{pert}\|_{H^4}^2 \lesssim \|\rho_{pert}\|_{H^4}^3,
    \end{align*}
which implies that $\|\rho_{pert}(t)\|_{H^4}$ is uniformly bounded for all $t \in [0, T_{pert}]$ with $T_{pert} \sim \|\rho_{pert}(x, 0)\|_{H^4}^{-1}$. This follows from the well-posedness of the stable equation \eqref{eq:ipm-stable} in $H^{2+\epsilon}(\R^2)$ for any $\epsilon>0$, as mentioned in the introduction.
\end{Rmk}
\begin{proof}
    We will first use Lemma \ref{initialdata} to find $\rho_{\text{in}}(x)$ such that $\|\rho_{\text{in}}(x)\|_{H^2}\leq \frac{\epsilon_{0}}{2}$ with $\rho_{\text{in}}(x)=x_{2}$ for $x$ in some small ball $B_{\delta_0}(0)$. Note that, in particular, $-x_{2}+\rho_{\text{in}}(x)=0$ for $x\in B_{\delta_0}(0)$, and also that $|\partial_{x_1}u_{1}[\rho_{\text{in}}](x=0)|< \infty$.
    Furthermore, we consider
    $$\rho_{K,\delta_0}(x)=\delta_0\sum_{i=1}^{K}\frac{f(2^{i}x)}{i2^{i}}$$
    where $f(x)\in C^{\infty}$ is a symmetric function such that ${x_{2}f(x)<0}$, with
    $$\supp f(x)\subset (B_{2}(0)\setminus B_{1}(0))\cap \{(x_{1},x_{2}): |x_{2}|\geq \sqrt{3} |x_{1}|\}.$$ 
    Using the fact that $f(2^{i_{1}}x)$ and $f(2^{i_{2}}x)$ have disjoint support if $i_{1}\neq i_{2}$, we have that
    $$\|\rho_{K,\delta_0}\|^2_{H^2}=\delta_0^2\sum_{i=1}^{K}\left\|\frac{f(2^{i}x)}{i2^{i}}\right\|^2_{H^2}\leq \delta_0^2 \|f(x)\|^2_{H^2}\sum_{i=1}^{K}\frac{1}{i^2}\leq C\delta_0^2$$
    and therefore, by taking $\delta_0$ small we obtain
    $$\|f_{k,\delta_0}\|_{H^2}\leq \frac{\epsilon_{0}}{2}.$$
    Note that, relying on the explicit formula
    \begin{equation}\label{eq:der-u1}
    \de_{x_1} u_1 [\rho](x=0, t) = \frac{1}{\pi} \int_{\R^2}  \frac{y_2 (y_2^2-3y_1^2)}{|\by|^6} \rho (t, y_1, y_2) \, d y_1 \, d y_2,
    \end{equation}
    and using that $\supp f(x) \subset \{(x_{1},x_{2}): |x_{2}|\geq \sqrt{3} |x_{1}|\}$ and $x_2 f(x) < 0$, we deduce
    $$\partial_{x_1}u_{1}\left[\frac{f(2^{i}x)}{2^{i}}\right](x=0)=\partial_{x_1}u_{1}[f(x)](x=0)=\frac{1}{\pi} \int_{\R^2}  \frac{y_2 (y_2^2-3y_1^2)}{|\by|^6} f(y_1, y_2) \, d y_1 \, d y_2<0,$$
    which means that, for any positive number $N>0$, for $K$ big enough, we have
    $$\partial_{x_1}u_{1}[\rho_{K,\delta_0}](x=0)=\delta_0 \partial_{x_1}u_{1}[f(x)](x=0)\sum_{i=1}^{K}\frac{1}{i}\leq  -N.$$
    In particular, by taking $K$ big we achieve
    $$\partial_{x_1}u_{1}[\rho_{\text{in}}+\rho_{K,\delta_0}](x=0)<0.$$
    We will then consider $\rho(x,t)$ the solution to \eqref{eq:ipm} with initial conditions $-x_{2}+\rho_{\text{in}}+\rho_{K,\delta_0}$. Note that, by the local well-posedness of the \eqref{eq:ipm} equation around the stable profile $-x_{2}$, we know that $\rho(x,t)\in C^{\infty}$ exists for some short period of time $[0,T_{1}]$. Furthermore, by the choice of the initial condition, our lemma is already fulfilled at the initial time. 

    To prove that there exists $\delta>0$ such that $\supp (\rho(x,t)) \cap   B_\delta=\emptyset$, we use that, for $t=0$, both $-x_{2}+\rho_{\text{in}}(x)$ and $\rho_{K,\delta_0}(x)$ have support away from the origin. Furthermore, the symmetry of $\rho(x,t)$ means that that $\mathbf{u}[\rho](0,t)=0$, and $\mathbf{u}[\rho](x,t)\in C^1$. This implies that the support of $\rho(x,t)$ can only approach the origin exponentially fast (since the support moves with the velocity $\mathbf{u}[\rho](x,t)$ and $|\mathbf{u}(x,t)|\leq C|x|$), and thus $\rho(x,t)$ is also supported away from the origin for $t\in[0,T_{1}]$.

    Finally, to show that $\partial_{x_1}u_{1}[\rho](x=0,t)<0$, we note that $\partial_{x_1}u_{1}[\rho](x=0,0)<0$, and using the well-posedness of the solution in $C^{\infty}$, which in particular gives us continuity of the solution, provides $\partial_{x_1}u_{1}[\rho](x=0,t)<0$ for $t\in[0,T_{2}]$. Taking $T=\text{min}(T_{1},T_{2})$ concludes the proof.
\end{proof}

To achieve a solution inducing a pronounced deformation around the origin, we employ an iterative process. Assuming we have a solution $\rho_{j}(x,t)$ such that 
\[
\int_{0}^{T}\partial_{x_{1}} u_1[\rho_{j}](x=0,t)\, dt = -M,
\]
with $M > 0$,
we would like to modify this solution by adding a perturbation $\rho_{\text{pert}}(x,0)$. Considering the solution to \eqref{eq:ipm} as $\rho_{j+1}(x,t)$ with initial condition $\rho_{j}(x,0) + \rho_{\text{pert}}(x,0)$, we aim for $\rho_{j+1}(x,t)$ to satisfy 
\[
\int_{0}^{T}\partial_{x_{1}} u_1[\rho_{j+1}](x=0,t)\, dt \leq -(M + c_{M}),
\]
where $c_{M} > 0$. Before doing that, we need to prove the stability of the deformation at the origin over time, in the intermediate result below.

\subsection{Persistence of the deformation at the origin}
\begin{lem} \label{stabledeformation}
For any $M,T>0$, suppose that $k(t)$ satisfies 
\begin{align*}
    k(t)<0, \qquad  0< -\int_0^T k(t) \, d t \le M,
\end{align*}
for all $t \in [0,T]$. 
There exists a constant $\mathfrak C >0$, such that the following holds.
Given a symmetric function $\tilde{\rho}_{\text{in}}(x)$ with $x_{2}\tilde{\rho}_{\text{in}}(x)\leq 0$, and
\begin{align}\label{hp:supp}
\supp (\tilde \rho_{\text{in}}(x))\subset\mathcal{D}:=\{(x_1, x_2) \, | \, |x_{2}| > \mathfrak C |x_{1}|\},
\end{align}
suppose that $\tilde \rho (x, t)$ solves
\begin{equation}\label{eq:lemmastabledef}
\begin{cases}
    \de_t \tilde{\rho} + (k(t)(x_{1},-x_{2})) \cdot \nabla \tilde{\rho}=0, \quad t \in [0,T]\\
    \tilde \rho (x, 0)=\tilde \rho_{\text{in}}(x).
\end{cases}
\end{equation}
Then $\tilde{\rho}(x,t)$ is symmetric and, for $t\in[0,T]$, it satisfies:
\begin{align}
    \de_{t}\de_{x_1} u_1[\tilde{\rho}](x=0,t)&\geq 0; \quad (\text{and} \quad 
    \de_{x_1} u_1[\tilde{\rho}](x=0,t)\geq \de_{x_1} u_1 [\tilde{\rho}](x=0, t=0)); \\
    \de_{x_1} u_1[\tilde{\rho}](x=0,t)&\leq  {\text{e}^{-7M}}\de_{x_1} u_1[\tilde{\rho}](x=0, t=0). \label{ineq:lemma1.3-lower}
\end{align}
\end{lem}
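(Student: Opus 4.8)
The plan is to solve the linear transport equation \eqref{eq:lemmastabledef} by its flow and reduce both assertions to pointwise inequalities for the kernel in \eqref{eq:der-u1}. Set $\beta(t):=-\int_0^t k(\tau)\,d\tau$, so that $\beta(0)=0$, $\beta'(t)=-k(t)>0$ and $0\le\beta(t)\le M$ on $[0,T]$. The flow map of \eqref{eq:lemmastabledef} is the diagonal, volume-preserving map
\begin{equation*}
\Phi(x,t)=\bigl(x_1\mathrm{e}^{-\beta(t)},\,x_2\mathrm{e}^{\beta(t)}\bigr),\qquad \tilde\rho(x,t)=\tilde\rho_{\text{in}}\bigl(\Phi^{-1}(x,t)\bigr).
\end{equation*}
Since $\Phi$ is diagonal it preserves the class of Definition \ref{def:symmetric}, so $\tilde\rho(\cdot,t)$ is symmetric; since it contracts the first coordinate and dilates the second (because $\beta\ge0$) it maps $\mathcal{D}$ into itself, so $\supp\tilde\rho(\cdot,t)\subset\mathcal{D}$; and since it preserves the sign of the second coordinate, $x_2\tilde\rho(x,t)\le0$ for all $t$. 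Inserting $\tilde\rho(\cdot,t)$ into \eqref{eq:der-u1} and changing variables back to $\tilde\rho_{\text{in}}$ (the Jacobian is $1$) gives, with $\phi_1:=y_1\mathrm{e}^{-\beta}$, $\phi_2:=y_2\mathrm{e}^{\beta}$,
\begin{equation*}
\de_{x_1}u_1[\tilde\rho](x=0,t)=\frac1\pi\int_{\R^2}\frac{\phi_2\,(\phi_2^2-3\phi_1^2)}{(\phi_1^2+\phi_2^2)^3}\,\tilde\rho_{\text{in}}(y_1,y_2)\,dy_1\,dy_2 .
\end{equation*}
On $\supp\tilde\rho_{\text{in}}\subset\mathcal{D}$ one has $\phi_2^2\ge\mathfrak{C}^2\mathrm{e}^{4\beta}\phi_1^2\ge\mathfrak{C}^2\phi_1^2$, so fixing $\mathfrak{C}$ with $\mathfrak{C}^2\ge5+2\sqrt5$ (in particular $\mathfrak{C}^2>3$) makes $\phi_2^2-3\phi_1^2>0$ there; since moreover $\phi_2\tilde\rho_{\text{in}}\le0$ (same sign as $y_2\tilde\rho_{\text{in}}$), the integrand is $\le0$ and hence $\de_{x_1}u_1[\tilde\rho](x=0,t)\le0$ for every $t$, in particular at $t=0$.

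For the monotonicity I would differentiate the last display under the integral sign. From $\de_\beta\phi_1=-\phi_1$ and $\de_\beta\phi_2=\phi_2$ a direct computation gives
\begin{equation*}
\de_\beta\!\left(\frac{\phi_2(\phi_2^2-3\phi_1^2)}{(\phi_1^2+\phi_2^2)^3}\right)=\frac{3\,\phi_2\,\bigl(10\phi_1^2\phi_2^2-5\phi_1^4-\phi_2^4\bigr)}{(\phi_1^2+\phi_2^2)^4},
\end{equation*}
so that $\de_t\de_{x_1}u_1[\tilde\rho](x=0,t)$ equals $\frac{3(-k(t))}{\pi}$ times the integral of $\phi_2(10\phi_1^2\phi_2^2-5\phi_1^4-\phi_2^4)(\phi_1^2+\phi_2^2)^{-4}\tilde\rho_{\text{in}}$ over $\R^2$. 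On the support, with $s:=\phi_2^2/\phi_1^2\ge\mathfrak{C}^2\ge5+2\sqrt5$ (the case $\phi_1=0$ being immediate), the quadratic $s^2-10s+5$ is nonnegative --- its larger root is $5+2\sqrt5$ --- i.e.\ $10\phi_1^2\phi_2^2-5\phi_1^4-\phi_2^4\le0$; together with $\phi_2\tilde\rho_{\text{in}}\le0$ and $-k(t)>0$ the integrand is $\ge0$, so $\de_t\de_{x_1}u_1[\tilde\rho](x=0,t)\ge0$. Integrating in $t$ yields $\de_{x_1}u_1[\tilde\rho](x=0,t)\ge\de_{x_1}u_1[\tilde\rho](x=0,t=0)$, which is the first assertion.

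For \eqref{ineq:lemma1.3-lower} I would estimate the integral representation termwise, exploiting that its integrand is $\le0$. First, $\phi_1^2+\phi_2^2=\mathrm{e}^{-2\beta}y_1^2+\mathrm{e}^{2\beta}y_2^2\le\mathrm{e}^{2M}|\by|^2$, so enlarging the denominator to $\mathrm{e}^{6M}|\by|^6$ only decreases a nonpositive integrand; this gives $\de_{x_1}u_1[\tilde\rho](x=0,t)\le\frac{\mathrm{e}^{-6M}}{\pi}\int_{\R^2}\phi_2(\phi_2^2-3\phi_1^2)|\by|^{-6}\tilde\rho_{\text{in}}\,dy_1\,dy_2$. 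Second, expanding $\phi_2(\phi_2^2-3\phi_1^2)\tilde\rho_{\text{in}}=\mathrm{e}^{3\beta}y_2^3\tilde\rho_{\text{in}}-3\mathrm{e}^{-\beta}y_1^2y_2\tilde\rho_{\text{in}}$ and using that $y_2^3\tilde\rho_{\text{in}}=y_2^2(y_2\tilde\rho_{\text{in}})\le0$ with $\mathrm{e}^{3\beta}\ge1$, while $-y_1^2y_2\tilde\rho_{\text{in}}\ge0$ with $\mathrm{e}^{-\beta}\le1$, one obtains the pointwise bound $\phi_2(\phi_2^2-3\phi_1^2)\tilde\rho_{\text{in}}\le y_2(y_2^2-3y_1^2)\tilde\rho_{\text{in}}$. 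Integrating and recognizing the right-hand side as $\pi$ times \eqref{eq:der-u1} at $t=0$ gives $\de_{x_1}u_1[\tilde\rho](x=0,t)\le\mathrm{e}^{-6M}\de_{x_1}u_1[\tilde\rho](x=0,0)$; since $\de_{x_1}u_1[\tilde\rho](x=0,0)\le0$ and $\mathrm{e}^{-6M}\ge\mathrm{e}^{-7M}$, this is in fact stronger than \eqref{ineq:lemma1.3-lower}. (Throughout I assume $\tilde\rho_{\text{in}}$ smooth and, say, compactly supported away from the origin, as in the applications of the lemma, so that the integrals converge and differentiation under the integral is legitimate.)

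The main obstacle is the sign bookkeeping rather than any single computation. One must control three sign facts at once: the angular weight $\phi_2^2-3\phi_1^2$ of the first-order quantity stays positive on the transported support; the angular weight $\phi_2^4-10\phi_1^2\phi_2^2+5\phi_1^4$ of its time derivative stays positive, which is exactly what fixes the admissible opening of the cone through $s^2-10s+5\ge0$; and the weight $\phi_2\tilde\rho_{\text{in}}$ keeps its sign, for which it is essential that the hypothesis be $x_2\tilde\rho_{\text{in}}(x)\le0$ (not merely a fixed sign of $\tilde\rho_{\text{in}}$), since this is the quantity transported by the diagonal flow. Once these are in hand, the remaining subtlety is to order the final chain of inequalities so that bounding numerator and denominator from above acts in the correct direction on a nonpositive integrand --- which is precisely why the argument delivers the one-sided exponential bound and not an equality.
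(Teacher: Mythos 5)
Your proof is correct and follows the same route as the paper's own proof: push forward along the diagonal flow map, track the sign of the transported kernel $\phi_2(\phi_2^2-3\phi_1^2)$ on the cone, differentiate under the integral for the monotonicity statement, and bound numerator and denominator termwise for the exponential estimate; your bookkeeping is in fact marginally sharper, giving the optimal cone condition $\mathfrak{C}^2\ge 5+2\sqrt5$ instead of the paper's $\mathfrak{C}\ge\sqrt{10}$ and the constant $\mathrm{e}^{-6M}$ instead of $\mathrm{e}^{-7M}$, both of which suffice. One phrase is stated backwards --- enlarging the denominator \emph{increases} (brings closer to zero) a nonpositive integrand, it does not decrease it --- but the displayed inequality that follows has the correct direction, so this is only a slip in the prose.
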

\begin{proof}
First, recall the explicit formula in \eqref{eq:der-u1}:
\begin{equation*}
\de_{x_1} u_1 [\tilde \rho](x=0, t) = \frac{1}{\pi} \int_{\R^2}  \frac{y_2 (y_2^2-3y_1^2)}{|\by|^6}\tilde \rho(t, y_1, y_2) \, d y_1 \, d y_2.
\end{equation*}
In polar coordinates $(y_1', y_2')=r'  (\cos \alpha', \sin \alpha')$, it reads
\begin{equation}\label{eq:der-u1-polar}
\de_{x_1} u_1 [\tilde \rho](x=0, t) = \frac{1}{\pi}  \int_0^{2\pi} \int_0^\infty   -\frac{\sin(3\alpha ')}{(r')^2} \tilde \rho(t, r', \alpha') \, d r' \, d \alpha'.
\end{equation}
Notice that if $\tilde{\rho}_{\text{in}}(x)x_{2}<0$, then using \eqref{eq:der-u1}
\begin{align*}
    \de_{x_1}u_1[\tilde \rho](x=0, t=0)<0 \quad \text{for} \quad \text{supp}(\tilde{\rho}_{\text{in}}(x)) \in \mathcal{D} \quad \text{with} \quad \mathfrak C > \tan \frac{\pi}{3}=\sqrt 3. 
\end{align*}
We need to estimate $\de_{x_1}u_1[\tilde \rho](x=0, t)$. 
Let us consider the flow map $\Phi (x_1, x_2, t)=(\phi_1(x_1, t), \phi_2(x_2, t))^T$ associated with the transport equation \eqref{eq:lemmastabledef}. Explicitly, we have that
\be
\Phi (x_1, x_2, t)=\begin{pmatrix}
\phi_1(x_1, t)\\
\phi_2(x_2, t)
\end{pmatrix}=  \begin{pmatrix}
    x_1 & 0 \\
    0 & x_2
\end{pmatrix}\begin{pmatrix} \mathrm{e}^{\int_0^t k (\tau) \, d\tau} \\
\mathrm{e}^{-\int_0^t k (\tau) \, d\tau}
\end{pmatrix}.
\ee
Inserting the above into \eqref{eq:der-u1} and relying on the change of coordinates induced by the flow map yields 
\begin{align}
    \de_{x_1} u_1 [\tilde \rho](x=0, t) & = \frac{1}{\pi} \int_{\R^2}  \frac{y_2 (y_2^2-3y_1^2)}{|\by|^6}\tilde \rho_{\text{in}}\left( \phi_1^{-1}(y_1, t), \phi_2^{-1}(y_2, t) \right) \, d y_1 \, d y_2\notag\\
    & = \frac{1}{\pi} \int_{\R^2}  \frac{\phi_2 (\phi_2^2-3\phi_1^2)}{|\Phi|^6}\tilde \rho_{\text{in}}( y_1, y_2) \, d y_1 \, d y_2.
\end{align}
First, we would like to show that $\de_{x_1} u_1 [\tilde \rho](x=0, t)<0$ for all $t\in [0,T]$. 
To ensure that, since $\tilde \rho_{\text{in}} \phi_2 < 0$, we want
$\phi_2^2-3\phi_1^2 >0$, which amounts at requiring
\begin{align}
    |y_2|^2 > 3 \mathrm{e}^{4\int_0^t k(\tau) \, d \tau}|y_1|^2,
\end{align}
and this is satisfied as soon as we choose $\mathfrak C \geq \sqrt{3}$ in \eqref{hp:supp} as $k(t)<0$.
\\
Now, upon relying on the change of coordinates induced by the flow map, consider
\begin{align}
    \de_t \de_{x_1} u_1 [\tilde \rho](x=0, t)&= \frac{1}{\pi}\int_{\R^2_+} \de_t \left(\frac{\phi_2(y_2, t) (\phi_2^2(y_2, t)-3\phi_1^2(y_1, t))}{|\Phi (y_1, y_2, t)|^6}\right)\tilde \rho_{\text{in}}(y_1, y_2) \, d y_1 \, d y_2\notag\\
    &= \frac{k(t)}{\pi} \int_{\R^2_+}\frac{\phi_2(y_2, t) (3\phi_2^4(y_2, t)+15\phi_1^4(y_1, t)-30 \phi_1^2(y_1, t) \phi_2^2(y_2, t))}{|\Phi (y_1, y_2, t)|^8} \tilde \rho_{\text{in}}(y_1, y_2) \, d y_1 \, d y_2.
\end{align}
Since $k(t)<0$ and $\phi_2(y_2, \cdot) \tilde \rho_{\text{in}}(y_1, y_2)<0$ for $(y_1, y_2) \in \supp (\tilde \rho_{\text{in}})$, we would like to prove that 
\begin{align}\label{ineq:30}
    30 \phi_1^2(y_1, t) \phi_2^2(y_2, t)  \le 3\phi_2^4(y_2, t)+15\phi_1^4(y_1, t),
\end{align}
namely 
\begin{align*}
    30 y_1^2 y_2^2 \le 3 \mathrm{e}^{-4\int_0^t k(\tau) \, d\tau} y_2^4 + 15\mathrm{e}^{4\int_0^t k(\tau) \, d\tau} y_1^4.
\end{align*}
This is true as soon as  
\begin{align*}
   30 y_1^2 y_2^2 \le 3 \mathrm{e}^{-4\int_0^t k(\tau) \, d\tau} y_2^4,
\end{align*}
which holds provided that
\begin{align*}
    |y_2| \geq\sqrt{10}\mathrm{e}^{2\int_0^t k(\tau) \, d \tau} |y_1|.
\end{align*}
In the definition of the domain \eqref{hp:supp}, we then require
\begin{align*}
    \mathfrak C \geq \sqrt{10}.
\end{align*}
It remains to show the last inequality:
\begin{align*}
    \de_{x_1}u_1[\tilde \rho](x=0, t) & = \frac{1}{\pi}\int_{\R^2_+} \frac{\mathrm{e}^{-\int_0^t k(\tau) \, d \tau} y_2 \left(\mathrm{e}^{-2\int_0^t k(\tau) \, d \tau}y_2^2 - 3 \mathrm{e}^{2\int_0^t k(\tau) \, d \tau} y_1^2\right)}{\left(\mathrm{e}^{2\int_0^t k(\tau) \, d \tau}y_1^2 + \mathrm{e}^{-2\int_0^t k(\tau) \, d \tau}y_2^2 \right)^3}\, \tilde \rho_{\text{in}} ( y_1, y_2)\,  d y_1 \, d y_2\\
    & \leq \frac{1}{\pi} \int_{\R^2_+} \frac{\mathrm{e}^{-\int_0^t k(\tau) \, d \tau} y_2 \left(\mathrm{e}^{-2\int_0^t k(\tau) \, d \tau}y_2^2 - 3 \mathrm{e}^{2\int_0^t k(\tau) \, d \tau} y_1^2\right)}{\left(\mathrm{e}^{-2\int_0^t k(\tau) \, d \tau} |y|^2 \right)^3}\,  \tilde \rho_{\text{in}} (y_1, y_2) \, d y_1 \, d y_2 \\
    & = \frac{\mathrm{e}^{7\int_0^t k(\tau) \, d \tau}}{\pi} \int_{\R_+^2} \frac{y_2}{|y|^6} \left(\mathrm{e}^{-4\int_0^t k(\tau) \, d \tau}y_2^2-3 y_1^2\right)\,  \tilde \rho_{\text{in}} (y_1, y_2) \, d y_1 \, d y_2\\
    & = \frac{\mathrm{e}^{7\int_0^t k(\tau) \, d \tau}}{\pi} \int_{\R_+^2} \frac{y_2}{|y|^6} \left(y_2^2-3 y_1^2\right)\,  \tilde \rho_{\text{in}} (y_1, y_2) \, d y_1 \, d y_2\\
    &\quad + \frac{3\mathrm{e}^{7\int_0^t k(\tau) \, d \tau}}{\pi} \int_{\R_+^2} \frac{y_2}{|y|^6} \left(\mathrm{e}^{-4\int_0^t k(\tau) \, d \tau}-1\right)y_2^2 \, \tilde \rho_{\text{in}} (y_1, y_2) \, d y_1 \, d y_2\\
    & \leq {\mathrm{e}^{-7M}}\de_{x_1}u_1[\tilde \rho](x=0, t=0),
\end{align*}
where the last inequality holds since  $\left(\mathrm{e}^{-4\int_0^t k(\tau) \, d \tau}-1\right)>0$ (as $k(t)<0$) and $y_2 \tilde \rho_{\text{in}}<0$.
\end{proof}
Lemma \ref{stabledeformation} allows us to derive useful properties of the solutions we construct; however, it is applicable only when $k(t) < 0$. Specifically, we will apply this lemma in the case where $\partial_{x_{1}} u_{1} < 0$, which is precisely the setting of the preceding Lemma \ref{lem:stable-exact-sol}.

\section{Gluing interior and exterior approximate solutions}
Now, after proving the stability of the deformation at the origin over time in Lemma \ref{stabledeformation}, we approximate the solution to our equations with different layers and for that we need a gluing lemma.
%
%
%
\begin{lem}\label{gluing1}
    Let $T>0, M>0$ be given. For $t \in [0, T]$, let $\rho(x,t) \in  C([0,T]; H^4 (\R^2))$ be a symmetric solution, in the sense of Definition \ref{def:symmetric}, to the IPM equation \eqref{eq:ipm} with stable initial data \eqref{eq:data-stable}. Furthermore, assume that $\supp(\rho(x,t)) \cap B_\delta (0) = \emptyset $ for some $\delta>0$, and let $f(x)$ be a compactly supported $H^4(\R^2)$ function. Setting
    \begin{align*}
        k(t)&=(\de_{x_{1}}u_1[\rho])(x=0,t)<0, \quad -\int_{0}^{T}k(t) \, d t \leq M,
    \end{align*}
    introduce
    \begin{align}\label{simp1}
    &\begin{cases}
        \partial_{t}\bar{\rho}_{\lambda, \text{inter}}(x,t)+\mathbf{u}[\bar{\rho}_{\lambda, \text{inter}}]\cdot \nabla \bar{\rho}_{\lambda, \text{inter}}+k(t)(x_{1},-x_{2})\cdot\nabla \bar{\rho}_{\lambda, \text{inter}}=0,\\
        \bar{\rho}_{\lambda, \text{inter}}(x,0)=\frac{f(\lambda x)}{\lambda},
    \end{cases}\\
    \label{eq:sum1}
    &\begin{cases}
         \partial_{t}\tilde{\rho}_{\text{ext}}(x,t)+\mathbf{u}[\tilde{\rho}_{\lambda, \text{inter}}+\tilde{\rho}_{\text{ext}}]\cdot \nabla \tilde{\rho}_{\text{ext}}=0, \\
    \tilde{\rho}_{\text{ext}}(x,0)=\rho(x,0)=-x_2+\rho_{\text{in}}(x),
    \end{cases}\\
    &\begin{cases}
    \partial_{t}\tilde{\rho}_{\lambda, \text{inter}}(x,t)+\bold{u}[\tilde{\rho}_{\lambda, \text{inter}}+\tilde{\rho}_{\text{ext}}]\cdot \nabla \tilde{\rho}_{\lambda, \text{inter}}=0,\notag\\
    \tilde{\rho}_{\lambda, \text{inter}}(x,0)=\frac{f(\lambda x)}{\lambda}.
    \end{cases} 
    \end{align}
    Assume that $\|\rho(x,t)+x_2\|_{H^4}$ and $\left\|\lambda \bar{\rho}_{\lambda, \text{inter}}\left(\frac{x}{\lambda}, t\right)\right\|_{H^4}$ are uniformly bounded for $t \in [0,T]$, $\lambda>1$.
    Then, there exists a constant $C>0$, depending on $\delta>0$, $T$, $M$, $\|\rho(x,t)+x_2\|_{H^4}$, $\|\lambda \bar{\rho}_{\lambda, \text{inter}}(\frac{x}{\lambda}, t)\|_{H^4}$,
    such that, for $\lambda>0$ large enough, the following estimates hold:
    \begin{align}
    &\|\rho(x, t)-\tilde{\rho}_{\text{ext}}(x,t)\|_{H^3}\le C t \lambda^{-1}, \quad \de_{t}\|\rho(x,t)-\tilde{\rho}_{\text{ext}}(x,t)\|_{H^3}\le C \lambda^{-1},\label{est:nonscaled-new}\\
    & \left\|\lambda \tilde{\rho}_{\lambda, \text{inter}}\left(\frac{x}{\lambda}\right)-\lambda \bar{\rho}_{\lambda, \text{inter}}\left(\frac{x}{\lambda}\right)\right\|_{H^3}\leq C t\lambda^{-1},\de_{t}\left\|\lambda \tilde{\rho}_{\lambda, \text{inter}}\left(\frac{x}{\lambda}\right)-\lambda \bar{\rho}_{\lambda, \text{inter}}\left(\frac{x}{\lambda}\right)\right\|_{H^3}\leq C\lambda^{-1}.\label{est:scaled-new}
    \end{align}
    Finally, $\|\tilde \rho_{\text{ext}}(x, t)+x_2\|_{H^4}$ and $\left\|\lambda \tilde{\rho}_{\lambda, \text{inter}}\left(\frac{x}{\lambda}\right)\right\|_{H^4}$ are uniformly bounded with respect to $\lambda$ for all $t \in [0,T]$.
\end{lem}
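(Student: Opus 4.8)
The plan is to view the triple of equations \eqref{simp1}--\eqref{eq:sum1} as a coupled transport system and compare it, layer by layer, with the exact IPM solution $\rho(x,t)$ together with the ``split'' solution $\tilde\rho_{\text{ext}}+\tilde\rho_{\lambda,\text{inter}}$. The key observation making the comparison work is scale separation: $\rho(x,t)$ (hence $\tilde\rho_{\text{ext}}$) is supported outside $B_\delta(0)$, while $\bar\rho_{\lambda,\text{inter}}(\cdot,0)=\tilde\rho_{\lambda,\text{inter}}(\cdot,0)=f(\lambda x)/\lambda$ is supported in $B_{C/\lambda}(0)$; for $\lambda$ large the two supports are $O(1)$ apart. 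Because the Riesz-type kernels in \eqref{eq:kernels} are smooth away from the origin, the velocity $\mathbf{u}[\tilde\rho_{\text{ext}}]$ felt by the inner layer is, on $B_{C/\lambda}(0)$, well approximated by its linearization $k(t)(x_1,-x_2)$ at the origin — with an error that is $O(\lambda^{-1})$ in the relevant norms after rescaling — which is exactly why $\bar\rho_{\lambda,\text{inter}}$ (the linearized inner evolution) and $\tilde\rho_{\lambda,\text{inter}}$ (the true inner evolution) stay $O(t\lambda^{-1})$ close. Conversely, the velocity $\mathbf{u}[\tilde\rho_{\lambda,\text{inter}}]$ produced by the highly concentrated inner layer is, away from the origin, small: the mass of $\tilde\rho_{\lambda,\text{inter}}$ is $O(\lambda^{-1})$ times that of $f$ and it is squeezed in $B_{C/\lambda}(0)$, so on $\{|x|\ge\delta/2\}$ its contribution to $\mathbf{u}$ and all derivatives is $O(\lambda^{-N})$; this is why $\rho$ and $\tilde\rho_{\text{ext}}$ stay close.

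I would carry this out in the following steps. \emph{Step 1 (a priori bounds and rescaling).} Introduce the rescaled inner profiles $\Theta_\lambda(x,t):=\lambda\tilde\rho_{\lambda,\text{inter}}(x/\lambda,t)$ and $\bar\Theta_\lambda(x,t):=\lambda\bar\rho_{\lambda,\text{inter}}(x/\lambda,t)$; note $f(x)=\Theta_\lambda(x,0)=\bar\Theta_\lambda(x,0)$ is $\lambda$-independent. The equation for $\bar\Theta_\lambda$ becomes, after rescaling, a transport equation by $\mathbf{u}[\bar\Theta_\lambda]+k(t)(x_1,-x_2)$ (the linear term is scale invariant), so the hypothesis that $\|\bar\Theta_\lambda\|_{H^4}$ is uniformly bounded on $[0,T]$ is consistent and will be used as input. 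Using the standard $H^4$ energy estimate for transport by an $H^4$ (hence Lipschitz-gradient) divergence-free velocity, together with the fact that the extra smooth drift $k(t)(x_1,-x_2)$ contributes at most $|k(t)|\|\Theta_\lambda\|_{H^4}$ after cutting off to the $O(1)$-sized support, close a Grönwall bound showing $\|\Theta_\lambda\|_{H^4}\le C$ on $[0,T]$ uniformly in $\lambda$; the same argument gives the uniform $H^4$ bound for $\tilde\rho_{\text{ext}}+x_2$, since on its support the perturbative velocity $\mathbf{u}[\tilde\rho_{\lambda,\text{inter}}]$ and the linear $-x_2$-term contribute controllably. \emph{Step 2 (cross terms are small).} Prove the two ``small interaction'' estimates: (i) for any $N$, $\|\mathbf{u}[\tilde\rho_{\lambda,\text{inter}}]\|_{H^4(\{|x|\ge\delta/2\})}\le C_N\lambda^{-N}$, obtained by writing $\mathbf{u}=K\star\nabla^\perp\tilde\rho_{\lambda,\text{inter}}$ as in \eqref{eq:vel-kernel2} and noting that for $|x|\ge\delta/2$ and $y\in\supp\tilde\rho_{\lambda,\text{inter}}\subset B_{C/\lambda}(0)$ the kernel is smooth, so one can Taylor-expand $K(x-y)$ and use that $\int y^\gamma\,\nabla^\perp\tilde\rho_{\lambda,\text{inter}}$ has size $\lambda^{-1-|\gamma|}$; and (ii) for $x\in B_{C/\lambda}(0)$, $\|\mathbf{u}[\tilde\rho_{\text{ext}}](x,t)-k(t)(x_1,-x_2)\|\le C\lambda^{-1}$ together with matching derivative bounds after rescaling, obtained by Taylor-expanding the smooth-on-$B_{2C/\lambda}(0)$ velocity field $\mathbf{u}[\tilde\rho_{\text{ext}}]$ about the origin, using $\mathbf{u}[\tilde\rho_{\text{ext}}](0,t)=0$ (symmetry), $\nabla\mathbf{u}[\tilde\rho_{\text{ext}}](0,t)=\mathrm{diag}(k(t),-k(t))$ up to the $O(\lambda^{-1})$ difference between $\mathbf{u}[\rho]$ and $\mathbf{u}[\tilde\rho_{\text{ext}}]$ (which is part of what we are proving, so this is set up as a bootstrap), and the uniform $H^4\hookrightarrow C^2$ bound on $\tilde\rho_{\text{ext}}+x_2$ from Step 1. \emph{Step 3 (difference estimates via Grönwall).} Set $w_1:=\rho-\tilde\rho_{\text{ext}}$ and $w_2:=\Theta_\lambda-\bar\Theta_\lambda$. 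Subtracting the respective equations, each $w_j$ solves a transport equation of the form $\partial_t w_j + V_j\cdot\nabla w_j = F_j$, where $V_j$ is an $H^3$-bounded divergence-free velocity and the forcing $F_j$ collects (a) a commutator/transport-difference term linear in $w_1,w_2$ with $H^3\to H^3$ bounded coefficients (controlled by the $H^4$ bounds of Step 1), and (b) the genuinely small source terms from Step 2, of size $O(\lambda^{-1})$ in $H^3$. A standard $H^3$ energy estimate then yields $\frac{d}{dt}(\|w_1\|_{H^3}+\|w_2\|_{H^3})\le C(\|w_1\|_{H^3}+\|w_2\|_{H^3})+C\lambda^{-1}$ with $w_1(0)=w_2(0)=0$, and Grönwall gives $\|w_1\|_{H^3}+\|w_2\|_{H^3}\le C(\mathrm{e}^{Ct}-1)\lambda^{-1}\le Ct\lambda^{-1}$ on $[0,T]$; the time-derivative bounds in \eqref{est:nonscaled-new}--\eqref{est:scaled-new} are then read off directly from the evolution equations for $w_1,w_2$ using this $H^3$ bound and the Step-1/Step-2 estimates. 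The uniform $H^4$ conclusion for $\tilde\rho_{\text{ext}}+x_2$ and $\Theta_\lambda$ is precisely the output of Step 1.

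The main obstacle is Step 2(ii) combined with the bootstrap structure: to linearize $\mathbf{u}[\tilde\rho_{\text{ext}}]$ at the origin one needs $\nabla\mathbf{u}[\tilde\rho_{\text{ext}}](0,t)$ to agree with $k(t)\,\mathrm{diag}(1,-1)=\nabla\mathbf{u}[\rho](0,t)\,\mathrm{diag}(1,-1)$ up to $O(\lambda^{-1})$, but that agreement is exactly the quantity $w_1$ whose smallness is being proved; one must therefore run Steps 2 and 3 together as a single continuity/bootstrap argument (assume $\|w_1\|_{H^3}\le \lambda^{-1/2}$, say, on a maximal subinterval, derive the stronger $\le Ct\lambda^{-1}$, and conclude the subinterval is all of $[0,T]$ for $\lambda$ large). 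A secondary technical point is handling the drift $k(t)(x_1,-x_2)$, which is unbounded at infinity and so does not act nicely on $H^s(\R^2)$; this is dealt with by exploiting that all inner profiles remain supported in a fixed $O(1)$ ball on $[0,T]$ (their flow velocity is Lipschitz and vanishes at the origin, so supports only dilate by a bounded factor $\mathrm{e}^{CM}$), allowing one to freely multiply by a fixed smooth cutoff before running the energy estimates. Everything else is routine transport-equation energy analysis given the smoothness of the Riesz kernels away from the diagonal.
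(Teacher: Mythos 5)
Your proposal is essentially correct and uses the same key ingredients as the paper's proof: support separation between the outer ($|x|\ge\delta$) and inner ($|x|\lesssim\lambda^{-1}$) profiles, smallness of the inner layer's contribution to the velocity on the outer support, Taylor expansion of the outer velocity near the origin around $k(t)(x_1,-x_2)$ using the symmetry, rescaled $H^3$ energy/Gr\"onwall estimates, and a separate $H^4$ a~priori bound to propagate the regularity needed in the commutator terms. Two small remarks. First, the claim $\|\mathbf{u}[\tilde\rho_{\lambda,\text{inter}}]\|_{H^4(\{|x|\ge\delta/2\})}\le C_N\lambda^{-N}$ for arbitrary $N$ is an over-claim: the moments $\int y^\gamma\nabla^\perp\tilde\rho_{\lambda,\text{inter}}\,dy$ do not vanish beyond order zero unless extra cancellations are imposed on $f$, so a Taylor expansion of the kernel only buys a fixed power. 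The paper instead uses the conserved quantity $\|\tilde\rho_{\lambda,\text{inter}}\|_{L^2}\lesssim\lambda^{-2}$ together with Young's inequality on the smooth off-diagonal kernel to get an $O(\lambda^{-2})$ bound, which is already more than the $O(\lambda^{-1})$ needed. Second, the bootstrap you set up to handle the circularity between linearizing $\mathbf{u}[\tilde\rho_{\text{ext}}]$ at the origin and controlling $w_1=\rho-\tilde\rho_{\text{ext}}$ is avoidable: the estimate for $w_1$ uses only the smallness and concentration of $\tilde\rho_{\lambda,\text{inter}}$ (not the inner difference $w_2$), so one can close $\|w_1\|_{H^3}\lesssim t\lambda^{-1}$ first and then feed it into the inner estimate. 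The paper proceeds in exactly this sequential order (after a short-time continuity argument to get started), writing the extra drift as $\de_{x_1}u_1[\rho-\tilde\rho]\,(x_1,-x_2)\cdot\nabla\bar r$ plus a Taylor remainder $\mathrm{I}_2$, and bounding the first term using the already-proved $w_1$ estimate. Your coupled Gr\"onwall argument would work as well, but the sequential ordering is cleaner.
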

\begin{Rmk}
    Notice that $\tilde{\rho}_{\text{ext}}(x,t)+\tilde{\rho}_{\lambda, \text{inter}}(x,t)$ is simply the solution to \eqref{eq:ipm} with initial conditions $\rho(x,0)+\tilde \rho_{\lambda, \text{inter}}(x,0)$. On the other hand, $\rho(x,t)+\bar{\rho}_{\lambda, \text{inter}}(x,t)$ represents our naive approximation. Here, we demonstrate that under certain circumstances, we can significantly reduce the error of our approximation (in appropriate spaces).
\end{Rmk}
\begin{Rmk}
To ensure that we can find a solution $\rho(x,t)$ with the properties required in the statement of Lemma \ref{gluing1}, we apply Lemma \ref{initialdata} yielding an initial datum $\rho_{\text{in}}(x)$ such that, for some $\delta_0>0$,
\begin{align*}
    \supp (\rho (x, 0))=\supp (-x_2+\rho_{\text{in}}(x)) \subset \R^2\setminus B_{\delta_0}.
\end{align*}
Notice that such $\rho_{\text{in}}(x)$ is also involved in the initial datum of $\tilde \rho_{\text{ext}}(x, t)$.
Next, Lemma \ref{lem:stable-exact-sol} provides an exact solution $\rho(x,t)$ to \eqref{eq:ipm} with initial data as above, such that 
\begin{align*}
    \supp (\rho (x, t)) \subset \R^2\setminus B_{\delta}(0), \quad \delta \le \delta_0.
\end{align*}
\end{Rmk}
\begin{proof}[Proof of Lemma \ref{gluing1}]
By continuity in time, which is guaranteed by the well-posedness of the IPM equation (in this case in $H^3$), since $\rho (x, t)$ and $\tilde \rho_{\text{ext}} (x, t)$ depart from  the same initial data as well as $\tilde{\rho}_{\lambda, \text{inter}}$ and $\bar{\rho}_{\lambda, \text{inter}}$, for any $\epsilon>0$ there exists $T_\epsilon>0$ such that 
$$\|\rho(x,t)-\tilde{\rho}_{\text{ext}}(x,t)\|_{H^3}\le \epsilon,\quad \left\|\lambda \tilde{\rho}_{\lambda, \text{inter}}\left(\frac{x}{\lambda}, t\right)-\lambda \bar{\rho}_{\lambda, \text{inter}}\left(\frac{x}{\lambda}, t\right)\right\|_{H^3}\leq \epsilon, \quad t \in [0, T_\epsilon].$$
By using the hypotheses, this gives
\begin{align}\label{ineq:shortime}
\|\tilde{\rho}_{\text{ext}}(x,t)+x_2\|_{H^3}\le C+\epsilon,\quad \left\|\lambda \tilde{\rho}_{\lambda, \text{inter}}\left(\frac{x}{\lambda}, t\right)\right\|_{H^3}\leq C +\epsilon,
\end{align}
for some constant $C>0$.
We will first show that, during the time when this holds, we can control the support of $\tilde{\rho}_{\lambda, \text{inter}},\tilde{\rho}_{\text{ext}}$.
\subsubsection{Control of the support of $\tilde\rho_{\lambda, \text{inter}}, \tilde \rho_{\text{ext}}$}\label{sec:support}
Let us introduce the support size
\begin{align}
    \mathcal{S}_\lambda^{inf} (t):=\inf \{ \mu \in \mathbb{R}_+\, : \, \tilde \rho_{\lambda, \text{inter}} (t, x_1, x_2)=0 \; \text{for all} \; |x_{1}|^2+|x_{2}|^2\ge \mu^2\}. 
\end{align}
Denoting by $\mu_0 \in \R_+$ the size of the support of $f(x)$, at the initial time we have
\begin{align}
    \mathcal{S}_\lambda^{inf}(0)=\frac{\mu_0}{\lambda}.
\end{align}
Let $\tilde \Phi^\lambda (x, t)=(\tilde \phi_1^\lambda (x, t), \tilde{\phi}_2^\lambda (x, t))$ be the flow map associated with the equation for $\tilde \rho_{\lambda, \text{inter}}$. We have
\begin{align}
    {\tilde \Phi}^\lambda (x, t)=  x + \int_0^t \mathbf{u}[\tilde \rho_{\lambda, \text{inter}} + \tilde \rho_{\text{ext}}](\tilde\Phi^\lambda (x, s), s) \, d s.
\end{align}
Then, by the Cauchy-Lipschitz Theorem, and since $u[\tilde \rho_{\lambda, \text{inter}} + \tilde \rho_{\text{ext}}](x=0)=0$,
\begin{align}\label{eq:support-tildelambda}
  \frac{\mu_0}{\lambda} \text{e}^{-t\| \mathbf{u}[\tilde \rho_{\lambda, \text{inter}} + \tilde \rho_{\text{ext}}]\|_{C^1}} \le \mathcal{S}_\lambda^{inf}(t) \le  \frac{\mu_0}{\lambda} \text{e}^{t\| \mathbf{u}[\tilde \rho_{\lambda, \text{inter}} + \tilde \rho_{\text{ext}}]\|_{C^1}}.
\end{align}
Using that, for $\lambda\geq 1$
$$\|f(x, t)\|_{C^1} \leq \left\|\lambda f \left( \frac{x}{\lambda}, t\right)\right\|_{C^1},$$
we can bound
\begin{align*}
    \|\mathbf{u}[\tilde \rho_{\lambda, \text{inter}} + \tilde \rho_\text{ext}](x, t) \|_{C^1} & \le  \|\mathbf{u}[\tilde \rho_{\lambda, \text{inter}}](x, t) \|_{C^1}+ \|\mathbf{u}[\tilde \rho_{\text{ext}}](x, t)\|_{C^1} \\
    &\le  \left\|\lambda\mathbf{u}[\tilde \rho_{\lambda, \text{inter}}]\left(\frac{x}{\lambda}, t\right) \right\|_{C^1}+ \|\mathbf{u}[\tilde \rho_{\text{ext}}](x, t)\|_{C^1} \\
    & \le \left\|\lambda\tilde \rho_{\lambda, \text{inter}}\left(\frac{x}{\lambda}, t\right) \right\|_{H^3}+\|\mathbf{u}[\tilde \rho_{\text{ext}}+x_2](x, t)+ \mathbf{u}[-x_2]\|_{C^1}
    \le C ,
\end{align*}
where in the latter we used that $\mathbf{u}[-x_2]=0$, the embedding $H^3(\mathbb R^2) \hookrightarrow C^1 (\mathbb R^2)$ and \eqref{ineq:shortime}. From \eqref{eq:support-tildelambda}, this implies that
\begin{align}\label{eq:support-tildelambda-1}
     \frac{\mu_0}{\lambda} \text{e}^{-t(C+\eps)} \le \mathcal{S}_\lambda^{inf}(t) \le  \frac{\mu_0}{\lambda}\text{e}^{t(C+\eps)},
\end{align}
where $\mu_0$ is fixed, while $\lambda$ can be chosen big enough.
Similarly, 
introducing
\begin{align}\label{eq:supp-sup}
    \mathcal{S}^{sup}(t):= \sup \{ \mu \in \R_+ \, : \, \tilde \rho_{\text{ext}} (t, x_1, x_2)=0 \; \text{for all} \; |x_1|^2+|x_2|^2\le \mu^2\},
\end{align}
at the initial time, by Lemma \ref{initialdata}, we have
\begin{align*}
    \mathcal{S}^{sup}(0) = \delta_0.
\end{align*}
Again, by the Cauchy-Lipschitz Theorem, this is is bounded at later times by 
\begin{align*}
     \mathcal{S}^{sup}(t) \ge \delta_0 \text{e}^{- (C+\eps) t}.
\end{align*}
We would like to ensure separation of the supports of $\tilde \rho_{\text{ext}} (x,t)$ and $\tilde \rho_{\lambda, \text{inter}} (x, t)$ for all times $t \in [0,T]$. This amount at choosing $\lambda$ big enough to satisfy
\begin{align}
    \delta_0 \text{e}^{- (C+\eps) t} > \frac{\mu_0}{\lambda} \text{e}^{ (C+\eps) t}, \quad t \in [0, T].
\end{align}
This can be easily arranged by taking $\lambda$ big.
\subsubsection{Estimating $\|\tilde \rho_{\text{ext}} - \rho\|_{H^3}$.}
In the course of the proof, we drop the subscripts ``inter, ext''. 
Consider now the equation for the difference
\begin{align}
    \de_t (\tilde \rho - \rho)+\mathbf{u}[\tilde \rho_\lambda + \tilde \rho] \cdot \nabla (\tilde \rho - \rho) + \mathbf{u}[\tilde \rho_\lambda + \tilde \rho-\rho] \cdot \nabla \rho = 0.
\end{align}
Multiplying by $(\tilde \rho - \rho)$, we obtain
\begin{align}
    \frac 12 \frac{d}{d t}\|\tilde \rho - \rho\|^2_{L^2} + (\mathbf{u}[\tilde \rho_\lambda + \tilde \rho-\rho] \cdot \nabla \rho, \tilde \rho - \rho)_{L^2}=0.
\end{align}
Differentiating in space, we get
\begin{align}
    &\de_t D^j (\tilde \rho - \rho) + \mathbf{u}[\tilde \rho_\lambda + \tilde \rho] \cdot \nabla D^j (\tilde \rho - \rho) + [D^j, \bold \mathbf{u}[\tilde \rho_\lambda + \tilde \rho] ] \cdot \nabla (\tilde \rho - \rho)
    +\mathbf{u}[\tilde \rho_\lambda + \tilde \rho-\rho] \cdot \nabla D^j \rho\notag\\ &\quad + [D^j, \mathbf{u}[\tilde \rho_\lambda + \tilde \rho-\rho]]\cdot \nabla \rho=0. 
\end{align}
We want to get an estimate of the difference in $H^3$ (notice that if $j=3$, then the second-to-last addend above contains the term $\nabla D^3 \rho$, whose control needs $D^4\rho$ in $L^2$).
Taking the scalar product against $D^j (\tilde \rho - \rho)$,
\begin{align*}
\frac{d}{d t}\|D^j (\tilde \rho - \rho)\|_{L^2}^2 &\lesssim \|[D^j, \mathbf{u}[\tilde \rho_\lambda + \tilde \rho-\rho]]\cdot \nabla (\tilde \rho - \rho)\|_{L^2} \| D^j(\tilde \rho - \rho)\|_{L^2}\\
&\quad +\|[D^j, \mathbf{u}[\rho]]\cdot \nabla (\tilde \rho - \rho)\|_{L^2} \| D^j(\tilde \rho - \rho)\|_{L^2}\\
&\quad +  \|[D^j, \mathbf{u}[\tilde \rho_\lambda + \tilde \rho-\rho]]\cdot \nabla  \rho\|_{L^2} \| D^j(\tilde \rho - \rho)\|_{L^2}\\
& \quad + |(\mathbf{u}[\tilde \rho_\lambda + \tilde \rho-\rho] \cdot \nabla D^j \rho, D^j(\tilde \rho - \rho))_{L^2}|\\
& \lesssim \|[D^j, \mathbf{u}[\tilde \rho_\lambda]]\cdot \nabla (\tilde \rho - \rho)\|_{L^2} \| D^j(\tilde \rho - \rho)\|_{L^2}+\|[D^j, \mathbf{u}[\tilde \rho-\rho]]\cdot \nabla (\tilde \rho - \rho)\|_{L^2} \| D^j(\tilde \rho - \rho)\|_{L^2}\\
&\quad +\|[D^j, \mathbf{u}[\rho]]\cdot \nabla (\tilde \rho - \rho)\|_{L^2} \| D^j(\tilde \rho - \rho)\|_{L^2}\\
&\quad +  |([D^j, \mathbf{u}[ \tilde\rho_\lambda]]\cdot \nabla  \rho, D^j(\tilde \rho - \rho))_{L^2}| +  \|[D^j, \mathbf{u}[\tilde \rho-\rho]]\cdot \nabla  \rho\|_{L^2} \| D^j(\tilde \rho - \rho)\|_{L^2}\\
& \quad + |(\mathbf{u}[\tilde \rho_\lambda] \cdot \nabla D^j \rho, D^j(\tilde \rho - \rho))_{L^2}|+ |(\mathbf{u}[\tilde \rho-\rho] \cdot \nabla D^j \rho, D^j(\tilde \rho - \rho))_{L^2}|.
\end{align*}
To bound the third-to-last line, we use that
\begin{align*}
    \mathbf{u}[\rho]=\mathbf{u}[\rho+x_2].
\end{align*}
Relying on the boundedness of (composition of) Riesz Transform in $L^2$-based spaces and recalling that $\rho + x_2 \in H^4$ yields 
\begin{align}
\frac{d}{d t}\|D^j (\tilde \rho - \rho)\|_{L^2}^2 
& \lesssim (\|\rho+x_2\|_{H^3}+\|\tilde \rho-\rho\|_{H^3}+\|\tilde \rho_\lambda\|_{H^3})\|\tilde \rho-\rho\|_{H^3}^2 +  |([D^j, \mathbf{u}[ \tilde \rho_\lambda]]\cdot \nabla  \rho, D^j(\tilde \rho - \rho))|\notag\\
&\quad + |(\mathbf{u}[\tilde \rho_\lambda] \cdot \nabla D^j \rho, D^j(\tilde \rho - \rho))_{L^2}| + |(\mathbf{u}[\tilde \rho-\rho] \cdot \nabla D^j \rho, D^j(\tilde \rho - \rho))_{L^2}|.
\label{eq:effective}
\end{align}
Among the remaining terms above, there is in particular
\begin{align*}
    ([D^j, \mathbf{u}[ \tilde \rho_\lambda]]\cdot \nabla  \rho, D^j(\tilde \rho - \rho)).
\end{align*}
We notice, thanks to Leibniz rule and the $\lambda$-scaling, that the potentially most dangerous among the above terms is 
\begin{align}\label{eq:addends-new}
    |(D^j \bold u [\tilde \rho_\lambda] \cdot \nabla \rho, D^j (\tilde \rho-\rho))_{L^2}|
    \lesssim \| D^j u_1[\tilde \rho_\lambda] \de_{x_1} \rho\|_{L^2} \| \tilde \rho - \rho\|_{H^3}+\| D^j u_2[\tilde \rho_\lambda] \de_{x_2} \rho\|_{L^2} \| \tilde \rho - \rho\|_{H^3}.
\end{align}
We treat the two terms above separately.
We recall that by assumption, the support $\text{supp}(\rho (x, \cdot)) \cap B_\delta (0)=\emptyset$. Moreover, we know from Section \ref{sec:support} that $\text{supp}(\tilde\rho_\lambda (x, \cdot)) \cap (\R^2\setminus B_{c\lambda^{-1}} (0))=\emptyset$ for some constant $c>0$. These observations imply that we can simply estimate
\begin{align}\label{eq:estu-supp-new}
    \| D^j u_1[\tilde \rho_\lambda]\|_{L^2(\R^2\setminus B_\delta(0))} & = \|D^j H_1 \star \tilde \rho_\lambda\|_{L^2(\R^2\setminus B_\delta(0))}  \lesssim \left\| \int_{B_{c\lambda^{-1}}(0)}\frac{1}{|x-y|^{2+j}} |\tilde \rho_\lambda (y, \cdot)| \, d y_1 \, d y_2 \right\|_{L^2(\R^2\setminus B_\delta(0))},
\end{align}
where we noticed that the kernel $H_1$ in \eqref{eq:kernels} satisfies 
\begin{align}
    |D^j H_1 (x-y)| \lesssim \frac{1}{|x-y|^{2+j}}.
\end{align}
By the elementary inequality
\begin{align}
    |x-y| \ge \|x|-|y\| \ge \delta - c\lambda^{-1} \ge \frac{\delta}{2}
\end{align}
for any given $\delta>0$ and for all $\lambda>0$ large enough, and using Young convolution inequality
\begin{align*}
    \| D^j u_1[\tilde \rho_\lambda]\|_{L^2(\R^2\setminus B_\delta(0))}&  \lesssim C(\delta) \|\tilde \rho_\lambda\|_{L^2}.
\end{align*}
This gives the estimate
\begin{align}
\| D^j u_1[\tilde \rho_\lambda] \de_{x_1} \rho\|_{L^2(\R^2\setminus B_\delta (0))} & \lesssim \| D^j u_1[\tilde \rho_\lambda]\|_{L^2(\R^2\setminus B_\delta (0))} \|\de_{x_1}\rho\|_{L^\infty}\lesssim C(\delta) \|\tilde \rho_\lambda\|_{L^2}\|\nabla\rho\|_{L^\infty}
\end{align}
and, recalling that the initial data satisfy $\|\tilde \rho_\lambda (x, 0)\|_{L^2} = \lambda^{-1} \| f(\lambda x)\|_{L^2} \lesssim \lambda^{-2}$, thanks to the conservation of the $L^2$ norm of $\tilde \rho_\lambda$ we have
\begin{align}
    \|\tilde \rho_\lambda\|_{L^2} \lesssim \lambda^{-2},
\end{align}
yielding the following estimate
\begin{align}
     \| D^j u_1[\tilde \rho_\lambda] \de_{x_1} \rho\|_{L^2} \|\tilde \rho-\rho\|_{L^2} & \lesssim C(\delta) \|\tilde \rho-\rho\|_{L^2}\lambda^{-2},
\end{align}
where, writing
\begin{align*}
    \de_{x_1}\rho&=\de_{x_1}(\rho + x_2),
\end{align*}
we used Sobolev embedding
\begin{align*}
    \|\de_{x_1}(\rho + x_2)\|_{L^\infty} \lesssim \|\rho+x_2\|_{H^4},
\end{align*}
and the boundedness of $\|\rho+x_2\|_{H^4}$.

The second addend in \eqref{eq:addends-new}, which involves $D^ju_2[\tilde \rho_\lambda]$, can be bounded exactly in the same way.

For the last two terms, once again we write
\begin{align*}
    \nabla D^j \rho = \nabla D^j (\rho+x_2) - D^j \begin{pmatrix}
        0\\
        1
    \end{pmatrix},
\end{align*}
so that 
\begin{align*}
    |(\mathbf{u}[\tilde \rho_\lambda] \cdot \nabla D^j \rho, D^j(\tilde \rho - \rho))_{L^2}| + |(\mathbf{u}[\tilde \rho-\rho] \cdot \nabla D^j \rho, D^j(\tilde \rho - \rho))_{L^2}|\\ \lesssim (\|\tilde \rho_\lambda\|_{L^2}+\|\tilde \rho - \rho\|_{L^2}) \|\tilde \rho - \rho\|_{H^3} (1+\|\rho+x_2\|_{H^4}).  
\end{align*}
Altogether,
\begin{align}
\frac{d}{d t}\|D^j (\tilde \rho - \rho)\|_{L^2}^2 
& \lesssim \|\rho+x_2\|_{H^3}\|\tilde \rho-\rho\|_{H^3}^2 + \lambda^{-2} (1+\|\rho+x_2\|_{H^4})\|\tilde \rho - \rho\|_{H^3} 
\end{align}
Integrating in time and applying Grönwall's inequality yields
\begin{align}\label{est:H3unscaled}
    \|\tilde \rho - \rho\|_{H^3}^2 \lesssim  \text{e}^{C(\delta, \|\rho+x_2\|_{L^\infty_t H^4}) t} \lambda^{-2} t,
\end{align}
where $C(\delta, \|\rho+x_2\|_{L^\infty_t H^4})$ denotes a constant depending on $\delta, \|\rho+x_2\|_{L^\infty_t H^4}$.
\subsubsection{Estimate of the difference $\left\|\lambda \tilde{\rho}_{\lambda,  \text{inter}}\left(\frac{x}{\lambda}, t\right)-\lambda \bar{\rho}_{\lambda,  \text{inter}}\left(\frac{x}{\lambda}, t\right)\right\|_{H^3}$.}
We drop the subscript ``inter'' for simplicity.
Consider the scaled unknowns
\begin{align}\label{eq:tilder}
    \tilde r(x, \cdot):= \lambda\tilde \rho_\lambda \left(\frac{x}{\lambda}, \cdot \right), \quad \bar r(x, \cdot):= \lambda\bar \rho_\lambda \left(\frac{x}{\lambda}, \cdot \right).
\end{align}
They satisfy the equations
\begin{align}
    \de_t \bar r + \bold u [\bar r] \cdot \nabla \bar r +  (\de_{x_1}u_1 [\rho])(x=0, t) (x_1, - x_2) \cdot \nabla \bar r&=0,   \\
    \de_t \tilde r + \bold u [\tilde r] \cdot \nabla \tilde r +  \lambda \bold u \left[\tilde \rho \left(\frac{x}{\lambda}\right)\right] \cdot \nabla \tilde r&=0.
\end{align}
Taking the difference yields
\begin{align*}
    \de_t (\bar r- \tilde r) + \bold u [\bar r] \cdot \nabla (\bar r-\tilde r) + (\bold u[\bar r]-\bold u [\tilde r]) \cdot \nabla \tilde r + (\de_{x_1}u_1 [\rho])(x=0, t) (x_1, -x_2) \cdot \nabla \bar r -  \lambda \bold u \left[\tilde \rho \left(\frac{x}{\lambda}\right)\right] \cdot \nabla \tilde r=0.
 \end{align*}
We can then use that $\tilde r=(\tilde r - \bar r) + \bar r$ and write the equation as
\begin{align}\label{eq:diffscaled}
    \de_t (\bar r- \tilde r) &+ \bold u [\bar r] \cdot \nabla (\bar r-\tilde r) + (\bold u[\bar r]-\bold u [\tilde r]) \cdot \nabla [(\tilde r-\bar r)+\bar r] - \lambda \bold u \left[\tilde \rho \left(\frac{x}{\lambda}\right)\right] \cdot \nabla (\tilde r-\bar r)\notag\\
    & -\lambda \bold u \left[\tilde \rho \left(\frac{x}{\lambda}\right)\right] \cdot \nabla \bar r + (\de_{x_1}u_1 [\rho])(x=0, t) (x_1, -x_2) \cdot \nabla \bar r =0.
 \end{align}

Taking a $D^3$ derivative, multiplying by $D^{3}(\bar r - \tilde r)$,  the first three terms are easily taken into account by a term of the form
$$\|\bar{r}-\tilde{r}\|^2_{H^3}(\|\bar{r}-\tilde{r}\|_{H^3}+\|\bar{r}\|_{H^4}+\|\tilde{\rho}\|_{H^3}).$$
The last two terms can be written as
\begin{align}
     \de_{x_1} u_1 [\rho] (x_1, -x_2) \cdot \nabla \bar r - \lambda \bold u \left[\tilde \rho \left(\frac{x}{\lambda}\right)\right] \cdot \nabla \bar r&=  \de_{x_1} u_1 [\rho-\tilde \rho] (x_1, -x_2) \cdot \nabla \bar r\notag\\
     &\quad + \left( \de_{x_1} u_1 [\tilde \rho] \left({x_1}, -{x_2}\right) -  \lambda \bold u \left[\tilde \rho \left(\frac{x}{\lambda}\right)\right]\right)\cdot \nabla \bar r\notag\\
     &=\mathrm{I}_1 + \mathrm{I}_2.
\end{align}
First, relying on \eqref{est:nonscaled-new}, the control of the support of $\rho (x, t)$ and $\tilde \rho (x, t)$ in Section \ref{sec:support} - that is at least $\delta$-far from 0 - and using Cauchy-Schwarz,
\begin{align}
    |(\mathrm{I}_1, \bar r - \tilde r)_{L^2}|& \lesssim \lambda^{-1} \|\nabla \bar r\|_{L^{\infty}} \|\bar r - \tilde r\|_{L^2},
\end{align}
and
\begin{align}
    |(\mathrm{I}_1, \bar r - \tilde r)_{H^3}|& \lesssim \lambda^{-1}\|\bar r\|_{H^4} \|\bar r - \tilde r\|_{H^3},
\end{align}
where $\|\bar r\|_{H^4}$ is uniformly bounded by hypothesis.

To deal with $\mathrm{I}_2$, we perform a Taylor expansion at first order of the velocity. That is, using the symmetry to cancel the first (0-th order) term, we write, for $x\in\text{supp}(\bar{r})$
\begin{align*}
\left|(\de_{x_1} u_1 [\tilde \rho](0,t)) \cdot \left({x_1}, -{x_2}\right)^t - \lambda \bold u \left[\tilde \rho \left(\frac{\cdot}{\lambda}\right)\right] (x,t)\right|&=\left|(\de_{x_1} u_1 [\tilde \rho](0,t)) \cdot \left({x_1}, -{x_2}\right)^t -  \lambda \bold u \left[\tilde \rho \left(\cdot\right)\right] \left(\frac{x}{\lambda},t\right)\right|\\
&\leq C\|\mathbf{u}[\tilde{\rho}]\|_{C^2(B_{\frac{c}{\lambda}}(0))}\frac{|x|^2}{\lambda}.
\end{align*}
We then have
\begin{align*}
    |(\mathrm{I}_2, \bar r - \tilde r)_{L^2}|& \lesssim 
    \|\bar r - \tilde r\|_{L^2} \sum_{j=1}^2 \lambda^{-1}\|\mathbf{u}[\tilde{\rho}]\|_{C^2}\left\| x_j^2 \de_{x_j} \bar r \right\|_{L^2} \\
    & \lesssim \lambda^{-1} \|\mathbf{u}[\tilde{\rho}]\|_{C^2(B_{\frac{c}{\lambda}}(0))}\|\bar r - \tilde r\|_{L^2}  \|\nabla \bar r\|_{L^2} \|x|^2\text{diam}(\supp(f(x))|^2 \lesssim \lambda^{-1} \|\bar r - \tilde r\|_{L^2},
\end{align*}
\begin{align*}
    |(D^{3}(\mathrm{I}_2),D^{3} (\bar r - \tilde r))_{L^2}|& \lesssim 
    \|\bar r - \tilde r\|_{H^3} \sum_{j=1}^2 \lambda^{-1}\|\mathbf{u}[\tilde{\rho}]\|_{C^3}\left\| x_j^2 \de_{x_j} \bar r \right\|_{H^3}\\
    & \lesssim \lambda^{-1} \|\mathbf{u}[\tilde{\rho}]\|_{C^3(B_{\frac{c}{\lambda}}(0))}\|\bar r - \tilde r\|_{H^3}  \|\bar r\|_{H^4} \|x|^2\text{diam}(\supp(f(x))|^2 \lesssim \lambda^{-1} \|\bar r - \tilde r\|_{H^3},
\end{align*}
where we used that $\bar r$ is uniformly bounded in $H^4$ by hypothesis, $f$ is compactly supported and, since 
$\mathbf{u}[\tilde{\rho}]=\mathbf{u}[\tilde{\rho}+x_2]$, then, by using a similar argument about separation of supports as in \eqref{eq:estu-supp-new} we get
\begin{align*}
    \|\mathbf{u}[\tilde{\rho}]\|_{C^2(B_{\frac{c}{\lambda}}(0))} \lesssim \|\mathbf{u}[\tilde{\rho}+x_2]\|_{C^2(B_{\frac{c}{\lambda}}(0))}\lesssim\|\tilde{\rho}+x_2\|_{L^2}.
\end{align*}
%
\subsubsection{Estimate of $\|\tilde \rho_{\text{ext}}(x, t)+x_2\|_{H^4}$ and $\left\|\lambda \tilde{\rho}_{\lambda, \text{inter}}\left(\frac{x}{\lambda}\right)\right\|_{H^4}$.}
Writing
\begin{align*}
\tilde \rho_{\text{ext}}-\rho = (\tilde \rho_{\text{ext}}+x_2)-(\rho+x_2), 
\end{align*}
we deduce that $\tilde \rho_{\text{ext}}+x_2$ is bounded in $H^3$ by the first inequality in \eqref{est:nonscaled-new} and by the fact that $\rho+x_2$ is bounded in $H^4$ by assumption. Now, let us look again at the equation for $\tilde \rho_{\text{ext}}$ in \eqref{eq:sum1} and notice that $\tilde \rho_{\text{ext}}+x_2$ solves
\begin{align*}
\partial_{t}(\tilde{\rho}_{\text{ext}}(x,t)+x_2)+\mathbf{u}[\tilde{\rho}_{\lambda, \text{inter}}+\tilde{\rho}_{\text{ext}}+x_2]\cdot \nabla (\tilde{\rho}_{\text{ext}}(x,t)+x_2)=u_2[\tilde{\rho}_{\text{ext}}(x,t)+x_2].
\end{align*}
Then, we have
\begin{align*}
    \frac{d}{dt}\|\tilde \rho_{\text{ext}}+x_2\|_{H^4}^2 \lesssim (\|\bold u [\tilde \rho_{\lambda, \text{inter}}]\|_{H^4(\R^2\setminus B_\delta(0))}+\|\tilde \rho_{\text{ext}}+x_2\|_{H^3}) \|\tilde \rho_{\text{ext}}+x_2\|_{H^4}^2,
\end{align*}
where $\|\bold u [\tilde \rho_{\lambda, \text{inter}}]\|_{H^4(\R^2\setminus B_\delta(0))}$ is uniformly bounded by \eqref{eq:estu-supp-new} and $\|\tilde \rho_{\text{ext}}+x_2\|_{H^3}$ is also uniformly bounded, as just observed. The bound follows by Grönwall inequality.
The same approach as before applies also for $\left\|\lambda \tilde{\rho}_{\lambda, \text{inter}}\left(\frac{x}{\lambda}\right)\right\|_{H^4}$, 
writing the equation for $\tilde{\rho}_{\lambda, \text{inter}}$ in the form
\begin{align*}
\partial_{t}\tilde{\rho}_{\lambda, \text{inter}}(x,t)+\bold{u}[\tilde{\rho}_{\lambda, \text{inter}}+\tilde{\rho}_{\text{ext}}+x_2]\cdot \nabla \tilde{\rho}_{\lambda, \text{inter}}=0,
\end{align*}
and using that $\| \tilde{ \rho}_{\lambda, \text{inter}}\|_{H^3}$ and $\|\tilde \rho_{\text{ext}}+x_2\|_{H^4}$ are uniformly bounded. Continuity in time  $C([0,T]; H^4 (\R^2))$ then follows by standard results from well posedness in $H^{4}$ for IPM.
This concludes the proof.
%
%
%
\end{proof}


The simplified equation \eqref{simp1} in Lemma \ref{gluing1} remains challenging to solve as, a priori, we grapple with the difficulty introduced by the quadratic term in \eqref{simp1}. In the next lemma, we will obtain some properties for an approximation of \eqref{eq:ipm}.
\begin{lem}\label{gluing2}
   Given a constant $M>0$, a time $T>0$ and $k(t)<0$ for all $t\in[0,T]$ such that 
   $M\geq \int_{0}^{T}|k(t)| \, d t$,
   and $\rho_{\text{in}}(x)\in H^{4}$ compactly supported, then there exist  $C,a_{0}>0$ such that, for $a\leq a_{0}$, if we define
\begin{align}
    \de_t \tilde{\rho} + (k(t)(x_{1},-x_{2})) \cdot \nabla \tilde{\rho}&=0,\notag \\
    \de_t \bar{\rho} + (k(t)(x_{1},-x_{2})+\bold u [\bar{\rho}]) \cdot \nabla \bar{\rho}&=0,\notag \\
    \bar{\rho}(x,t=0)=\tilde{\rho}(x,t=0)&=a\rho_{\text{in}}(x),\notag \\
\end{align}
then, for $t\in[0,T]$, we have $\|\tilde{\rho}-\bar{\rho}\|_{H^3}\leq C a^2$, and furthermore $\de_{t}\|\tilde{\rho}-\bar{\rho}\|_{H^3}\leq C a^2.$
\end{lem}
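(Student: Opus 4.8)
The plan is to treat $\bar{\rho}$ as a small perturbation of the explicit solution $\tilde{\rho}$: the two initial data coincide, and the only discrepancy between the two evolution equations is the quadratic term $\mathbf{u}[\bar{\rho}]\cdot\nabla\bar{\rho}$. Once one checks that both $\tilde{\rho}$ and $\bar{\rho}$ remain of size $O(a)$ in a sufficiently high Sobolev norm, this source is of size $O(a^2)$, and the claim follows from a Grönwall argument for $w:=\tilde{\rho}-\bar{\rho}$. Concretely, I would first record the a priori bounds. The function $\tilde{\rho}$ is given explicitly by $\tilde{\rho}(x,t)=a\,\rho_{\text{in}}(\Phi^{-1}(x,t))$, with $\Phi(x,t)=\big(x_1\mathrm{e}^{\int_0^t k(\tau)\,d\tau},\,x_2\mathrm{e}^{-\int_0^t k(\tau)\,d\tau}\big)$ the flow of $k(t)(x_1,-x_2)$; since $\det D\Phi(\cdot,t)=1$ and the singular values of $D\Phi(\cdot,t)$ lie in $[\mathrm{e}^{-M},\mathrm{e}^M]$, a change of variables gives $\|\tilde{\rho}(t)\|_{H^4}\le\mathrm{e}^{4M}\|a\rho_{\text{in}}\|_{H^4}$ on $[0,T]$, and $\tilde{\rho}(\cdot,t)$ stays supported in a fixed ball. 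For $\bar{\rho}$, local well-posedness in $H^4$ holds as for \eqref{eq:ipm} (the extra linear transport term $k(t)(x_1,-x_2)\cdot\nabla$ being tame), and the energy estimate takes the form
\[
\tfrac{d}{dt}\|\bar{\rho}\|_{H^4}^2\lesssim|k(t)|\,\|\bar{\rho}\|_{H^4}^2+\|\bar{\rho}\|_{H^4}^3,
\]
since the top-order contributions of both $k(t)(x_1,-x_2)\cdot\nabla\bar{\rho}$ and $\mathbf{u}[\bar{\rho}]\cdot\nabla\bar{\rho}$ vanish by incompressibility, the commutator with the affine field $(x_1,-x_2)$ yields only $\lesssim|k(t)|\,\|\bar{\rho}\|_{H^4}^2$, and the IPM commutator is handled in the usual way using that $\mathbf{u}$ is a bounded operator of order $0$ and $H^4(\R^2)\hookrightarrow W^{1,\infty}(\R^2)$. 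A standard continuity/bootstrap argument then shows that, for $a$ below some $a_0$ depending on $M$, $T$, $\|\rho_{\text{in}}\|_{H^4}$ (and $\int_0^T|k|$), the solution $\bar{\rho}$ exists on $[0,T]$, stays compactly supported in a fixed ball, and satisfies $\|\bar{\rho}(t)\|_{H^4}\le C_1 a$ for $t\in[0,T]$.

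Next I would write the equation for the difference $w:=\tilde{\rho}-\bar{\rho}$, which by subtraction is
\[
\partial_t w+k(t)(x_1,-x_2)\cdot\nabla w=\mathbf{u}[\bar{\rho}]\cdot\nabla\bar{\rho}=:F,\qquad w(\cdot,0)=0,
\]
with $w(\cdot,t)$ compactly supported. Because $\mathbf{u}$ is bounded of order $0$ and $H^3(\R^2)$ is a Banach algebra, the source is genuinely quadratic:
\[
\|F(t)\|_{H^3}\lesssim\|\mathbf{u}[\bar{\rho}]\|_{H^3}\,\|\nabla\bar{\rho}\|_{H^3}\lesssim\|\bar{\rho}\|_{H^4}^2\lesssim C_1^2 a^2\qquad(t\in[0,T]).
\]
For the $H^3$ estimate of $w$ I would run the energy method: applying $D^j$ for $|j|\le 3$, testing against $D^j w$ and summing, the top-order transport term vanishes by incompressibility and the commutator with $(x_1,-x_2)$ contributes only $\lesssim|k(t)|\,\|w\|_{H^3}$, so that
\[
\tfrac{d}{dt}\|w\|_{H^3}^2\lesssim|k(t)|\,\|w\|_{H^3}^2+\|F(t)\|_{H^3}\,\|w\|_{H^3}.
\]
Since $w(\cdot,0)=0$, $\|F\|_{H^3}\lesssim a^2$ and $\int_0^T|k|\le M$, Grönwall gives $\|w(t)\|_{H^3}\le C a^2 t\le C a^2 T$ on $[0,T]$, which is the first claimed bound. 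For the time derivative I would divide the last inequality by $2\|w\|_{H^3}$:
\[
\partial_t\|w\|_{H^3}\lesssim|k(t)|\,\|w\|_{H^3}+\|F(t)\|_{H^3}\lesssim|k(t)|\,C a^2 T+C_1^2 a^2\lesssim a^2,
\]
where in the last step the constant is also allowed to depend on $\sup_{[0,T]}|k|$, which is finite since $k$ is continuous in every application of this lemma.

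The main difficulty is twofold. First, to reach the $O(a^2)$ bound rather than a mere $O(a)$ bound, one must propagate the high-regularity a priori bound $\|\bar{\rho}(t)\|_{H^4}\lesssim a$ of the first step — this is exactly where the hypothesis $\rho_{\text{in}}\in H^4$ is used — so that the quadratic source $F$ is genuinely $O(a^2)$ in $H^3$ and not just $O(a)$. Second, the time-derivative bound is delicate near $t=0$, where $\|w\|_{H^3}$ vanishes; dividing the energy inequality by $\|w\|_{H^3}$ is legitimate only because its right-hand side carries an overall factor $\|w\|_{H^3}$ (one power of $\|w\|_{H^3}$ in each term), which is why one keeps that inequality in the displayed form instead of absorbing the cross term via Young's inequality.
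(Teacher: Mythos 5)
Your proposal is correct and follows essentially the same route as the paper: derive an a priori $O(a)$ bound on $\|\bar\rho\|_{H^4}$ via an energy estimate and Grönwall/bootstrap, write the equation for $w=\tilde\rho-\bar\rho$ whose only source is the quadratic term $\mathbf{u}[\bar\rho]\cdot\nabla\bar\rho$, bound that source by $O(a^2)$ in $H^3$, and conclude with Grönwall; the time-derivative bound is then read off from the same differential inequality. The extra detail you add (explicit flow map for $\tilde\rho$, explicit mention that $k$ must be bounded pointwise for the $\partial_t\|w\|_{H^3}$ estimate, which holds in every application since $k$ is smooth there) is consistent with, but not different from, what the paper does implicitly.
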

\begin{proof}
We start by noting that, taking derivatives $D^{3}$ and multiplying by $D^{3}\bar{\rho}$ in the evolution equation for $\rho$ we get
$$\frac{1}{2}\frac{d}{d t}\|\bar \rho\|_{H^4}^2 \lesssim  |k(t)|\|\bar \rho\|^{2}_{H^4}+\|\bar \rho\|^{3}_{H^4} $$
so using $\|\bar{\rho}(x,0)\|_{H^4}\leq Ca$ a Gronwall type estimate gives, if $a$ is small enough that
$$\|\bar \rho\|_{H^4}\leq e^{2M}\|\bar{\rho}(x,0)\|_{H^4}=Ca.$$
    If we now consider the equation for the difference
    \begin{align}
        \de_t (\tilde \rho-\bar \rho) +  (k(t)(x_{1},-x_{2})) \cdot \nabla (\tilde \rho-\bar \rho) - \bold u (\bar \rho) \cdot \nabla \bar \rho =0,
    \end{align}
    and again we take derivatives $D^{3}$ and the scalar product against $D^{3}(\tilde \rho - \bar \rho)$,  this yields
    \begin{align}
        \frac{1}{2}\frac{d}{d t}\|\tilde \rho-\bar \rho\|_{H^3}^2 &\lesssim |k(t)|\|\tilde \rho-\bar \rho\|^{2}_{H^3}+\|\bold u (\bar \rho) \cdot \nabla \bar \rho \|_{H^3} \|\tilde \rho-\bar \rho\|_{H^3} \notag \\
        &\lesssim  |k(t)|\|\tilde \rho-\bar \rho\|^{2}_{H^3}+\|\bar \rho \|^2_{H^4} \|\tilde \rho-\bar \rho\|_{H^3}.
    \end{align}
    %
    %
    %
    %
    %
    %
    which, again, by a Gronwall type estimate, yields the desired estimate
    \begin{align}
        \|\tilde \rho-\bar \rho\|_{H^4}\lesssim C a^2,  
    \end{align}
    and going back to the evolution equation for the $H^3$ norm gives the desired estimate for the derivative of the norm.
    %
    %
    %
\end{proof}
%

\begin{lem}\label{inductiondeform}
    Given $\epsilon_{1},\epsilon_{2},\epsilon_{3},T,M,K,d>0$, with $K>1$,  $\epsilon_{2},\epsilon_{3} <1$, and a symmetric solution $\rho(x,t)\in C^\infty$ to \eqref{eq:ipm} for $t\in[0,T]$, with stable initial conditions \eqref{eq:data-stable}, such that $\rho(x,t)+x_2 \in H^4$ and $\supp(\rho(x,t))\cap B_{\delta}(0)=\emptyset$ for some $\delta>0$, and
    \begin{align*}
        k(t)&=\de_{x_{1}}u_1[\rho](x=0,t)<0,\quad -\int_{0}^{T}k(t) \, d t \leq M,\\
        \de_{t}k(t)&>-d, \quad \|\rho(x,t)+x_2\|_{C^1}<K, \quad \|\rho(x,t=0)+x_2\|_{H^2}< \epsilon_{1}.
    \end{align*}
    Then, we can find a function $\rho_{pert}(x, 0)$ with 
    $$\supp(\rho(x,0))\cap\supp (\rho_{pert}(x, 0))=\emptyset,$$ 
    such that the solution $\rho_{new}(x,t)$ to \eqref{eq:ipm} with initial conditions $\rho(x,t=0)+\rho_{pert}(x, 0)$ is symmetric,  $ \rho_{new}(x, t)+x_2 \in H^4$,  $\supp(\rho_{new}(x,t))\cap B_{\delta_{new}}(0)=\emptyset$ for some $\delta_{new}>0$, and
    \begin{align}
        k_{new}(t):&=\de_{x_{1}}u_1[\rho_{new}](x=0,t)<0,\quad  \int_{0}^{T}k_{new}(t)\, d t \leq \int_{0}^{T}k(t)\, d t-\epsilon_{2}c_{M},\quad 
        \de_{t}k_{new}(t)>-d, \label{eq:estknew}\\
        \|\rho_{new}(x,t)+x_2\|_{C^1}&<K,\quad \|\rho_{pert}(x)\|_{H^2}< \epsilon_{2}, \quad\|\rho_{new}(x,t)-\rho(x,t)\|_{C^1}<\epsilon_{2}, \label{est:rhonew}
    \end{align}
with $c_{M} > 0$, a constant that depends only on $M$ and decreases as $M$ increases.
Furthermore, we have that
\begin{align}
    \|\rho(x,t)-\rho_{new}(x,t)\|_{C^{2.5}(\R^2\setminus B_{\delta}(0))}&\leq \epsilon_{3},\\
    \|\rho(x,t)-\rho_{new}(x,t)\|_{L^{2}(\R^2)}&\leq \epsilon_{3}.
\end{align}
\end{lem}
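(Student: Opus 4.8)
The plan is to add to $\rho$ a single highly concentrated ``layer'', as in the gluing Lemmas \ref{gluing1}--\ref{gluing2}, and then read off all the conclusions. Fix once and for all a symmetric $f\in C^\infty(\R^2)$ with $\supp f\subset(B_2(0)\setminus B_1(0))\cap\{|x_2|\ge 4|x_1|\}$ and $x_2 f(x)<0$ on $\supp f$; then $\supp f\subset\mathcal{D}$ for any $\mathfrak{C}\le 4$, so the sector hypothesis of Lemma \ref{stabledeformation} holds, and since $y_2(y_2^2-3y_1^2)$ has the sign of $y_2$ on $\supp f$ (where $y_2^2>3y_1^2$), formula \eqref{eq:der-u1} gives $c_0:=-\de_{x_1}u_1[f](x=0)>0$. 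For $a>0$ small and $\lambda>1$ large, to be fixed \emph{in that order} at the end, set $\rho_{pert}(x,0):=\tfrac1\lambda\,af(\lambda x)$. This is symmetric, supported in $B_{2/\lambda}(0)\setminus B_{1/\lambda}(0)$ (hence disjoint from $\supp(\rho(x,0))\subset\R^2\setminus B_\delta(0)$ once $\lambda>2/\delta$), and since the map $g\mapsto\tfrac1\lambda g(\lambda\cdot)$ preserves $\de_{x_1}u_1[\cdot](x=0)$ and the $\dot H^2$ seminorm, one gets $\de_{x_1}u_1[\rho_{pert}(\cdot,0)](x=0)=-ac_0<0$ and $\|\rho_{pert}(\cdot,0)\|_{H^2}\le Ca\|f\|_{H^2}<\epsilon_2$ for $a$ small. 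Let $\rho_{new}$ solve \eqref{eq:ipm} with data $\rho(x,0)+\rho_{pert}(x,0)$; by the Remark after Definition \ref{def:symmetric} it is symmetric, and by Lemma \ref{gluing1} (with $af$ in place of $f$) it splits as $\rho_{new}=\tilde\rho_{\text{ext}}+\tilde\rho_{\lambda,\text{inter}}$, with $\tilde\rho_{\text{ext}}(x,0)=\rho(x,0)$ and $\tilde\rho_{\lambda,\text{inter}}(x,0)=\rho_{pert}(x,0)$.

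The \emph{approximation chain} goes as follows. Put $k(t):=\de_{x_1}u_1[\rho](x=0,t)<0$, so that $0<-\int_0^T k\le M$. Apply Lemma \ref{gluing2} with $\rho_{\text{in}}:=f$ and this $k$: for $a$ small, $\bar r(x,t):=\lambda\bar\rho_{\lambda,\text{inter}}(x/\lambda,t)$ --- which solves the same problem as the function $\bar\rho$ in that lemma --- satisfies $\|\bar r\|_{H^4}\le Ca$ and $\|\rho^\sharp-\bar r\|_{H^3}\le Ca^2$, where $\rho^\sharp$ solves the linear transport equation \eqref{eq:lemmastabledef}, $\de_t\rho^\sharp+k(t)(x_1,-x_2)\cdot\nabla\rho^\sharp=0$, with $\rho^\sharp(\cdot,0)=af$. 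The bound $\|\bar r\|_{H^4}\le Ca$ is the missing hypothesis of Lemma \ref{gluing1}, which now yields $\|\rho-\tilde\rho_{\text{ext}}\|_{H^3}\le Ct\lambda^{-1}$, $\|\tilde r-\bar r\|_{H^3}\le Ct\lambda^{-1}$ with $\tilde r:=\lambda\tilde\rho_{\lambda,\text{inter}}(x/\lambda,t)$, uniform $H^4$ bounds for $\tilde\rho_{\text{ext}}+x_2$ and $\tilde r$, and hence $\rho_{new}+x_2\in H^4$ on $[0,T]$. Combining, $\|\rho^\sharp-\tilde r\|_{H^3}\le C(a^2+t\lambda^{-1})$. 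Finally $\rho^\sharp$ satisfies \emph{all} hypotheses of Lemma \ref{stabledeformation} ($k<0$, symmetric data $af$ with $x_2\,af(x)\le0$ and $\supp(af)\subset\mathcal{D}$), so $\de_t\de_{x_1}u_1[\rho^\sharp](x=0,t)\ge0$ and $\de_{x_1}u_1[\rho^\sharp](x=0,t)\in[-ac_0,\,-ac_0\mathrm{e}^{-7M}]$ for $t\in[0,T]$.

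It remains to assemble $k_{new}$. By linearity and by the scale invariance of $\de_{x_1}u_1[\cdot](x=0)$ (valid since $\de_{x_1}u_1$ is a degree-one homogeneous operator), $k_{new}(t)=\de_{x_1}u_1[\tilde\rho_{\text{ext}}](x=0,t)+\de_{x_1}u_1[\tilde r](x=0,t)$, using $\de_{x_1}u_1[\tilde\rho_{\lambda,\text{inter}}](x=0,\cdot)=\de_{x_1}u_1[\tilde r](x=0,\cdot)$. Writing $\de_{x_1}u_1[h]=-\cR_2\cR_1\de_{x_1}h$ gives $|\de_{x_1}u_1[h](x=0)|\lesssim\|h\|_{H^3}$; and, since $u[-x_2]=0$ and $\de_{x_1}u_1$ annihilates $x_2$, the error terms $\tilde\rho_{\text{ext}}-\rho$, $\tilde r-\rho^\sharp$ (as well as their time derivatives, read off from their evolution equations) may be treated modulo the $-x_2$ background; being $H^3$-small and $H^4$-bounded, they are small in $H^{3-}$, and their time derivatives small in $H^{2+}\hookrightarrow C^0$. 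Hence, also using Lemma \ref{stabledeformation}, $k_{new}(t)=k(t)+\de_{x_1}u_1[\rho^\sharp](x=0,t)+E(t)$ and $\de_t k_{new}(t)=\de_t k(t)+\de_t\de_{x_1}u_1[\rho^\sharp](x=0,t)+E'(t)$, with $|E(t)|,|E'(t)|\le C(a^2+t\lambda^{-1})$ and $\de_t\de_{x_1}u_1[\rho^\sharp](x=0,t)\ge0$. Fixing $a$ small (so that $Ca^2\le\tfrac14 ac_0\mathrm{e}^{-7M}$, $\|\rho_{pert}\|_{H^2}<\epsilon_2$, and the induced $C^1$, $H^4$ bounds are small) and then $\lambda$ large (so that $Ct\lambda^{-1}\le\tfrac14 ac_0\mathrm{e}^{-7M}$ on $[0,T]$ and $|E|,|E'|<\min(\epsilon_2,\epsilon_3)$ and smaller than the slack $\de_t k(t)+d>0$), we obtain: $k_{new}(t)\le k(t)-\tfrac12 ac_0\mathrm{e}^{-7M}<0$; integrating, $\int_0^T k_{new}\le\int_0^T k-\tfrac12 Tac_0\mathrm{e}^{-7M}\le\int_0^T k-\epsilon_2 c_M$, upon taking $a$ comparable to $\epsilon_2$ and $c_M:=c\,\mathrm{e}^{-7M}$ (depending only on $M$, decreasing in $M$); and $\de_t k_{new}(t)\ge\de_t k(t)-|E'(t)|>-d$. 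The rest is soft. From $\rho_{new}-\rho=(\tilde\rho_{\text{ext}}-\rho)-\tilde\rho_{\lambda,\text{inter}}$ with $\|\tilde\rho_{\text{ext}}-\rho\|_{C^1}\lesssim t\lambda^{-1}$ and $\|\tilde\rho_{\lambda,\text{inter}}\|_{C^1}\le\|\tilde r\|_{C^1}\lesssim\|\bar r\|_{H^4}+\|\tilde r-\bar r\|_{H^3}\lesssim a+t\lambda^{-1}$ we get $\|\rho_{new}-\rho\|_{C^1}<\epsilon_2$, hence $\|\rho_{new}+x_2\|_{C^1}<K$. Since $\tilde\rho_{\lambda,\text{inter}}\equiv0$ on $\R^2\setminus B_\delta(0)$ for $\lambda$ large, $\|\rho-\rho_{new}\|_{C^{2.5}(\R^2\setminus B_\delta)}=\|\rho-\tilde\rho_{\text{ext}}\|_{C^{2.5}(\R^2\setminus B_\delta)}\lesssim\|\rho-\tilde\rho_{\text{ext}}\|_{H^{3.5}}\lesssim(t\lambda^{-1})^{1/2}<\epsilon_3$ (interpolating the $H^3$-smallness against the $H^4$ bound, with $H^{3.5}\hookrightarrow C^{2.5}$), while $\|\rho-\rho_{new}\|_{L^2}\le\|\rho-\tilde\rho_{\text{ext}}\|_{L^2}+\|\tilde\rho_{\lambda,\text{inter}}\|_{L^2}\le Ct\lambda^{-1}+a\lambda^{-2}\|f\|_{L^2}<\epsilon_3$. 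Finally $\supp(\rho_{new}(\cdot,t))\cap B_{\delta_{new}}(0)=\emptyset$ with $\delta_{new}\sim\mathrm{e}^{-CT}\lambda^{-1}$, by the support control of Section \ref{sec:support} applied to $\tilde\rho_{\text{ext}}$ and $\tilde\rho_{\lambda,\text{inter}}$ (velocities vanish at the origin and are Lipschitz, so supports drift at most exponentially).

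The main obstacle is the control of the errors $E$ and $E'$ in $k_{new}$ and $\de_t k_{new}$. These are point values of $\de_{x_1}$ of the solution (respectively of $\de_t$ of it), i.e.\ effectively two derivatives above $\rho$, whereas the gluing Lemmas \ref{gluing1}--\ref{gluing2} only deliver $H^3$-smallness of the errors together with $H^4$-boundedness. One therefore has to interpolate to cross the $H^{2+}\hookrightarrow C^0$ threshold and, for $E'$, push this through the derivative-losing transport equations of the error terms; moreover the concentrated layer forces one to first exploit the scale invariance of $\de_{x_1}u_1[\cdot](x=0)$ to trade concentration for an $O(1)$-scale picture. This is precisely the place where the partially dissipative structure --- carried by the $u_2$-forcing hidden inside the equation for $\tilde\rho_{\text{ext}}+x_2$ --- must be absorbed, which is done by working modulo the $-x_2$ background, on which $u$ and $\de_{x_1}u_1$ vanish. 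One must also keep these (genuinely small) errors dominated by the \emph{fixed} gain $ac_0\mathrm{e}^{-7M}$ and by the slack in $\de_t k(t)>-d$, which is the reason $f$ is fixed first, then $a$ taken small, then $\lambda$ taken large.
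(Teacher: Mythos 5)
Your proposal follows the same route as the paper: scale a small layer $a\lambda^{-1}f(\lambda x)$, bridge the real solution to the linear transport model via Lemma \ref{gluing2} (nonlinear transport) and Lemma \ref{gluing1} (gluing), apply Lemma \ref{stabledeformation} to the linear model, and then assemble $k_{new}$, the $C^1$, $C^{2.5}$, $L^2$, and support estimates from these pieces, fixing $f$ first, then $a$ small, then $\lambda$ large. Two small slips worth fixing but not structural: $H^{3.5}(\R^2)$ does not embed into $C^{2.5}(\R^2)$ (the embedding needs strict inequality $s>3.5$; the paper interpolates to $H^{3.6}$), so adjust the interpolation exponent; and since $a$ must also satisfy $a\le a_0(M)$ from Lemma \ref{gluing2}, your constant $c_M$ should include the $a_0(M)$ factor (as in the paper's $c_M\sim a_0(M)\,\mathrm{e}^{-8M}$), not merely $c\,\mathrm{e}^{-7M}$.
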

\begin{proof}

%
We consider an initial condition $\rho(x,t=0)+\rho_{pert}(x,t=0)$, where $\rho_{pert}(x,t=0)=a\frac{f(\lambda x)}{\lambda}$, with $f(x)$ symmetric, compactly supported, fulfilling the support conditions of Lemma \ref{stabledeformation}, together with $f(x) \cap B_{\frac 14}(0)=\emptyset$, and $a \le a_0$ as in Lemma \ref{gluing2}.
First, note that for $\lambda$ so big that 
\begin{align*}
    \frac{\text{diam}(\supp (f(x))}{\lambda} < \delta,
\end{align*}
it holds
\begin{equation}\label{eq:suppinitiallemmadeform}
    \text{supp}(\rho(x,0))\cap\text{supp} (\rho_{pert}(x, 0))=\emptyset.
\end{equation}
Now, we would like to prove that 
$\rho(x,t)+{\rho}_{pert}(x,t)$, with ${\rho}_{pert}(x,t)$ solving \eqref{eq:lemmastabledef}, is a good approximation of $\rho_{new}(x,t)$.
It is easy to see that for $t \in [0,T]$, adjusting $\lambda$ if needed,
\begin{equation}
    \text{supp}(\rho(x,t))\cap\text{supp} (\rho_{pert}(x, t))=\emptyset.
\end{equation}
By \eqref{ineq:lemma1.3-lower} in Lemma \ref{stabledeformation}, we have %
\begin{align}
    \int_{0}^{T} \de_{x_{1}}u_1[\rho+{\rho}_{pert}](x,t)\, d t & \leq \int_{0}^{T} \de_{x_{1}}u_1[\rho](x,t) \, d t+{\text{e}^{-7M}T}\de_{x_1}u_1[\tilde \rho_{pert}](x=0, t=0)\notag\\
    & \le \int_{0}^{T} \de_{x_{1}}u_1[\rho](x,t) \, d t-\tilde Ca {\text{e}^{-7M}T},\label{est:k}
\end{align}
where we used the scaling $\tilde \rho_{pert}(x, t=0)=\frac{f(\lambda x)}{\lambda}$ (where $f(\lambda x)$ is supported away from a ball centered in the origin of radius $\frac{1}{4 \lambda}$), so that, in polar coordinates $(r', \alpha')$
\begin{align*}
    0>\de_{x_1}u_1[\tilde \rho_{pert}](x=0, t=0) =: -\tilde C
\end{align*}
is uniformly bounded in $\lambda$.
As ${\rho}_{pert}(x,t)$ solves \eqref{eq:lemmastabledef}, relying on Lemma \ref{gluing2} we can introduce the approximation $\bar \rho_{pert}(x,t)$, satisfying 
\begin{align}
    \de_t \bar \rho_{pert} + (k(t)(x_1, -x_2) + \mathbf{u}[\bar \rho_{pert}])\cdot \nabla \bar \rho_{pert}=0,
\end{align}
with the same initial condition $\bar \rho_{pert}(x, t=0)=\rho_{pert}(x, t=0)$. In fact, from Lemma \ref{gluing2} and Sobolev embedding, it follows that, after rescaling in $\lambda$,
\begin{align}\label{est:pert}
    \| (\bar \rho_{pert}- {\rho}_{pert})\left(t, {\cdot}\right)\|_{C^1} &= 
    \left\| \lambda\left(\bar \rho_{pert}- {\rho}_{pert}\right)\left(t, \frac{\cdot}{\lambda}\right)\right\|_{C^1} \lesssim \left\|\lambda(\bar \rho_{pert}- {\rho}_{pert})\left (t, \frac{\cdot}{\lambda}\right)\right\|_{H^3} \lesssim a^2 t, \\\ \de_t \|(\bar \rho_{pert}- {\rho}_{pert})(t)\|_{C^1} &\lesssim a^2,
\end{align}
where $\lesssim$ hides a constant independent of $\lambda, a$.

Now we appeal to Lemma \ref{gluing1}. Specifically, as $\bar \rho_{pert}(x, t)$ solves \eqref{simp1}, we now introduce two functions $\tilde \rho_{pert}(x, t)$ - solving the last system in Lemma \ref{gluing1} - and $\tilde \rho (x,t)$  - solving the second-to-last system in Lemma \ref{gluing1}. This way, as pointed out after Lemma \ref{gluing1}, $\tilde \rho(x, t) + \tilde \rho_{pert} (x,t)$ is an exact solution to \eqref{eq:ipm} with initial data 
\begin{align}
    \rho(x, t=0) + \tilde \rho_{pert} (x,t=0)=\rho_{new}(x, t=0),
\end{align}
where $\rho_{new}(x,t)\in C^\infty$ is an exact solution to \eqref{eq:ipm} as well, with the same initial conditions. By uniqueness, $\rho_{new}(x,t)$ and $\tilde\rho(x, t) + \tilde \rho_{pert} (x,t)$ must coincide at least for $t \in [0,T]$. Notice that, if the hypothesis for Lemma \ref{gluing1} are fulfilled, it holds that
\begin{align}\label{eq:bounds-small}
    \|\tilde \rho - \rho\|_{H^3} \le C\lambda^{-1}, \quad \|\tilde \rho + x_2\|_{H^4} \le C, \quad \|\rho + x_2\|_{H^4} \le C.
\end{align}
Therefore, it remains to prove that $\bar \rho_{pert} (x,t)$ as above is a good approximation to $\tilde \rho_{pert} (x,t)$.
To apply Lemma \ref{gluing1}, we need to verify that $\left\|\lambda \bar\rho_{pert}\left(\frac{x}{\lambda}, t\right) \right\|_{H^4}$ is uniformly bounded for all $t \in [0,T]$, where $T>0$ is arbitrarily given by the statement of Lemma \ref{inductiondeform}. To this end, first note that, by standard commutator estimates,
\begin{align*}
    \frac{d}{dt}\left\|\lambda \bar \rho_{pert}\left(\frac{x}{\lambda}, t\right)\right\|_{H^4}^2 \lesssim \left(-k(t)+\left\|\lambda \bar \rho_{pert}\left(\frac{x}{\lambda}, t\right)\right\|_{H^4}\right) \left\|\lambda \bar \rho_{pert}\left(\frac{x}{\lambda}, t\right)\right\|_{H^4}^2,
\end{align*}
which gives, simplifying the squares and introducing $\bold y(t):=\text{e}^{\int_0^t k (\tau) \, d\tau}\left\|\lambda \bar \rho_{pert}\left(\frac{x}{\lambda}, t\right)\right\|_{H^4}$, the ODE
\begin{align*}
    \frac{d}{dt}{\bold y(t)} \lesssim \text{e}^{M}{\bold y(t)}^2
\end{align*}
with initial data
\begin{align}
    \bold y(0)=\left\|\lambda \bar \rho_{pert}\left(\frac{x}{\lambda}, t=0\right)\right\|_{H^4} \lesssim a.
\end{align}
Then, the existence time of $\left\|\lambda \bar \rho_{pert}\left(\frac{x}{\lambda}, t\right)\right\|_{H^4}$ is lower bounded as
\begin{align}
    T_{\bar \rho_{pert}} \ge C^{-1} a^{-1},
\end{align}
where $C$ is independent of $a$,
so that we can choose $a$ small enough to deduce that $\left\|\lambda \bar \rho_{pert}\left(\frac{x}{\lambda}, t\right)\right\|_{H^4} $ is uniformly bounded for all $t \in [0,T]$. 
We can thus apply Lemma \ref{gluing1}.
From \eqref{est:scaled-new}, it follows that
\begin{align}
    \left\|\lambda \tilde{\rho}_{pert}\left(\frac{x}{\lambda}\right)-\lambda \bar{\rho}_{pert}\left(\frac{x}{\lambda}\right)\right\|_{H^3}\leq C t\lambda^{-1},\quad \de_{t}\left\|\lambda \tilde{\rho}_{pert}\left(\frac{x}{\lambda}\right)-\lambda \bar{\rho}_{pert}\left(\frac{x}{\lambda}\right)\right\|_{H^3}\leq C\lambda^{-1}.
\end{align}
Note that due to the $C^1$ norm behaviour under the scaling $\lambda f (\cdot/\lambda)$, we have
\begin{align*}
     \left\| \tilde{\rho}_{pert}\left({x}\right)- \bar{\rho}_{pert}\left({x}\right)\right\|_{C^1} & \leq  \left\|\lambda \tilde{\rho}_{pert}\left(\frac{x}{\lambda}\right)-\lambda \bar{\rho}_{pert}\left(\frac{x}{\lambda}\right)\right\|_{C^1} \\
     & \le C \left\|\lambda \tilde{\rho}_{pert}\left(\frac{x}{\lambda}\right)-\lambda \bar{\rho}_{pert}\left(\frac{x}{\lambda}\right)\right\|_{H^3} \le C \lambda^{-1} t,
\end{align*}
by Lemma \ref{gluing1}. The estimate of the time derivative works similarly.
Altogether, and using additionally that 
$$\|\de_{x_{i}}u_j[g(x)]\|_{L^{\infty}}=\left\|\de_{x_{i}}\left(u_j\left[\frac{g(\lambda x)}{\lambda}\right]\right)\right\|_{L^{\infty}}, \quad i, j \in \{1,2\},$$
we obtain the following estimates:
\begin{align}
    \de_{t}\|(\rho_{new}-\tilde \rho-\tilde{\rho}_{pert})(t)\|_{C^1}&\leq C(\lambda^{-1}+a^2),\notag\\
    \|(\rho_{new}-\tilde\rho-\tilde{\rho}_{pert})(t)\|_{C^1}&\leq Ct(\lambda^{-1}+a^2), \label{re:C1close}\\
    \de_{t}\|\mathbf{u}[\rho_{new}-\tilde\rho-\tilde{\rho}_{pert}](t)\|_{C^1}&\leq C(\lambda^{-1}+a^2),\notag\\
    \|\mathbf{u}[\rho_{new}-\tilde\rho-\tilde{\rho}_{pert}](t)\|_{C^1}&\leq Ct(\lambda^{-1}+a^2).\notag
\end{align}
Furthermore, from Lemma \ref{stabledeformation}, we have that
$$\|\tilde{\rho}_{pert}(x,t)\|_{C^1}\leq Ca\mathrm{e}^{M},\quad \de_{t}\de_{x_{1}}u_1[\tilde{\rho}_{pert}](x=0,t)\geq 0.$$ 
All of these conditions together and \eqref{est:k} imply that there exists some small $a_{0}(M)$ depending only on $M$ such that,for $a\leq a_{0}$, for $\lambda$ big enough,  we have for $t\in[0,T]$ that $\rho_{new}(x,t)\in H^3$ with
\begin{align*}
k_{new}(t)&=\de_{x_{1}}u_1[\rho_{new}](x=0, t)<0,\quad \int_{0}^{T}k_{new}(t)\, d t\leq \int_{0}^{T}k(t)\, d t-{\tilde C}a\mathrm{e}^{-7M},\\
\de_{t}k_{new}(t)&>-d, \quad \|\rho_{new}(x,t)+x_2\|_{C^1}<K, \quad \|\rho_{new}(x,t)-\rho(x,t)\|_{C^1}<\bar{C}(\lambda^{-1}+a^2+ae^{M}).
\end{align*}
In fact, assuming without loss of generality $a_{0}(M)\leq 1$, and after taking $\lambda$ big enough,  we simply set 
$$a= a_{0}(M)\frac{e^{-M}}{\bar{C}}\frac{\eps_{2}}{3},$$
so that we obtain the desired inequalities for $\|\rho_{new}\|_{C^1}$ and $\int_0^T k(t) \, d t$ upon choosing 
$c_{M}=a_{0}(M)\frac{{\tilde C}}{3\bar{C}}\mathrm{e}^{-8M}$. 
Next, regarding the support, note that the property holds true at $t=0$. Since $\mathbf{u}[\rho_{new}](x=0)=0$, the control of the ${C}^1$ norm of the velocity $\|\mathbf{u}[\rho_{new}+x_2]\|_{C^1} \le C$ (which follows by the $H^3$ control of $\tilde \rho (x, t)+x_2$ and $\tilde \rho_{pert}(x, t)$ by \eqref{est:nonscaled-new}-\eqref{est:scaled-new}) implies that our approach to the origin is only exponential. 
In particular, this implies that $$\supp(\rho_{new}(x,t))\cap B_{\delta_{new}}(0)=\emptyset,$$ 
with
\begin{align*}
    \delta_{new} \ge \frac{c}{\lambda} \text{e}^{Ct}, \quad t \in [0,T], 
\end{align*}
and ensures that $\supp (\tilde \rho) \cap \supp (\tilde \rho_{pert}) = \emptyset$, by
choosing $\lambda$ so big that $\delta > \frac{c}{\lambda} \text{e}^{CT}$. 
Now consider, for $t \in [0, T]$,
\begin{align*}
    \|(\rho_{new}-\rho)(t)\|_{C^{2.5}(\R^2\setminus B_{\delta}(0))} & =  \|(\tilde \rho+\tilde \rho_{pert}-\rho)(t)\|_{C^{2.5}(\R^2\setminus B_{\delta}(0))} \\
    &\le \|(\tilde \rho-\rho)(t)\|_{C^{2.5}(\R^2\setminus B_{\delta}(0))},
\end{align*}
where the second addend cancels since $\supp \tilde \rho_{pert} \subset B_\delta (0)$.
We use that
\begin{align*}
    \|(\tilde \rho - \rho)(t)\|_{C^{2.5}(\R^2\setminus B_\delta(0))} \lesssim \|(\tilde \rho - \rho)(t)\|_{H^{3.6}(\R^2)},
\end{align*}
where
\begin{align*}
    \|(\tilde \rho - \rho)(t)\|_{H^{3.6}} \le C\lambda^{-\frac{2}{5}}
\end{align*}
by interpolation between the control of $\|(\tilde \rho - \rho)(t)\|_{H^{3}} \lesssim C \lambda^{-1}$ and $\|\tilde \rho-\rho\|_{H^4}=\|(\tilde \rho+x_2)-(\rho+x_2)\|_{H^4} \le C$ in \eqref{eq:bounds-small}.

Finally note that
$$\|\rho_{new}-\rho\|_{L^2}\leq \|\tilde{\rho}-\rho\|_{L^2}+\|\tilde{\rho}_{pert}\|_{L^2}\leq C\lambda^{-1}.$$
The proof is concluded.
\end{proof}


The previous lemma allows us to construct solutions with a small norm in $H^2$ whose velocity generates a strong deformation near the origin. To establish our final result, we require another lemma that leverages this deformation to induce rapid growth in $H^2$. However, before proving this, we need to introduce a brief auxiliary result.

\begin{lem}\label{boundvelocity}
    Given $R>0$ and $j\in \N\cup 0$, there exists a constant $C>0$ such that, if $f(x)$ is a $C^{\infty}$ function with support in  $B_{R}(0)$, then, for any $A>2$ and any $\theta_0 \in \R$,
    $$\|u_{1}[f(x)\sin(A x_{1}+\theta_{0})]\|_{C^{j}} \leq C\ln(A)A ^{j-1}\|f\|_{C^{j+2}}$$
    $$\|u_{2}[f(x)\sin(A x_{1}+\theta_{0})]\|_{C^{j}} \leq C\ln(A)A ^{j}\|f\|_{C^{j+1}}$$
    $$\|u_{1}[f(x)\sin(A x_{2}+\theta_{0})]\|_{C^{j}} \leq C\ln(A)A ^{j}\|f\|_{C^{j+1}}$$
    $$\|u_{2}[f(x)\sin(A x_{2}+\theta_{0})]\|_{C^{j}} \leq C\ln(A)A ^{j-1}\|f\|_{C^{j+2}}.$$
\end{lem}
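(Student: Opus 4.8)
The statement has a convenient reduction, so I would organise the proof in two stages: first a reduction to the case $j=0$, then the core oscillatory–singular–integral estimate for $j=0$.

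\textbf{Reduction to $j=0$.} Each velocity component $u_i[\,\cdot\,]$ is a constant–coefficient Fourier multiplier (a composition of Riesz transforms, plus a local term $-\tfrac12\rho$ for $u_2$), so it commutes with $\partial^\alpha$ and $\partial^\alpha u_i[f\sin(Ax_k+\theta_0)]=u_i[\partial^\alpha(f\sin(Ax_k+\theta_0))]$. By Leibniz, writing $e_k$ for the $k$-th unit vector,
\[
\partial^\alpha\bigl(f\sin(Ax_k+\theta_0)\bigr)=\sum_{0\le\beta\le\alpha_k}\binom{\alpha_k}{\beta}A^\beta\,\bigl(\partial^{\alpha-\beta e_k}f\bigr)\,\tau_\beta(Ax_k+\theta_0),\qquad \tau_\beta\in\{\pm\sin,\pm\cos\},
\]
since only derivatives in the $x_k$ variable act on the sine. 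Hence it suffices to prove the four inequalities for $j=0$, with $f$ replaced by an arbitrary $g\in C_c^\infty(B_R(0))$; summing over $\beta$ and using $\|\partial^{\alpha-\beta e_k}g\|_{C^\ell}\le\|f\|_{C^{\ell+j}}$ (and that the top power $A^{\alpha_k}\le A^j$ dominates) recovers the loss of $j$ derivatives and the stated powers of $A$. The local term $-\tfrac12\rho$ in $u_2$ contributes $\lesssim A^j\|f\|_{C^j}$, which is within the claimed bounds.

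\textbf{Core estimate for $j=0$.} Fix $g\in C_c^\infty(B_R(0))$. Using the explicit kernels \eqref{eq:vel-kernel}–\eqref{eq:kernels}, write $u_i[g\sin(Ax_k+\theta_0)](x)=\text{PV}\!\int_{\R^2}H_i(z)\,g(x-z)\sin(A(x_k-z_k)+\theta_0)\,dz$ (plus the harmless local term when $i=2$), and expand $\sin(A(x_k-z_k)+\theta_0)=\sin(Ax_k+\theta_0)\cos(Az_k)-\cos(Ax_k+\theta_0)\sin(Az_k)$, so that the $x$–oscillation factors out with $L^\infty$ norm $\le 1$. Fix a radial $\eta\in C_c^\infty(B_2(0))$ with $\eta\equiv 1$ on $B_1(0)$, set $\eta_A(z)=\eta(Az)$, and split into a near part ($\eta_A$) on $|z|\lesssim1/A$ and a far part ($1-\eta_A$) on $|z|\gtrsim1/A$. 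In the near part one subtracts from $g(x-z)$ those low–order Taylor monomials at $z=0$ that render the integrand odd in $z_1$ or in $z_2$ against $H_i(z)\eta_A(z)\cos(Az_k)$, resp.\ $H_i(z)\eta_A(z)\sin(Az_k)$ — here one uses that $H_1$ is odd--odd and $H_2$ even--even, and $\eta_A$ even--even; such terms integrate to zero. The surviving increment is $\lesssim |z|\,\|g\|_{C^1}+|z|^2\|g\|_{C^2}$, and since $|\sin(Az_k)|\le A|z|$ on $B_{2/A}(0)$, integrating $|H_i(z)|\lesssim|z|^{-2}$ against this over $B_{2/A}(0)$ gives $\lesssim A^{-1}\|g\|_{C^2}$ in the gaining cases ($u_1$ with $k=1$, $u_2$ with $k=2$) and $\lesssim\|g\|_{C^1}$ otherwise. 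In the far part, $H_i(z)(1-\eta_A(z))g(x-z)$ is smooth with compact support, so integrating by parts once in $z_k$ via $\sin(A(x_k-z_k)+\theta_0)=\pm\tfrac1A\partial_{z_k}\cos(A(x_k-z_k)+\theta_0)$ produces three terms: (i) $\partial_{z_k}H_i$ (now degree $-3$) times $(1-\eta_A)g(x-z)$; expanding the cosine and again using parity to subtract $g(x)$ (and the admissible first–order term), the leftover integrand is $\lesssim|z|^{-2}\|g\|_{C^1}+|z|^{-1}\|g\|_{C^2}$, whose integral over $1/A\le|z|\lesssim1$ produces the logarithm, giving $\lesssim\tfrac{\ln A}{A}\|g\|_{C^2}$ after the $\tfrac1A$ from the integration by parts; (ii) the term where $\partial_{z_k}$ hits the cutoff, supported on $1/A\le|z|\le2/A$, handled by the same parity argument (using the parity of $\partial_k\eta$ of a radial $\eta$), bounded by $\lesssim\tfrac1A\|g\|_{C^1}$; (iii) the term $H_i(1-\eta_A)\partial_{z_k}g(x-z)$, bounded directly by $\tfrac1A\int_{1/A\le|z|\lesssim1}|z|^{-2}\|g\|_{C^1}\,dz\lesssim\tfrac{\ln A}{A}\|g\|_{C^1}$. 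For $|x|$ large the far integrand lives in the thin annulus $|x|-R\le|z|\le|x|+R$, where $g$ and $\nabla g$ vanish at $x$ and everything decays like $|x|^{-1}$, so the supremum is attained for $|x|\lesssim R$ and $\ln\bigl((|x|+R)A\bigr)\lesssim_R\ln A$.

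\textbf{Conclusion and main obstacle.} Adding near and far parts gives the $j=0$ bounds $\|u_1[g\sin(Ax_1+\theta_0)]\|_{\infty}\lesssim\tfrac{\ln A}{A}\|g\|_{C^2}$, $\|u_2[g\sin(Ax_2+\theta_0)]\|_{\infty}\lesssim\tfrac{\ln A}{A}\|g\|_{C^2}$, $\|u_2[g\sin(Ax_1+\theta_0)]\|_{\infty}\lesssim\ln(A)\|g\|_{C^1}$, $\|u_1[g\sin(Ax_2+\theta_0)]\|_{\infty}\lesssim\ln(A)\|g\|_{C^1}$; combined with the reduction of the first step this yields the lemma. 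The principal difficulty is the parity bookkeeping: for each of the kernels $H_1,H_2,\partial_1H_1,\partial_2H_1,\partial_1H_2,\partial_2H_2$ and the cutoff derivative, paired with $\cos(Az_k)$ or $\sin(Az_k)$, one must identify precisely which Taylor monomials of $g(x-z)$ produce an odd integrand (and hence may be subtracted for free), and then verify that the surviving kernel times the increment is absolutely integrable near $z=0$ and produces only a single logarithm on $1/A\le|z|\lesssim1$. The degree-$(-3)$ term (i) in the far part — where the extra factor $A^{-1}$ distinguishing $u_1[\,\cdot\,\sin(Ax_1)]$ and $u_2[\,\cdot\,\sin(Ax_2)]$ from the other two cases is genuinely used — is the tightest point. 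For the even--even kernel $H_2$ one must also be mildly careful with the $z$-independent (i.e.\ $g(x)$-) contributions, which depend on $\eta$ individually but cancel between the near part, the far part and the local term $-\tfrac12\rho$, so that the final estimate is the cutoff-independent quantity.
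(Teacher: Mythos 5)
Your proof is correct in substance, but it takes a genuinely different route from the paper's. The paper works with the mildly singular representation $\mathbf{u}=K\ast\nabla^{\perp}\rho$ from \eqref{eq:vel-kernel2}, where $|K(z)|\lesssim|z|^{-1}$ is absolutely integrable near the origin. This lets it estimate the near zone $|h|\le 1/A$ directly by $\|g\|_{L^\infty}/A$ with no parity or Taylor bookkeeping, then use a three-zone split ($|h|\le 1/A$, $1/A\le|h|\le 1$, $|h|\ge 1$) and a single integration by parts in the intermediate and far zones to produce the $\ln(A)/A$ factor; the one derivative that falls on $\rho$ in $\nabla^{\perp}\rho$ is what releases the factor of $A$ that distinguishes the four cases. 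You instead work with the degree-$(-2)$ principal value kernels $H_i$ from \eqref{eq:vel-kernel}--\eqref{eq:kernels} acting directly on $\rho$, with a two-zone split at $|z|\sim1/A$; this forces you to track the odd/even parities of $H_1,H_2,\partial_k H_i,\partial_k\eta$ paired with $\cos(Az_k),\sin(Az_k)$ and to subtract Taylor monomials of $g(x-z)$ to justify convergence near $z=0$ and the degree-$(-3)$ term after integrating by parts in the far zone. Your approach is heavier but makes explicit which monomials survive and hence \emph{why} the two ``gaining'' cases ($u_1$ with $\sin(Ax_1)$, $u_2$ with $\sin(Ax_2)$) pick up the extra $A^{-1}$; the paper's choice of kernel hides that mechanism inside the $\nabla^\perp\rho$ factor and avoids the parity accounting entirely. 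One small caution in your write-up: when you Taylor-expand $g(x-z)$ in the far zone over $1/A\le|z|\lesssim|x|+R$, the expansion is only quantitatively useful where $|z|\lesssim R$; you do address this (for $|x|\gg R$ the annulus avoids $z=0$ and $g(x)=\nabla g(x)=0$), but it is worth stating that the subtraction is only performed on the portion $|z|\lesssim R$ and that the remainder of the far region is handled by absolute decay of the kernel.
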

\begin{proof}
    First, let us recall that $\partial_{x_{i}} \mathbf{u}[\rho]=\mathbf{u}[\partial_{x_{i}}\rho]$ and thus if we prove the bounds in the $L^{\infty}$ case, we can obtain the estimates in $C^{j}$ by taking derivatives. We will focus on the second component of the velocity, the other case being analogous. To show the required bound, we use that
    $$\mathbf{u}[\rho](x)=-\frac{1}{2\pi}\int_{\R^2}\frac{(x_{1}-y_{1})}{|x-y|^2}(\nabla^{\perp}\rho)(y)\, dy.$$
    Notice that
    $$ \left|\int_{|h|\leq \frac{1}{A}}\frac{h_{1}}{|h|^2}g(x+h)\sin(Ax_{i}+Ah_{i}+\theta_{0})\, dh\right|\leq  \frac{C\|g\|_{L^{\infty}}}{A}, $$
    and integrating by parts with respect to $h_{1}$,
    \begin{align*}
        \left|\int_{1\geq |h|\geq \frac{1}{A}}\frac{h_{1}}{|h|^2}g(x+h)\sin(Ax_{1}+Ah_{1}+\theta_{0})\, dh\right|
        &\leq C\int_{1\geq |h|\geq \frac{1}{A}}\left(\frac{1}{|h|^2}\|g\|_{L^{\infty}}+\frac{1}{|h|}\|g\|_{C^1}\right)\frac{1}{A}\, dh\\
        &\quad +C\int_{1\geq |h_{2}|
        \geq \frac{1}{A}}\frac{1}{|h_{2}|}\frac{\|g\|_{L^{\infty}}}{A}\, dh_{2}\\&\leq C\ln(A)\frac{\|g\|_{C^1}}{A}.
    \end{align*}
Next, using again integration by parts
    \begin{align*}
        \left|\int_{|h|\geq 1}\frac{h_{1}}{|h|^2}g(x+h)\sin(Ax_{1}+Ah_{1}\theta_{0})\,dh\right|&=\left|\int_{B_{R}(-x)\cap |h|\geq 1}\frac{h_{1}}{|h|^2}g(x+h)\sin(Ax_{1}+Ah_{1}\theta_{0})\, dh\right|\\
        &\leq C\int_{B_{R}(-x)\cap |h|\geq 1}\left(\frac{\|g\|_{L^{\infty}}}{|h|^2}+\frac{\|g\|_{C^1}}{|h|}\right)\frac{1}{A}\, dh\\
        &\quad +C\int_{0}^{2R}\frac{\|g\|_{L^{\infty}}}{A}\, dh_{2}\\
        &\leq C\frac{\|g\|_{C^1}}{A}\int_{B_{R}(-x)\cap |h|\geq 1}dh\\
        &\quad +C\frac{\|g\|_{L^{\infty}}}{A}\\
        &\leq \frac{C\|g\|_{C^1}}{A},
    \end{align*}
        so that
        $$\frac{1}{2\pi}\left\|\int_{\R^2}-\frac{(x_{1}-y_{1})}{|x-y|^2}(\nabla^{\perp}\rho)(y)\, dy\right\|_{L^{\infty}}\leq \frac{C\ln(A)\|g\|_{C^1}}{A}.$$
        This way,
        \begin{align*}
            \|u_{2}[f(x)\sin(A x_{1}+\theta_{0})]\|_{L^{\infty}}
            &\leq C\left|\int_{\R^2}\frac{(x_{1}-y_{1})}{|x-y|^2}((\partial x_{1}f)(y)\sin(A y_{1}+\theta_{0}))\,dy\right|\\
            &\quad +C\left|\int_{\R^2}\frac{(x_{1}-y_{1})}{|x-y|^2}(f(y)A\cos(A y_{1}+\theta_{0}))\, dy\right|\leq C\ln(A)\|f\|_{C^2},
        \end{align*}
         concluding the proof.

\end{proof}
\section{Iterative construction}
We now start the iterative construction of our approximate solution with a strong deformation at the origin.
\begin{lem}\label{inductiongrowth}
    Given $\epsilon_{1},\epsilon_{2},\epsilon_{3},K,T,M,d>0$, and  $\rho(x,t)$ a symmetric solution to \eqref{eq:ipm} for $t\in[0,T]$, such that at the initial time $\rho(x, t=0)=-x_2+\rho_{\text{in}}(x)$ as in \eqref{eq:data-stable} (stable initial data), $\rho(x,t)+x_2\in H^4$,  $\supp(\rho(x,t))\cap B_{\delta}(0)=\emptyset$ for some $\delta>0$, and such that
    $$k(t):=\de_{x_{1}}u_1[\rho](x,t)<0,\quad  -\int_{0}^{T}k(t)\, d t= M,$$
    $$\de_{t}k(t)>-d, \quad \|\rho(x,t)+x_2\|_{C^1}<K,\quad \|\rho(x,t=0)+x_2\|_{H^2}< \epsilon_{1}.$$
    Then, we can find a function $\rho_{pert}(x,t)$ such that, the solution $\rho_{new}(x,t)$ to \eqref{eq:ipm} with initial conditions $\rho(x,t=0)+\rho_{pert}(x,t=0)$ is symmetric, $\rho_{new}(x,t)+x_{2}\in H^4$ for $t\in[0,T]$, $\supp(\rho_{new}(x,t))\cap B_{\delta_{new}}(0)=\emptyset$ for some $\delta_{new}>0$, and
    \begin{equation}\label{growth1}
        k_{new}(t):=\de_{x_{1}}u_1[\rho_{new}(x,t)](x=0)<0,\ \int_{0}^{T}k_{new}(t)\, d t\leq \int_{0}^{T}k(t)\, d t+\epsilon_{3},
    \end{equation}
    \begin{equation}\label{growth2}
        \de_{t}k_{new}(t)>-d, \; \|\rho_{new}(x,t)+x_2\|_{C^1}<K,\;\|\rho_{pert}(x,t=0)\|_{H^2}< \epsilon_{2}, \; \|\rho_{new}(x,t)-\rho(x,t)\|_{C^1}<\epsilon_{3},
    \end{equation}
\begin{equation}\label{growth3}
    \|\rho_{new}(x,t)+x_2\|_{H^2}\geq \frac{\epsilon_{2}}{10}\text{e}^{\frac{2Mt}{T}-Ttd}.
\end{equation}
Finally, we have that
\begin{align}\label{convh2K}
    \|\rho(x,t)-\rho_{new}(x,t)\|_{C^{2.5}(\R^2\setminus B_{\delta}(0))}&\leq \epsilon_{3}; \\
    \label{convL2}
    \|\rho(x,t)-\rho_{new}(x,t)\|_{L^{2}(\R^2)}&\leq \epsilon_{3}.
\end{align}

\end{lem}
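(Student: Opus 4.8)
The plan is to follow the architecture of the proof of Lemma~\ref{inductiondeform}, replacing the sign-definite perturbation used there by one carrying a high-frequency oscillation in $x_1$, and to track in addition a lower bound for the $H^2$ norm of the new solution.

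\textbf{Choice of the perturbation and approximation chain.} I would set $\rho_{pert}(x,0)=a\lambda^{-1}f(\lambda x)$ with $f\in C^\infty(\R^2)$ symmetric, supported in $B_2(0)\setminus B_{1/4}(0)$, of the form $f(x)=\phi(x)\sin(Ax_1)$ with $\phi$ even in $x_1$, odd in $x_2$, and $A$ a fixed constant large enough that the $-A^2\phi\sin(Ax_1)$ term dominates, so that $\|\partial_{x_1}^2 f\|_{L^2}\ge\tfrac12\|f\|_{\dot H^2}$. The amplitude $a>0$ is chosen small enough that $\|\rho_{pert}(\cdot,0)\|_{H^2}\le\eps_2$ (using $\|\rho_{pert}(\cdot,0)\|_{H^2}\to a\|f\|_{\dot H^2}$ as $\lambda\to\infty$, it suffices that $a\|f\|_{\dot H^2}\in[\tfrac25\eps_2,\eps_2]$) and that $a\le a_0$ of Lemma~\ref{gluing2}; $\lambda$ is taken large last. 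With $k(t)=\partial_{x_1}u_1[\rho](x=0,t)$, I introduce exactly as in Lemma~\ref{inductiondeform} the chain $\rho_{pert}$ (solving the linear transport \eqref{eq:lemmastabledef}) $\to\bar\rho_{pert}$ (Lemma~\ref{gluing2}, in rescaled variables, with ``$\rho_{\text{in}}$''$=f$) $\to\tilde\rho_{pert},\tilde\rho$ (Lemma~\ref{gluing1}), so that $\rho_{new}=\tilde\rho+\tilde\rho_{pert}$ by uniqueness. The hypotheses of Lemmas~\ref{gluing2}--\ref{gluing1} are checked verbatim: $\|\lambda\bar\rho_{pert}(\cdot/\lambda,t)\|_{H^4}$ stays uniformly bounded on $[0,T]$ by the ODE/Grönwall argument once $a$ is small, and $\|\rho+x_2\|_{H^4}\lesssim1$ is assumed. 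This gives $\|\rho_{new}-\tilde\rho-\tilde\rho_{pert}\|_{C^1}\le Ct(\lambda^{-1}+a^2)$ and its time derivative, hence \eqref{growth2}, the support statement with $\delta_{new}\sim\lambda^{-1}e^{-M}$ (the support of $\tilde\rho_{pert}$ shrinks into $B_{c\lambda^{-1}e^{M}}(0)$ while $\supp\tilde\rho$ stays at distance $\ge\delta/2$ from $0$), and \eqref{convh2K}--\eqref{convL2} word-for-word from Lemma~\ref{inductiondeform} (interpolation between the $H^3$ and $H^4$ bounds, with $\supp\tilde\rho_{pert}\subset B_\delta(0)$). For \eqref{growth1} I write $k_{new}(t)=k(t)+\partial_{x_1}u_1[(\tilde\rho-\rho)+\tilde\rho_{pert}](x=0,t)$ and bound the correction via \eqref{eq:der-u1}: the $\tilde\rho-\rho$ part is $O(\lambda^{-1})$ by separation from the origin, and the $\tilde\rho_{pert}$ part is small since (after passing to $\bar\rho_{pert}$ and $\rho_{pert}$) one integrates a kernel smooth on the support against a function oscillating in $x_1$ at frequency $\sim A\lambda e^{\int_0^t|k|}$; repeated integration by parts (equivalently, Lemma~\ref{boundvelocity} on the rescaled perturbation) makes it $<\eps_3/(2T)$ in $C^0_t$. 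Finally $\partial_t k_{new}>-d$ follows because $\partial_t k$ is continuous on $[0,T]$, hence $\ge-d+c_0$ for some $c_0>0$, and the perturbation contributes less than $c_0$ in $C^0_t$.

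\textbf{The $H^2$ growth \eqref{growth3}.} Since $\rho_{new}+x_2=(\tilde\rho+x_2)+\tilde\rho_{pert}$ and $\tilde\rho\equiv0$ on $B:=\supp\tilde\rho_{pert}\subset B_{c\lambda^{-1}e^{M}}(0)$, restricting the $H^2$ integral to $B$ and using the triangle inequality,
\[
\|\rho_{new}+x_2\|_{H^2(\R^2)}\ \ge\ \|\partial_{x_1}^2\rho_{pert}\|_{L^2}-\|\tilde\rho_{pert}-\rho_{pert}\|_{\dot H^2}-\|x_2\|_{H^2(B)},\qquad \|x_2\|_{H^2(B)}\lesssim\lambda^{-1}e^{M}.
\]
For the main term, $\rho_{pert}$ solves \eqref{eq:lemmastabledef}, whose flow $\Phi(y,t)=(y_1e^{\int_0^t k},y_2e^{-\int_0^t k})$ is area-preserving, so a change of variables gives $\|\partial_{x_1}^2\rho_{pert}(\cdot,t)\|_{L^2}=e^{2\int_0^t|k|}\|\partial_{x_1}^2\rho_{pert}(\cdot,0)\|_{L^2}=e^{2\int_0^t|k|}\,a\|\partial_{x_1}^2 f\|_{L^2}$. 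Writing $g(t)=\int_0^t|k|$, we have $g(0)=0$, $g(T)=M$, $g''=-\partial_t k<d$, so $g-\tfrac{d}{2}t^2$ is concave and lies above its chord on $[0,T]$, giving $g(t)\ge\frac{Mt}{T}-\frac{dTt}{2}$; with $a\|\partial_{x_1}^2 f\|_{L^2}\ge\tfrac12 a\|f\|_{\dot H^2}\ge\tfrac15\eps_2$ this yields $\|\partial_{x_1}^2\rho_{pert}(\cdot,t)\|_{L^2}\ge\tfrac{\eps_2}{5}e^{\frac{2Mt}{T}-Ttd}$. The error satisfies $\|\tilde\rho_{pert}-\rho_{pert}\|_{\dot H^2}\le\|\tilde\rho_{pert}-\bar\rho_{pert}\|_{\dot H^2}+\|\bar\rho_{pert}-\rho_{pert}\|_{\dot H^2}$, and since $\dot H^2$ is invariant in two dimensions under $g\mapsto\lambda g(\lambda\,\cdot)$, this equals $\|\lambda(\tilde\rho_{pert}-\bar\rho_{pert})(\cdot/\lambda)\|_{\dot H^2}+\|\lambda(\bar\rho_{pert}-\rho_{pert})(\cdot/\lambda)\|_{\dot H^2}\le Ct\lambda^{-1}+Ca^2$ by Lemmas~\ref{gluing1} and \ref{gluing2}. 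Taking $\lambda$ large absorbs the $\lambda^{-1}$ terms and the $Ca^2$ term is a small fraction of $\tfrac{\eps_2}{5}e^{\frac{2Mt}{T}-Ttd}$ once $a$ (equivalently $\eps_2$) is small relative to $M,T$; collecting terms gives \eqref{growth3}.

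\textbf{Main obstacle.} The crux is the last balance: the principal term $\sim\eps_2 e^{\frac{2Mt}{T}-Ttd}$ must dominate the self-interaction error, which in the scale-invariant norm $\dot H^2$ is only of size $\sim a^2$; keeping it a genuinely small fraction forces $a$ small relative to $M,T$ and then $\lambda$ large, and one must check compatibility with $\|\rho_{pert}(\cdot,0)\|_{H^2}\approx\eps_2$ and with the $H^4$-dependence of the constants in Lemmas~\ref{gluing1}--\ref{gluing2}. The oscillation in $x_1$ is what reconciles these constraints: it makes $\partial_{x_1}^2$ the dominant contribution to $\|\rho_{pert}\|_{H^2}$ (so the growth is honestly $e^{2\int_0^t|k|}$) while simultaneously rendering the perturbation's effect on $k_{new}$ and $\partial_t k_{new}$ negligible by oscillatory cancellation.
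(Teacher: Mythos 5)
Your architecture matches the paper's --- decompose $\rho_{new}=\tilde\rho+\tilde\rho_{pert}$, approximate $\tilde\rho_{pert}$ by a transport solution driven by the linearized hyperbolic velocity, and use an oscillation in $x_1$ so that $\partial_{x_1}^2$ carries the $H^2$ mass and the deformation's $e^{2\int_0^t|k|}$ factor becomes honest growth. However, there is a genuine gap in the way you control the self-interaction error in the $H^2$ lower bound, and your own "main obstacle" discussion already flags it without resolving it.

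The gap is in the invocation of Lemma~\ref{gluing2}. You take $f=\phi\sin(Ax_1)$ with $A$ a \emph{fixed} large constant and $a\sim\epsilon_2/\|f\|_{\dot H^2}\sim\epsilon_2/A^2$, then quote $\|\bar\rho_{pert}-\rho_{pert}\|_{\dot H^2}\le Ca^2$. But the constant $C$ in Lemma~\ref{gluing2} (and the threshold $a_0$) depends on $\|\rho_{\text{in}}\|_{H^4}$: tracking the Gr\"onwall argument there gives $C\sim e^{5M}T\,\|f\|_{H^4}^2$ and $a_0\sim (e^MT\|f\|_{H^4})^{-1}$. Since $\|f\|_{H^4}\sim A^4$, you get $Ca^2\sim e^{5M}T\,A^4\epsilon_2^2$, which \emph{grows} with $A$, and $a\le a_0$ forces $\epsilon_2\lesssim A^{-2}$. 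Because $A$ is a fixed parameter chosen before $\lambda$, and $\lambda\to\infty$ does not improve this term, the error is \emph{not} a small fraction of $\tfrac{\epsilon_2}{5}e^{\frac{2Mt}{T}-Ttd}$ unless you impose $\epsilon_2\lesssim e^{-5M}/T$. The lemma, however, must hold for arbitrary given $\epsilon_2$ (and in the iteration $M_n\to\infty$ while $\epsilon_2$ is prescribed externally), so this restriction cannot be waived. Your closing remark --- ``forces $a$ small relative to $M,T$'' --- concedes exactly this: the argument as written proves a weaker, conditional statement.

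The paper's proof avoids this precisely by \emph{not} treating the perturbation as a black box for Lemma~\ref{gluing2}. It uses $\rho_{pert,N}(x)=f(Nx)\sin(N^{1+1/10}x_1)/N^{1+1/5}$, i.e.\ in the rescaled variables an amplitude $N^{-1/5}$ and a frequency $N^{1/10}$ that both depend on the scaling parameter $N$; this keeps $\|\rho_{pert,N}\|_{H^2}\approx\|f\|_{L^2}$ while letting the $L^2/C^0$ size of the perturbation and its induced velocity tend to $0$. Crucially, the self-interaction term $\mathbf{u}[\rho_2]\cdot\nabla\rho_2$ is estimated \emph{with} Lemma~\ref{boundvelocity}, so the oscillatory cancellation makes $\|\mathbf{u}[\rho_2]\|_{C^j}\lesssim\ln N\cdot N^{(j-3)/10}$ and hence $\|F\|_{H^s}\lesssim\ln N/N^{(4-s)/10}$, giving $\|W_{pert}\|_{H^3}\lesssim t\ln N/N^{1/10}\to0$ unconditionally. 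In other words, the oscillatory kernel bound is indispensable for the \emph{self-interaction} estimate, not merely for the $k_{new}$ correction, and a fixed frequency $A$ cannot substitute for the $N$-dependent one. (Relatedly, because $\|N\rho_{pert,N}(\cdot/N)\|_{H^4}\sim N^{2/10}$ is unbounded, the hypotheses of Lemma~\ref{gluing1} are not met for the paper's perturbation either, which is why the paper redoes the gluing estimates from scratch in this lemma rather than invoking Lemma~\ref{gluing1} as you do.) To repair your proof you would need to make $A$ scale with the envelope parameter and rerun the $H^3$ energy estimate for $W_{pert}$ using Lemma~\ref{boundvelocity} on $\mathbf{u}[\rho_2]\cdot\nabla\rho_2$, rather than appealing to Lemma~\ref{gluing2}.

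A minor additional caveat: your claim that $g(t)=\int_0^t|k|$ satisfies $g(t)\ge\frac{Mt}{T}-\frac{dTt}{2}$ because $g-\frac{d}{2}t^2$ is concave is correct, but the paper proves the same inequality via the monotonicity of $k(t)+td$ and the mean-value comparison of averages; both are fine.
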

\begin{Rmk}
    We choose to prove the convergence in $C^{2.5}$ for \eqref{convh2K}, but this could be done in any $C^{2,\alpha}$ with $\alpha\in(0,1)$.
\end{Rmk}

\begin{proof}
    We start by considering 
    $$\rho_{pert,N}(x)=f(Nx)\frac{\sin(N^{1+\frac{1}{10}}x_1)}{N^{1+\frac{1}{5}}},$$
    with $f(x)$ a fixed, smooth, compactly supported function such that $f(x)=f(-x)$, and $\text{supp}f\subset B_{1}(0)\setminus B_{\frac{1}{10}}(0)$. 
    We want to show that, for $N$ big enough, the solution $\rho_{new,N}(x,t)$ with initial conditions $\rho(x,0)+\rho_{pert,N}(x)$ has the desired properties.
    Note that $\rho_{new,N}(x,t)+x_2\in H^{3}$ at least for some short period of time, and for those times we can write $\rho_{new,N}(x,t)=\rho_{N}(x,t)+\rho_{pert,N}(x,t)$
    with
    \begin{align}
        \partial_{t}\rho_{N}(x,t)+\mathbf{u}[\rho_{N}+\rho_{pert,N}](x,t)\cdot\nabla\rho_{N}(x,t)&=0, \notag\\
        \rho_{N}(x,0)&=-x_2+\rho(x,0)
    \end{align}
    and 
    \begin{align}
        \partial_{t}\rho_{pert,N}(x,t)+\mathbf{u}[\rho_{N}+\rho_{pert,N}](x,t)\cdot\nabla\rho_{pert,N}(x,t)&=0,\notag\\
        \rho_{pert,N}(x,0)&=f(Nx)\frac{\sin(N^{1+\frac{1}{10}}x_1)}{N^{1+\frac{1}{5}}}.
    \end{align}
    We then hope that $\rho(x,t)$ is a good approximation of $\rho_{N}(x,t)$, and if we define
    \begin{align}
    \partial_{t}\bar{\rho}_{pert,N}(x,t)+(\partial_{x_{1}}u_{1}[\rho_{N}](x=0, t))(x_{1},-x_{2})\cdot\nabla\bar{\rho}_{pert,N}(x,t)&=0, \\
    \bar{\rho}_{pert,N}(x,0)&=f(Nx)\frac{\sin(N^{1+\frac{1}{10}}x_1)}{N^{1+\frac{1}{5}}},
    \end{align}
    that $\bar{\rho}_{pert,N}(x,t)$ is a good approximation of $\rho_{pert,N}(x,t)$.

    We will start by showing some control for the times $t\in[0,t_{N}]$, where $t_{N}$ is the maximum time such that, for $t\in[0,t_{N}]$, it holds that
    \begin{itemize}
        \item $\text{sup}_{t\in[0,t_{N}]}\|\rho_{N}(x,t)-\rho(x,t)\|_{H^3}\leq 1$ for $t\in[0,t_{N})$,
        \item $\|N\rho_{pert,N}(\frac{x}{N},t)-N\bar{\rho}_{pert,N}(\frac{x}{N},t)\|_{ H^{3}}\leq 1$ for $t\in[0,t_{N})$,
        \item $\|N\bar{\rho}_{pert,N}(\frac{x}{N},t)\|_{H^3}\leq \text{e}^{3(M+T)}\|N\bar{\rho}_{pert,N}(\frac{x}{N},t=0)\|_{H^3}$ for $t\in[0,t_{N})$,
    \end{itemize}
    and note that $t_{N}>0$ and that for the times considered we have  $\rho_{new,N}(x,t)+x_2\in H^3$. We will assume, without loss of generality, that $t_{N}\leq T$.
    Note that, by the evolution equation for $\rho_{N,pert}$, the bounds for $k(t)$ and the (assumed) control of $\|\rho_{N}(x,t)-\rho(x,t)\|_{H^3}$ we can bound
    $$\|N\bar{\rho}_{pert,N}(\frac{x}{N},t)\|_{H^3}\leq \text{e}^{3\int_{0}^{T}\partial_{x_{1}}u_{1}[\rho_{N}](x=0, t)dt}\|N\bar{\rho}_{pert,N}(\frac{x}{N},t=0)\|_{H^3}\leq \text{e}^{3(M+T)}\|N\bar{\rho}_{pert,N}(\frac{x}{N},t=0)\|_{H^3}.$$
    
    Using the properties for $t\in[0,t_{N}]$, if we define $\bar{K}:=\text{sup}_{t\in [0,t_{N}]}\|\mathbf{u}[\rho_{new}](x,t)\|_{C^1}$, then, using that $\mathbf{u}[x_2]=0$, that the scaling $N f(\cdot/N)$ can only make the $C^1$ norm grow for $\lambda\geq 1$ and $H^3 \hookrightarrow C^1$, 
    \begin{align*}
        \bar{K}&\leq \|\mathbf \mathbf{u}[\rho_{N}](x,t)\|_{C^1}+\|\mathbf \mathbf{u}[\rho_{pert,N}](x,t)\|_{C^1}=\|\mathbf \mathbf{u}[\rho_{N}+x_2 ](x,t)\|_{C^1}+\|\mathbf \mathbf{u}[\rho_{pert,N}](x,t)\|_{C^1}\\ &
        \|\mathbf \mathbf{u}[\rho_{N}+x_2 ](x,t)\|_{C^1}+\|N\mathbf \mathbf{u}[\rho_{pert,N}](\frac{x}{N},t)\|_{C^1}\leq\|\rho_{N}(x,t)+x_2\|_{H^3}+ \|N\rho_{pert,N}\left(\frac{x}{N},t\right)\|_{ H^{3}}\\
        &\leq \|\rho_{N}(x,t)+x_2\|_{H^3}+ \left\|N\bar{\rho}_{pert,N}\left(\frac{x}{N},t\right)\right\|_{ H^{3}}+\left\|N\rho_{pert,N}\left(\frac{x}{N},t\right)-N\bar{\rho}_{pert,N}\left(\frac{x}{N},t\right)\right\|_{ H^{3}}\leq C,
    \end{align*}
    where $C>0$ does not depend on $N$, so in particular, since $\mathbf u [\rho](x=0)$, the support of $\rho_{pert, N}(x,t)$ is driven by the equation
    \begin{align*}
        \dot \Phi(x, t) = \mathbf{u}[\rho_N+\rho_{pert, N}](\Phi (x, t))
    \end{align*}
    and by the Cauchy-Lipschitz Theorem 
    \begin{align*}
        |\Phi(x, 0)| \text{e}^{-Ct} \lesssim |\Phi(x, t)| \lesssim  |\Phi(x, 0)| \text{e}^{Ct}.
    \end{align*}
    Therefore, 
    \begin{equation}\label{supportpertN}
        \text{supp}(\rho_{pert,N})\subset\{x:|x|\leq { N^{-1}}\text{e}^{Ct}\}
    \end{equation}
    and $\text{supp}(\rho_{N})\cap B_{\delta}(0)=\emptyset$ for some small $\delta>0$,
    so that for $N$ big we have separation between the supports of $\rho_{pert,N}$ and $\rho_{N}$ and therefore, for any $i=0,1,2,3$, for $|x|\in \text{supp}(\rho_{N})$, and calling $D^{i}$ an arbitrary derivative of order $i$
    $$|D^{i}\mathbf{u}[\rho_{pert,N}](x)|\leq C\frac{\|\rho_{pert,N}\|_{L^1}}{|x|^{2+i}}\leq \frac{C_{i}}{|x|^{2+i}N^3},$$
    and note that the same bound holds for $|x|\in \text{supp}(\rho)$.
    Now, defining $W:=\rho_{N}(x,t)-\rho(x,t)$, we have that
    $$\partial_{t}W+\mathbf{u}[W+\rho]\cdot\nabla W+\mathbf{u}[W]\cdot\nabla \rho+\mathbf{u}[\rho_{pert,N}]\cdot\nabla(W+\rho)=0.$$
    This system allows us to obtain bounds for the evolution of the $H^{3}$ norm of $W$, namely
    $$\partial_{t}\|W\|_{H^3}\leq C[\|W\|_{H^3}(\|W\|_{H^3}+\|\rho(x,t)+x_2\|_{H^4}+\|\mathbf{u}[\rho_{pert,N}]\|_{C^3(\text{supp}(W))})+\|\mathbf{u}[\rho_{pert,N}]\cdot\nabla\rho\|_{H^3}]. $$
    But then, using that
    $$\|\mathbf{u}[\rho_{pert,N}]\cdot\nabla\rho\|_{H^3}\leq \frac{C}{N^3}(1+\|\rho+x_2\|_{H^4})$$
    and $\|\rho+x_2\|_{H^4}\leq C$, and restricting ourselves to times such that $\|W\|_{H^3}\leq 1$ we have
    $$\partial_{t}\|W\|_{H^3}\leq C\|W\|_{H^3}+\frac{C}{N^3}. $$
    This gives us $\|W\|_{H^3}\leq \frac{1}{N^3}\text{e}^{Ct}$, and in particular 
    \begin{equation}\label{CH3W1}
        \partial_{t}\|W\|_{H^3}\leq \frac{C}{N^3}, \quad\|W\|_{H^3}\leq \frac{Ct}{N^3}.
    \end{equation}
    Next, we will compare $\rho_{1}(x,t)=N\rho_{pert,N}(\frac{x}{N})$ and $\rho_{2}(x,t)=N\bar{\rho}_{pert,N}(\frac{x}{N})$, which fulfil the evolution equations
    \begin{align}
    \partial_{t}\rho_{1}(x,t)+(\mathbf{u}[N\rho_{N}(\frac{\cdot}{N},t)](x,t)+\mathbf{u}[\rho_{1}](x,t))\cdot\nabla\rho_{1}(x,t)&=0, \\
    \partial_{t}\rho_{2}(x,t)+(\partial_{x_{1}}u_{1}[\rho_{N}](x=0,t))(x_{1},-x_{2})\cdot\nabla\rho_{2}(x,t)&=0, \\\rho_{1}(x,t=0)=\rho_{2}(x,t=0)&=f(x)\frac{\sin(N^{\frac{1}{10}}x_{1})}{N^{\frac{1}{5}}}.
    \end{align}

    Note that, by the boundedness assumption for $\|\mathbf{u}[\rho_{N}+x_2]\|_{C^1}+\|\mathbf{u}[\rho_{pert,N}]\|_{C^1}$, we know that
    $\text{supp}(\rho_{1}),\text{supp}(\rho_{2})\subset B_{C}(0)$ for some constant $C$ independent of $N$, as long as $t\in[0,\text{min}(t_{N},T)]$. Then, similarly as before, defining $\rho_{1}=W_{pert}+\rho_{2}$, we have that
    \begin{align*}
        \partial_{t}W_{pert}&+\left[\mathbf{u}[N\rho_{N}(\frac{\cdot}{N},t)](x,t)-\partial_{x_{1}}u_{1}[\rho_{N}(\cdot,t)](x=0)(x_{1},-x_{2})\right]\cdot\nabla\rho_{2}\\
        &+\left[\mathbf{u}[N\rho_{N}(\frac{\cdot}{N},t)](x,t)\right]\cdot\nabla W_{pert}+\left[\mathbf{u}[\rho_{2}+W_{pert}]\right]\cdot\nabla (W_{pert}+\rho_{2})=0.
    \end{align*}
    By taking derivatives in the evolution equation we can obtain
    \begin{align}\label{WpertH3}
    \partial_{t}\|W_{pert}\|^2_{H^3}\leq& C\|W_{pert}\|_{H^3}[\|W_{pert}\|^2_{H^3}+\sum_{j=0}^{3}(\|W_{pert}\|_{H^{3-j}}\|\rho_{2}\|_{C^{1+j}})\nonumber\\
    & \quad +\sum_{j=0}^{2}\|W_{pert}\|_{H^{3-j}}\|\mathbf{u}[\rho_{2}]\|_{C^{1+j}}+\|F\|_{H^{3}}\nonumber \\
      & \quad +\sum_{i=1,2}\|W_{pert}\|_{H^{3}} \|(\partial_{x_{i}}u_{i}[N\rho_{N}(\frac{\cdot}{N},t)](x,t) \|_{C^3(\text{supp}(W_{pert}))}],
    \end{align}
    where
    $$F:=\mathbf{u}[N\rho_{N}(\frac{\cdot}{N},t)](x,t)-\partial_{x_{1}}u_{1}[\rho_{N}(\cdot,t)](x=0)(x_{1},-x_{2})]\cdot\nabla\rho_{2}+\mathbf{u}[\rho_{2}]\cdot\nabla \rho_{2}.$$
    Similarly, in $L^2$ we have
    \begin{align}\label{L2W}\partial_{t}\|W_{pert}\|^2_{L^2}\leq& C\|W_{pert}\|_{L^2}[\|W_{pert}\|_{L^2}\|\rho_{2}\|_{C^1}+\|F\|_{L^2}].
    \end{align}
    To bound $F$, we first note that, if we define 
    \begin{align}\label{eq:D}
    D(t)=\text{e}^{-\int_{0}^{t}\partial_{x_{1}}u_{1}[\rho_{N}(\cdot,t)](x=0)},
    \end{align}
    then
    $$\rho_{2}(x,t)=f(D(t)x_{1},D(t)^{-1}x_{2})\frac{\sin(D(t)N^{\frac{1}{10}}x_{1})}{N^{\frac{1}{5}}}$$
    and since $D(t)\leq \text{e}^{M}$, we can apply Lemma \ref{boundvelocity}, the bounds for the support of $\rho_{2}$ and direct computations to get, for $s \in [0,3]$,
    \begin{align*}
    \|u_{1}[\rho_{2}]\partial_{x_{1}}\rho_{2}\|_{H^s}&\leq \sum_{i=0}^{s}\|u_{1}[\rho_{2}]\|_{C^{i}}\|\rho_{2}\|_{H^{s+1-i}}\leq \frac{C\ln(N)}{N^{\frac{4-s}{10}}}\\
    \|u_{2}[\rho_{2}]\partial_{x_{2}}\rho_{2}\|_{H^s}&\leq \sum_{i=0}^{s}\|u_{1}[\rho_{2}]\|_{C^{i}}\|\rho_{2}\|_{H^{s+1-i}}\leq \frac{C\ln(N)}{N^{\frac{4-s}{10}}}\\
    \|\mathbf{u}[\rho_{2}]\cdot\nabla\rho_{2}\|_{H^s}&\leq \frac{C_{s}\ln(N)}{N^{\frac{4-s}{10}}}.
    \end{align*}

    Furthermore, using the bounds for the support of $\rho_{N}(x,t)$, for $j=0,1,...$
    \begin{align*}
        \left\|\frac{\partial^{j} }{\partial x_{1}^{j-l}\partial x_{2}^{l}}\mathbf{u}[N\rho_{N}(\frac{\cdot}{N},t)+x_2](x,t)\right\|_{L^{\infty}(B_{R}(0))}&\leq \left\|N^{1-j}\frac{\partial^{j} }{\partial x_{1}^{j-l}\partial x_{2}^{l}}\mathbf{u}[\rho_{N}(\cdot,t)+x_2](x,t)\right\|_{L^{\infty}(B_{\frac{R}{N}}(0))}\\
        &\leq C_{R,j}N^{1-j}\|\rho_{N}(\cdot,t)+x_2\|_{L^2}\leq \frac{C_{R,j}}{N^{j-1}},
    \end{align*}

    and since $\rho_{N}$ is symmetric, $\mathbf{u}[\rho_{N}](x=0)=0$,
    $$\|[\mathbf{u}[N\rho_{N}(\frac{\cdot}{N},t)+x_2)](x,t)-\partial_{x_{1}}u_{1}[\rho_{N}(\cdot,t)+x_2](x=0)(x_{1},-x_{2})]\|_{C^{3}(B_{R}(0))}\leq \frac{C}{N}.$$
    Since we also have that
    $$\|\rho_{2}\|_{C^j}\leq \frac{C_{j}}{N^{\frac{2-j}{10}}}$$
    using the bounds for the support of $\rho_{2}$ we get, for $s\in[0,3]$,
    $$\left\|[\mathbf{u}[N\rho_{N}(\frac{\cdot}{N},t)](x,t)-\partial_{x_{1}}u_{1}[\rho_{N}(\cdot,t)](x=0)(x_{1},-x_{2})]\cdot\nabla\rho_{2}\right\|_{H^s}\leq C N^{-\frac{1-s}{10}-1}.$$
    Combining these bounds together, we have, for $s\in[0,3]$,
    $$\|F\|_{H^s}\leq C\ln(N) N^{-\frac{4-s}{10}}.$$
    Then, applying \eqref{L2W} together with the bounds for $\rho_{2}$, we have that, for $t\in [0,t_{N}]$
    $$\|W_{pert}\|_{L^2}\leq \frac{Ct\ln(N)}{N^{\frac{4}{10}}}, \quad \partial_{t}\|W_{pert}\|_{L^2}\leq \frac{C\ln(N)}{N^{\frac{4}{10}}}.$$
    Now, since for $t\in[0,t_{N}]$ we have $\|W_{pert}\|_{H^3}\leq 1$, we can apply the bound for $W_{pert}$ in $L^2$, the interpolation inequality and the bounds for $\rho_{2}$ to  the evolution equation for $\|W_{pert}\|_{H^3}$ \eqref{WpertH3} to get
    \begin{align*}
    \partial_{t}\|W_{pert}\|_{H^3}\leq& C\|W_{pert}\|_{H^3}+\frac{C\ln(N)}{N^{\frac{1}{10}}}
    \end{align*}
    so that 
    \begin{equation}\label{CH3W2}
        \|W_{pert}\|_{H^3}\leq \frac{Ct\ln(N)}{N^{\frac{1}{10}}},\quad \partial_{t}\|W_{pert}\|_{H^3}\leq \frac{C\ln(N)}{N^{\frac{1}{10}}}.
    \end{equation}
    Combining \eqref{CH3W1} and \eqref{CH3W2} and taking $N$ big we have that $t_{N}\geq T$, and thus the bounds \eqref{CH3W1} and \eqref{CH3W2} are true for the times we consider.
    
    We now note that, since $\rho_{new,N}$ is a solution to \eqref{eq:ipm} with initial data of the form \eqref{eq:data-stable}, we have that
    \begin{align*}
        \de_{t}(\rho_{new,N}+x_2) + \mathbf{u}[ \rho_{new,N}+x_2]\cdot \nabla (\rho_{new,N}+x_2) - u_2[\rho_{new,N}+x_2]=0,
    \end{align*}
    and so
    $$\de_{t}\|\rho_{new,N}+x_2\|_{H^4}\leq C (\|\rho_{new,N}+x_2\|_{H^4}\|\rho_{new,N}+x_2\|_{H^3}+\|\rho_{new,N}+x_2\|_{H^4})$$
    so in particular the bounds we have obtained for the $H^3$ norm of $\rho_{new,N}$ imply that it also belongs to $H^4$ for $t\in[0,T]$. 
    We are now ready to start showing the properties of $\rho_{new,N}$. First, we note that, if we choose $f(x)$ such that $\|f(x)\|_{L^2}=\frac{\epsilon_{2}}{2}$, then we have
    $$\|\rho_{pert,N}(x,0)\|_{H^2}=\left\|f(Nx)\frac{\sin(N^{1+\frac{1}{10}}x_1)}{N^{1+\frac{1}{5}}}\right\|_{H^2}\leq \frac{\epsilon_{2}}{2}+\frac{C}{N^{\frac{1}{10}}},$$
    so that taking $N$ big enough gives us $\|\rho_{pert,N}(x,0)\|_{H^2}\leq \epsilon_{2}$ as desired. Furthermore, we have
    \begin{align*}
        \|\rho_{new,N}(x,t)-\rho(x,t)\|_{C^1}&\leq \|\rho_{N}(x,t)-\rho(x,t)\|_{C^1}+\|\rho_{pert,N}(x,t)-\bar{\rho}_{pert,N}(x,t)\|_{C^1}+\|\bar{\rho}_{pert,N}(x,t)\|_{C^1}\\
        & \leq \|\rho_{N}(x,t)-\rho(x,t)\|_{H^3}+\left\|N\rho_{pert,N}(\frac{x}{N},t)-\bar{\rho}_{pert,N}(\frac{x}{N},t)\right\|_{H^3}+\frac{C}{N^{\frac{1}{10}}}\\
        &\leq \frac{C\ln(N)}{N^{\frac{1}{10}}},
    \end{align*}
    so taking $N$ big we get $\|\rho_{new,N}-\rho\|_{C^1}\leq \epsilon_{3}$, $\|\rho_{new,N}+x_2\|_{C^1}<K$.
    Next, would like to show that $\de_{t}\de_{x_{1}}u_1[\rho_{new,N}(x,t)](x=0)>-d$.
    Since
    $$|\partial_t \partial_{x_{1}} u_{1}[\rho_{new,N}-\rho]|= |\partial_t\partial_{x_{1}} [u_{1}[\rho_{N}-\rho]+u_{1}[\rho_{pert,N}-\bar{\rho}_{pert,N}]+u_{1}[\bar{\rho}_{pert,N}]]|$$
    \begin{align}
    |\partial_t\partial_{x_{1}} u_{1}[\rho_{N}-\rho]|&\leq \partial_{t}\|\rho_{N}-\rho\|_{H^3}\leq \frac{C}{N^3},\\
    |\partial_t\partial_{x_{1}} u_{1}[\rho_{pert,N}-\bar{\rho}_{pert,N}]|&\leq \partial_{t}\|\rho_{pert,N}-\bar{\rho}_{pert,N}\|_{H^3}\leq \frac{C\ln(N)}{N^{\frac{1}{10}}},
    \end{align}
    which are as small as we want, we only need to worry about $\partial_{t}\partial_{x_{1}} u_{1}[\bar{\rho}_{pert,N}(\cdot,t)]$.
    But, using that
    \begin{align}
    \partial_{t}\partial_{x_{1}} u_{1}[\bar{\rho}_{pert,N}(\cdot,t)](x=0)& =\partial_{x_{1}} u_{1}[\partial_{t}\bar{\rho}_{pert,N}(\cdot,t)](x=0),
    \end{align}
    it is enough to obtain bounds for $\|u_{1}[\de_{t}  \bar{\rho}_{pert,N}(\cdot,t)]\|_{C^1}$.
    But, since 
    $$\de_{t} \bar{\rho}_{pert,N}(\cdot,t)=-\left[k(t)(x_{1},-x_{2})\cdot\nabla\left(f(ND(t)x_{1},ND(t)^{-1}x_{2})\frac{\sin(D(t)N^{1+\frac{1}{10}}x_{1})}{N^{1+\frac{1}{5}}}\right)\right]$$
    and using
    \begin{align*}
        \|u_{1}(\de_{t} \bar{\rho}_{pert,N}(\cdot,t))\|_{C^1}&\leq \left\|Nu_{1}(\de_{t} \bar{\rho}_{pert,N}(\cdot,t))\left(\frac{x}{N},t\right)\right\|_{C^1}\\
        &=\left\|u_{1}\left(\left[k(t)(x_{1},-x_{2})\cdot\nabla\left(f(D(t)x_{1},D(t)^{-1}x_{2})\frac{\sin(D(t)N^{\frac{1}{10}}x_{1})}{N^{\frac{1}{5}}}\right)\right]\right)\right\|_{C^1}
    \end{align*}
    which can be written as the sum of functions of the form $h(x)\sin(D(t)N^{\frac{1}{10}}x_{1}+\theta_{0})$, $\|h(x)\|_{C^1}\leq C$, we can apply Lemma \ref{boundvelocity} to get
    $$\|u_{1}(\de_{t} \bar{\rho}_{pert,N}(\cdot,t))\|_{C^1}\leq \frac{C\ln(N)}{N^{\frac{1}{10}}}.$$
    With all these properties we get
    $$\de_{t}\de_{x_{1}}u_{1}[\rho_{new,N}](x=0,t)> \partial_t \partial_{x_{1}} u_{1}[\rho](x=0,t)-|\partial_t \partial_{x_{1}} u_{1}[\rho_{new,N}-\rho](x=0,t)|>\partial_t \partial_{x_{1}} u_{1}[\rho](x=0,t)-\frac{C\ln(N)}{N^{\frac{1}{10}}},$$
    and using that $\partial_t \partial_{ x_{1}} u_{1}[\rho](x=0,t)>-d$ and the continuity in time of $\partial_t \partial_{x_{1}} u_{1}[\rho](x=0,t)$ for $t\in[0,T]$ we get, for big $N$,
    $$\de_{t}\de_{x_{1}}u_{1}[\rho_{new,N}](x=0,t)> \partial_t \partial_{x_{1}} u_{1}[\rho](x=0,t)-|\partial_t \partial_{x_{1}} u_{1}[\rho_{new,N}-\rho](x=0,t)|>-d.$$
    With this, we have all the conditions in \eqref{growth2}.
    Furthermore, since 
    $$|\de_{t}\de_{x_{1}}u_{1}[\rho_{new,N}](x=0,t)- \partial_t \partial_{x_{1}} u_{1}[\rho](x=0,t)|\leq \frac{C\ln(N)}{N^{\frac{1}{10}}},$$
    by taking $N$ big we obtain \eqref{growth1}.
    To finish, we note that, for $N$ big,
    \begin{align*}
        \|\rho_{new,N}+x_2\|_{H^2}&\geq \|\rho_{pert,N}\|_{H^2}\geq \|\bar{\rho}_{pert,N}\|_{H^2}-\|\bar{\rho}_{pert,N}-\rho_{pert,N}\|_{H^2}\\
        &=\|\bar{\rho}_{pert,N}\|_{H^2}-\left\|N\bar{\rho}_{pert,N}(\frac{x}{N},t)-N\rho_{pert,N}(\frac{x}{N},t)\right\|_{H^2}\\
        &\geq \|\bar{\rho}_{pert,N}\|_{H^2}-\frac{C}{N^{\frac{1}{10}}}\\
        &\geq \frac{3}{4}\|f(D(t)x_{1},D(t)^{-1}x_{2})\de^2_{x_{1}}(\sin(ND(t) x_{1}))\|_{L^2}-\frac{C}{N^{\frac{1}{10}}}\\
        & \geq
        \frac{3\epsilon_{2}}{8}D(t)^2-\frac{C}{N^{\frac{1}{10}}}\geq \frac{\epsilon_{2}}{4}D(t)^2,
    \end{align*}
where in the second-to-last line we used scaling invariance,
so that the only remaining thing is to obtain lower bounds for $$D(t)=\text{e}^{-\int_{0}^{t}k(s)\, ds},$$ with $k(t)=(\de_{x_{1}}u_{1}[\rho(\cdot,t))](x=0)$. Using the bound for the derivative of $k(t)$, we have that $k(t)+td$ is monotone increasing, and therefore
$$-\int_{0}^{t}(k(s)+sd)\, ds\geq -\frac{t}{T}\int_{0}^{T}(k(s)+sd)\, ds=\frac{t}{T}(M-\frac{T^2d}{2})$$
where we used that, if $g(t)$ is monotone decreasing, then, for $t_{2}\geq t_{1}$
$$\frac{1}{t_{1}}\int_{0}^{t_{1}}g(s)\, ds\geq \frac{1}{t_{2}}\int_{0}^{t_{2}}g(s)\, ds,$$
and thus
$$D(t)\geq \text{e}^{\frac{tM}{T}-\frac{tTd}{2}}$$
as we wanted to prove. Finally, to prove \eqref{convh2K}, we note that
$$\|\rho(x,t)-\rho_{new}\|_{C^{2.5}(\R^2\setminus B_{\delta})}=\|\rho(x,t)-\rho_{N}(x,t)+\rho_{pert,N}(x,t)\|_{C^{2.5}(\R^2\setminus B_{\delta})}=\|\rho(x,t)-\rho_{N}(x,t)\|_{C^{2.5}(\R^2\setminus B_{\delta})}$$
where we used that $\rho_{pert,N}$ is supported in $B_{\delta}$ for $N$ big (using \eqref{supportpertN}). But we already showed in \eqref{CH3W2} that
\begin{align*}
    \|\rho(x,t)-\rho_{N}(x,t)\|_{H^3(\R^2)}&\leq\frac{Ct\ln(N)}{N^{\frac{1}{10}}};\\
    \|\rho_{new}(x,t)-\rho(x,t)\|_{H^{4}}&=\|(\rho_{new}(x,t)+x_{2})-(\rho(x,t)+x_{2})\|_{H^{4}}\leq C,
\end{align*}
and thus taking $N$ big and using interpolation and Sobolev embedding gives us the desired bound in $C^{2.5}$. 
As for the $L^2$ norm, we just use
$$\|\rho_{new}(x,t)-\rho(x,t)\|_{L^2}\leq \|\rho(x,t)-\rho_{N}(x,t)\|_{H^3(\R^2)}+\|\rho_{pert,N}\|_{L^2}\leq \frac{Ct\ln(N)}{N^{\frac{1}{10}}}$$
which finishes the proof.

\end{proof}
\subsection{Proof of Theorem \ref{thm:main}}
    We will construct our solution using an iterative procedure 
    $$\rho(x,t)=\text{lim}_{n\rightarrow\infty}\rho_{n}.$$
    We start by choosing $\rho_{0}(x,t)$. For this, given our value of $\epsilon$, we apply Lemma  \ref{lem:stable-exact-sol} with $\epsilon_{0}=\frac{\epsilon}{4}$.
    
    Note that $\rho_{0}(x,t)+x_{2}\in H^4$ for $t\in[0,T_{\epsilon}]$, $\de_{t} \de_{x_1}u_{1}[\rho_{0}](x,t)>-K_{\epsilon}$, $\|\rho_{0}(x,t)+x_{2}\|_{C^1}<K_{\epsilon}$ for some $K_{\epsilon}>2$, $\de_{x_1}u_{1}[\rho_{0}](x,t)<0$ and $\rho_0(x,t)$ is supported far from the origin and is symmetric.
    We define
    $$M_{0}:=-\int_{0}^{T}\de_{x_1}u_{1}[\rho_{0}](x,t)\, dt$$
    and
    $$\delta_{0}:=\text{sup}\{\delta:\supp(\rho_{0}(x,t))\cap B_{\delta}(0)=\emptyset\}.$$

    We will now proceed to construct our solution inductively. We start by noting that, through the steps of the construction, both $\rho_{n}$ and $\tilde\rho_{n}$ (both to be defined later) are symmetric 
    and that their properties will hold for $t\in[0,T_{\epsilon}]$, so we will omit mentioning this every time.
    Now, given $\rho_{n}$, with
    $$M_{n}:=-\int_{0}^{T_{\epsilon}}\de_{x_1}u_{1}[\rho_{n}](x,t)\, dt,$$
    $$\delta_{n}:=\text{sup}\{\delta:\supp(\rho_{n}(x,t))\cap B_{\delta}(0)=\emptyset\},$$
    we first construct $\tilde{\rho}_{n}$ the following way:
     we first apply Lemma \ref{inductiondeform} with $\epsilon_{1}=\|\rho_{n}\|_{H^2}$, $\epsilon_{2}=\frac{\nu}{n+1}$ ($\nu$ small to be fixed later), $\epsilon_{3}=4^{-n}$, $T=T_{\epsilon}$, $K=d=K_{\epsilon}$, $M=M_{n}$ to obtain $\tilde{\rho}_{n}$ fulfilling
     \begin{align*}
        \tilde{k}_{n}(t):&=\de_{x_{1}}u_1[\tilde{\rho}_{n}](x=0,t)<0,\; 
        \de_{t}\tilde{k}_{n}(t)>-K_{\epsilon}, \\
        \tilde{M}_{n}:&=-\int_{0}^{T_{\epsilon}}\de_{x_1}u_{1}[\tilde{\rho}_{n}](x,t)\geq M_{n}+\frac{\nu}{n+1}c_{M_{n}},\\
        \|\tilde{\rho}_{n}(x,t)+x_2\|_{C^1}&<K_{\epsilon},\; \|\tilde{\rho}_{n}(x,t)-\rho_{n}(x,t)\|_{C^1}\leq \frac{\nu}{n+1},\\
        \|\tilde{\rho}_{n}(x,t)-\rho_{n}(x,t)\|_{C^{2.5}(\R^2\setminus B_{\delta_{n}})}&\leq \frac{1}{4^n},\; \|\tilde{\rho}_{n}(x,t)-\rho_{n}(x,t)\|_{L^{2}(\R^2)}\leq \frac{1}{4^n},   
    \end{align*}
    $$\tilde{\delta}_{n}:=\text{sup}\{\delta:\text{supp}(\tilde{\rho}_{n}(x,t))\cap B_{\delta}(0)=\emptyset\},\quad \text{supp}(\tilde{\rho}_{n}-\rho_{n})\cap \text{supp}(\rho_{n})=\emptyset$$
    \begin{align}\label{eq:initial-data-nu}
    \|\tilde{\rho}_{n}(x,0)-\rho_{n}(x,0)\|_{H^2}\leq \frac{\nu}{n+1},\quad \|\tilde{\rho}_{n}\|_{C^{1}}<K_{\epsilon},
    \end{align}
    with $c_{M}$ the constant in Lemma \ref{inductiondeform}. 
    
    If we just used $\rho_{n+1}=\tilde{\rho}_{n}$, this would allow us to construct a solution generating a velocity producing a very strong deformation at the origin, and an infinite deformation if we take the limit when $n$ tends to $\infty$. However, this is not enough to ensure growth of the $H^2$ norm. To make sure that the solution will grow appropriately, we add an extra perturbation $\rho_{n+1}=\tilde{\rho}_n+\rho_{pert}$ using Lemma \ref{inductiongrowth}, with this perturbation growing rapidly in $H^2$, which will ensure the desired growth.

    However, to ensure that the added perturbation becomes large in \(H^2\), the deformation generated by \(\tilde{\rho}_{n}\) must be sufficiently large.
     In order to do this, we will only add this perturbation when the deformation generated by $\tilde{\rho}_{n}$ has grown enough, in particular we will add this perturbation every time $\text{e}^{M_{n}}$ becomes bigger than $4^{k^2}$ for some $k$.
    
    With this in mind, if there is no $k\in \N$ such that $\text{e}^{M_{n}}\geq 4^{k^2}$, $\text{e}^{M_{i}}<4^{k^2}$ for $i=0,1,..i,n-1$, we just define $\rho_{n+1}(x,t)=\tilde{\rho}_{n}(x,t)$. 
    
    If there is $k\in \N$ such that $\text{e}^{M_{n}}\geq 4^{k^2}$, $\text{e}^{M_{i}}<4^{k^2}$ for $i=0,1,..i,n-1$ (we consider the biggest value if there is more than one $k$ with these properties), then we can apply Lemma \ref{inductiongrowth} with $\epsilon_{1}=\|\tilde{\rho}_{n}\|_{H^2}$, $\epsilon_{2}=\frac{\epsilon}{8}2^{-k}$, $\epsilon_{3}=4^{-n}$, $T=T_{\epsilon}$, $K=d=K_{\epsilon}$, $M=\tilde{M}_{n}$ to obtain $\rho_{n+1}$ fulfilling

  \begin{align*} 
    k_{n+1}(t):&=\de_{x_{1}}u_1[\rho_{n+1}](x=0,t)<0, \; \de_{t}k_{n+1}(t)>-K_{\epsilon},\\
    M_{n+1}:&=-\int_{0}^{T}\de_{1}u_{1}(\rho_{n+1}(x,t))\geq M_{n}-4^{-n}+\frac{\nu}{n+1}c_{M_{n}},\\
    \|\rho_{n+1}(x,t)+x_{2}\|_{C^1}&<K_{\epsilon}, \|\rho_{n+1}(x,t)-\tilde{\rho}_{n}(x,t)\|_{C^1}\leq 4^{-n},\\
    \|\rho_{n+1}(x,t)-\tilde{\rho}_{n}(x,t)\|_{C^{2.5}(\R^2\setminus B_{\tilde{\delta}_{n}})}&\leq 4^{-n},\|\rho_{n+1}(x,t)-\tilde{\rho}_{n}(x,t)\|_{L^{2}(\R^2)}\leq 4^{-n},
    \end{align*}
    $$\delta_{n+1}:=\text{sup}\{\delta:\text{supp}(\rho_{n+1}(x,t))\cap B_{\delta}(0)=\emptyset\}, \text{supp}(\rho_{n+1}-\tilde{\rho}_{n})\cap \text{supp}\tilde{\rho}_{n}=\emptyset$$
    \begin{align}\label{eq:initial-data-eps}
    \|\tilde{\rho}_{n}(x,0)-\rho_{n+1}(x,0)\|_{H^2}\leq \frac{\epsilon}{8}2^{-k},
    \end{align}
    and using \eqref{growth3} we get
    \begin{equation*}
    \|\rho_{n+1}(x,t)\|_{H^2}\geq \epsilon\frac{2^{-k}}{40}\text{e}^{\frac{2M_{n}t}{T}-TtK_{\epsilon}}.
\end{equation*}
Note that, in particular, we always have that
\begin{align*}
    \|\rho_{n+1}(x,t)-\rho_{n}(x,t)\|_{C^{2.5}(\R^2\setminus B_{\delta_{n}})}&\leq \frac{2}{4^{n}},\\
    \|\rho_{n+1}(x,t)-\rho_{n}(x,t)\|_{L^{2}(\R^2)}&\leq \frac{2}{4^{n}},
\end{align*}
and, denoting by $D^{1}$ a generic derivative
$$|D^{1}\rho_{n+1}(x=x_{0},t)-D^{1}\rho_{n}(x=x_{0},t)|\leq \begin{cases}
\frac{2}{4^{n}},\quad   x_{0}\in \R^2\setminus B_{\delta_{n}}\\
\frac{2}{n+1},\quad x_{0}\in B_{\delta_{n}}\setminus B_{\delta_{n+1}}\\
0,\quad  x_{0}\in B_{\delta_{n+1}}
\end{cases}$$
which implies that, for $n_{2}\geq n_{1}$
$$\|\rho_{n_{2}}(x,t)-\rho_{n_{1}}(x,t)\|_{C^{1}}\leq \frac{2}{n_{1}+1}+\sum_{j=n_{1}}^{\infty}\frac{2}{4^{j}} \; $$
and thus it is a Cauchy sequence in $C^1$.
To finish the proof we now show that the limit function $\rho_{\infty}(x,t)=\text{lim}_{n\rightarrow\infty}\rho_{n}(x,t)$ exists and has all the desired properties.
    
    \textbf{Step 1: Unboundedness of $M_{n}$.}
    
    We start by showing that the sequence given by $M_{n}$ tends to infinity. For this we can argue by contradiction: Assume that
    $$\text{lim}_{n\rightarrow\infty}M_{n}=M_{\infty}<\infty.$$
    Note that since $M_{n}+\sum_{j=1}^{n}4^{-j}$ is a monotone increasing sequence then either $M_{\infty}\in \R$ exists or $M_{n}$ tends to infinity. Moreover, we know from Lemma \ref{inductiondeform} that $c_{M_n}>0$ is a constant that depends only on $M_n$ and decreases as $M_n$ increases. 
    Then, we have that
    $$M_{n}\geq M_{0}+\sum_{j=0}^{n-1}\frac{1}{j+1}c_{M_{j}}-\sum_{i=0}^{\infty}4^{-k}\geq c_{M_{\infty}}\sum_{j=1}^{n}\frac{1}{j}-2,$$
    but since by hypothesis $c_{M_{\infty}}$ is bounded from below and the harmonic series is divergent, then we must have $M_{\infty}=\infty$.

   \textbf{Step 2: Convergence of the sequence and upper bounds}
   
   Let us now consider $\rho_{\infty}$, the limit in $C^1$.
   If we consider $n_{2}\geq n_{1}$ fulfilling $\text{e}^{M_{n_{1}}}\geq 4^{k^2}$, using that, for $n\geq j$ 
   $$\text{supp}(\rho_{n+1}(x,0)-\rho_{n}(x,0))\cap\text{supp}(\rho_{j}(x,0))=\emptyset$$
   which implies
    $$\text{supp}(\rho_{n+1}(x,0)-\rho_{n}(x,0))\cap\text{supp}(\rho_{j}(x,0)-\rho_{j-1}(x,0))=\emptyset$$
   we have, from \eqref{eq:initial-data-nu}-\eqref{eq:initial-data-eps}, that
   $$\|\rho_{n_{1}}(x,0)-\rho_{n_{2}}(x,0)\|^2_{H^2}= \sum_{j=n_{1}}^{n_{2}-1}\|\rho_{j+1}(x,0)-\rho_{j}(x,0)\|^2_{H^2}\leq \sum_{n_{1}}^{n_{2}}\frac{\nu^2}{(j+1)^2}+\sum_{i=k}^{\infty}\frac{\epsilon^2}{64}4^{-i}$$
   which tends to $0$ as $n_{1},k$ tend to infinity, and therefore
   $$\text{lim}_{n\rightarrow\infty}\|\rho_{\infty}(x,0)-\rho_{n}(x,0)\|_{H^2}=0,$$
   so in particular, using again the separation of the supports at initial time
   $$\|\rho_{\infty}(x,0)\|^2_{H^2}= \|\rho_{\text{in}}(x,0)\|^2_{H^2}+\sum_{j=0}^{\infty}\|\rho_{j+1}(x,0)-\rho_{j}(x,0)\|^2_{H^2}\leq \frac{\epsilon^2}{16}+\sum_{j=0}^{\infty}\frac{\nu^2}{(j+1)^2}+\sum_{i=1}\frac{\epsilon^2}{64}4^{-i}$$
   so by taking $\nu$ small enough, we get $\|\rho_{\infty}(x,0)\|_{H^2}\leq \epsilon.$
   Finally, note that, for any $n_{2}\geq n_{1}\geq n_{0}$ we have that
   $$\|\rho_{n_{2}}(x,t)-\rho_{n_{1}}(x,t)\|_{C^{2.5}(\R^2\setminus  B_{\delta_{n_{0}}}(0))}\leq \sum_{i=n_{1}}^{\infty}\frac{\epsilon}{8}4^{-i}$$
   and thus
   $$\text{lim}_{n\rightarrow\infty}\|\rho_{\infty}(x,t)-\rho_{n}(x,t)\|_{C^{2.5}(\R^2\setminus B_{\delta_{n_{0}}}(0))}=0.$$

    \textbf{Step 3: existence of the solution $\rho_{\infty}$.}
    
    Using the convergence of $\rho_{n}+x_{2}$ in $C^1$ and $L^2$ we get that
    $$\text{lim}_{n\rightarrow\infty}\|\mathbf{u}[\rho_{\infty}]-\mathbf{u}[\rho_{n}]\|_{C^{\alpha}}=\text{lim}_{n\rightarrow\infty}\|\mathbf{u}[\rho_{\infty}]-\mathbf{u}[\rho_{n}]\|_{C^{\alpha}}=0$$
    for $\alpha<1$.
    In particular, integrating in time and passing to the limit we get
    $$\rho_{\infty}(t_{2})-\rho_{\infty}(t_{1})=-\int_{t_{1}}^{t_{2}}\mathbf{u}[\rho_{\infty}]\cdot\nabla \rho_{\infty}dt.$$
    Furthermore, since we actually have 
    $$\text{lim}_{n\rightarrow\infty}\|\rho_{\infty}(x,t)-\rho_{n}(x,t)\|_{C^{2.5}(\R^2\setminus B_{\delta})}$$
    for any $\delta>0$, this combined with the $L^2$ convergence of $\rho_{n}$ gives us
    $$\text{lim}_{n\rightarrow\infty}\|\mathbf{u}[\rho_{\infty}](x,t)-\mathbf{u}[\rho_{n}](x,t)\|_{C^{2}(\R^2\setminus B_{\delta})}=0.$$
    Note that we actually get better convergence, but it is enough for this proof to obtain it in $C^2$.
    With this, after taking a spatial derivative in the evolution equation, integrating in time and passing to the limit
    $$\partial_{x_{i}}\rho_{\infty}(t_{2})-\partial_{x_{i}}\rho_{\infty}(t_{1})=-\int_{t_{1}}^{t_{2}}\partial_{x_{i}}(\mathbf{u}[\rho_{\infty}]\cdot\nabla \rho_{\infty})\, dt,$$
    for any $x\neq 0$.
    With this we get that, for $x\neq 0$, both $\rho_{\infty}$ and $\partial_{x_{i}}\rho_{\infty}$ are Lipschitz in time, which in particular also gives us continuity in time for $\mathbf{u}[\rho_{\infty}]$, and thus we can pass to the limit $t_{2}\rightarrow t_{1}$ and obtain, for $x\neq 0$

    $$\partial_{t}\rho_{\infty}(x,t)=-\mathbf{u}[\rho_{\infty}](x,t)\cdot\nabla \rho_{\infty}(x,t).$$

    For $x=0$, we just use that $\rho_{\infty}(x=0,t)=\mathbf{u}[\rho_{\infty}](x=0,t)=0$, $\rho_\infty \in C^1$, to get that
    $$\partial_{t}\rho_{\infty}(x=0,t)=0=-\mathbf{u}[\rho_{\infty}](x=0,t)\cdot\nabla \rho_{\infty}(x=0,t),$$
    and thus $\rho_{\infty}$ is a classical solution to IPM.
    
    \textbf{Step 3: Properties of the limit solution.}
   Note that, since we have showed the initial smallness in $H^2$ of $\rho_{\infty}$ plus the fact that it is a solution, it is enough to now show that, for any $t_{0}\in(0,T_{\epsilon})$
   $$\|\rho_{\infty}(x,t_{0})+x_{2}\|_{H^2}=\infty.$$
Given some $k\geq 1$, if we consider $n$  such that $\text{e}^{M_{n}}\geq 4^{k^2}$, $\text{e}^{M_{i}}<4^{k^2}$ for $i=0,1,..i,n-1$, then
    \begin{align*}
        \|\rho_{\infty}+x_{2}\|_{H^2(\R^2\setminus B_{\delta_{n+1}})}&\geq \|\rho_{n+1}+x_{2}\|_{H^2(\R^2\setminus B_{\delta_{n+1}})}-\sum_{j=n+1}^{\infty}\|\rho_{j+1}-\rho_{j}\|_{H^2(\R^2\setminus B_{\delta_{n+1}})}\\
        &\geq \epsilon\frac{2^{-k}}{40}\text{e}^{\frac{2M_{n}t}{T}-Tt}-\sum_{j=n}\frac{2}{4^{n+1}}\geq  \epsilon\frac{4^{\frac{2kt}{T}-\frac12}}{40}\text{e}^{-T^2}-1
    \end{align*}
    so in particular, for any $t_{0}\in (0,T)$, by taking $k$ to infinity we get
    \begin{align*}
        &\|\rho_{\infty}(x,t_{0})+x_{2}\|_{H^2}\geq \text{lim}_{k\rightarrow\infty}c\epsilon4^{\frac{2kt}{T}-\frac12}-1=\infty.
    \end{align*}
\textbf{Step 4: Uniqueness.}
To show uniqueness, we just assume that another solution $\tilde{\rho}_{\infty}$ exists, and defining $W(x,t):=-\rho_{\infty}(x,t)+\tilde{\rho}_{\infty}(x,t)$, subtracting the evolution equations for $\rho_{\infty}$ and $\tilde{\rho}_{\infty}$ we get
$$\partial_{t}W=-\mathbf{u}[W+\rho_{\infty}]\cdot\nabla W-\mathbf{u}[W]\cdot\nabla \rho_{\infty}-u_{2}[W]$$
and thus
$$\partial_{t}\|W\|_{L^2}\leq C(1+\|\rho_{\infty}\|_{C^1})\|W\|_{L^2}$$
and a Grönwall inequality gives us $\|W\|_{L^2}=0$, which finishes the proof.


\section*{Acknowledgment}
RB is partially supported by the Italian Ministry of University and Research, PRIN 2020 entitled ``PDEs, fluid dynamics and transport equation'' and PRIN 2022HSSYPN ``Turbulent Effects vs Stability in Equations from Oceanography'', PNRR Italia Domani, funded
by the European Union under NextGenerationEU, CUP B53D23009300001.
RB warmly thanks Instituto de Ciencias Matem\'aticas, where part of this work was elaborated. This work is supported in part by the Spanish Ministry of Science
and Innovation, through the “Severo Ochoa Programme for Centres of Excellence in R$\&$D (CEX2019-000904-S \& CEX2023-001347-S)” and 152878NB-I00. We were also partially supported by the ERC Advanced Grant 788250, and by the SNF grant FLUTURA: Fluids, Turbulence, Advection No. 212573.
\bibliographystyle{siam}

\bibliography{biblio}

\end{document}